\newtheorem{prop}{Proposition}
\newtheorem{Lemma}{Lemma}
\newtheorem{definition}{Definition}
\newtheorem{theorem}{Theorem}
\newtheorem{remark}{Remark}
\newtheorem{example}{Example}
\newtheorem*{acknow*}{Acknowledgments}
\newcommand{\ignore}[1]{}
\newcommand{\R}{\mathbb{R}}
\newcommand{\N}{\mathbb{N}}
\newcommand{\X}{\mathcal{X}}
\newcommand{\Y}{\mathcal{Y}}
\newcommand{\W}{\mathcal{W}}
\newcommand{\Z}{\mathcal{Z}}
\newcommand{\G}{\mathcal{G}}
\newcommand{\Ro}{\mathcal{R}}
\newcommand{\on}[1]{\operatorname{#1}}
\newcommand{\norm}[1]{\left\lVert #1 \right\rVert}
\newcommand{\abs}[1]{\left\vert #1 \right\rvert}
\newcommand{\E}[1]{\mathbb{E}{\left[ #1\right]}}
\newcommand{\V}[1]{\mathrm{Var}{\left( #1\right)}}
\newcommand{\ceil}[1]{\lceil #1 \rceil}
\newcommand{\floor}[1]{\lfloor #1 \rfloor}
\DeclarePairedDelimiter\autobracket{(}{)}
\newcommand{\brac}[1]{\autobracket*{#1}}
\newcommand{\inner}[1]{\left\langle #1 \right\rangle}
\newcommand{\evalat}[1]{\left. #1 \right\rvert}
\theoremstyle{definition}
\newtheorem{proofcase}{Case}
\theoremstyle{definition}
\newtheorem{proofstep}{Step}
\begin{document}
	\bibliographystyle{amsalpha}
	
	%============================================================================================%
	
	\author{Solesne Bourguin$^{\dagger}$}
	\address{$^{\dagger}$Boston University, Department of Mathematics and Statistics,  111 Cummington Mall, Boston, MA 02215, USA}
	\email{bourguin@math.bu.edu}
	\author{Thanh Dang$^{\dagger}$}
	\email{ycloud77@bu.edu}
	
	\title[Non-stationary
	Gaussian correlated Wishart matrices]{High dimensional regimes of non-stationary
		Gaussian correlated Wishart matrices}
	
	\thanks{S. Bourguin was supported in part by the Simons Foundation
		grant 635136}
	
	\begin{abstract}
		We study the high-dimensional asymptotic regimes of correlated Wishart matrices
		$d^{-1}\Y \Y^T$, where $\Y$ is a $n\times d$ Gaussian random matrix
		with correlated and non-stationary entries. We prove that under
		different normalizations, two distinct regimes emerge
                as both $n$ and $d$ grow to infinity. The first
		regime is the one of central
		convergence, where the law of the properly renormalized Wishart matrices
		becomes close in Wasserstein distance to that of a Gaussian orthogonal ensemble matrix. In the
		second regime, a non-central convergence happens, and the law of the
		normalized Wishart matrices becomes close in Wasserstein distance to that of the so-called
		Rosenblatt-Wishart matrix recently introduced by Nourdin and Zheng. We
		then proceed to show that the convergences stated above also hold in a
		functional setting, namely as weak convergence in
		$C([a,b];M_n(\R))$.
As an application of our main result (in
                the central convergence regime), we show that it can
                be used to prove convergence in expectation of the
                empirical spectral distributions of the Wishart
                matrices to the semicircular law.
                Our findings complement and extend a rich
		collection of results on the study of the fluctuations of Gaussian Wishart
		matrices, and we provide explicit examples based on Gaussian entries
		given by normalized increments of a bi-fractional or a sub-fractional
		Brownian motion. 
	\end{abstract}
	
	\subjclass[2010]{60B20, 60F05, 60H07, 60G22}
	\keywords{Wishart matrices; high-dimensional regime; self-similar Gaussian processes;
		non-stationarity; Malliavin calculus; empirical
                spectral distributions; semicircular distribution; functional limit theorems}
	
	\maketitle
	
	% ============================================================================================%
	\section{Introduction and main results}
	\label{introandmainresults}
	
	\noindent Random matrix theory plays a fundamental role in many areas
	of mathematics, either theoretical ones such as non-commutative algebra,
	combinatorics, geometry or spectral analysis, or applied ones such as
	statistical physics, signal processing or multivariate analysis and
	statistical theory. In the latter, one type of random matrices are
	particularly important as they are used to model, for example, sample
	covariance matrices (see e.g., the surveys \cite{BMN,IJ,RasWi}), which in the era of data driven
analysis have now a growing importance in practice. This type of matrices
	are called Wishart matrices and have been introduced by the
	statistician John Wishart in \cite{Wis}. Given an underlying $n\times
	d$ random matrix $\Y$, the associated Wishart matrix is given by
	$d^{-1}\Y \Y^T$ and is hence a symmetric $n\times n$ random matrix. In
	this paper, we are interested in the asymptotic behaviour of such
	large Wishart matrices as both dimensions grow to infinity. The fact
	that both dimensions are allowed to grow in our context is
	fundamental, especially when one interprets the growth of the
	dimensions as underlying data sets becoming very large over time if
	the Wishart matrices are seen as being sample covariance matrices for
	instance. The case where $n$ is taken to be fixed and only $d$ grows
	to infinity (called the one-dimensional regime) is well understood via standard probabilistic results such
	as the laws of large numbers, but this setting represents nowadays a
	drawback and is not realistic enough in applications when considering how common it
	has become to have increasingly bigger data sets that grow with time
	as one continuously collects new data. The high-dimensional regime,
	i.e., the case where both $n$ and $d$
	grow to infinity, possibly at different paces, is much more difficult to
	apprehend and has triggered many different and complementary lines of
	work in the recent years (see e.g.,
	\cite{BDT,BBN,Bu,BG,Chatter,FK,JL,MarPas,Mik,NZ,RaczRic}).
	\\~\\
	One particular setting
	that has
	received more attention than others is the one where the matrix $\Y$
	has Gaussian entries, which is the most common case to arise in
	applications. More specifically, the case where the Gaussian entries
	are increments of Gaussian processes (such as a Brownian or fractional
	Brownian motion) is of particular interest when considering the
	modeling of systems that evolve in time. When all the entries of $\Y$
	are considered to be independent and coming from a stationary Gaussian
	process, the fluctuations of the associated Wishart matrix have been
	studied in \cite{Bu,BG}. The assumption of full independence of the
	entries has then been relaxed in \cite{NZ} where the authors
        assume either
	row independence, with a possible correlation in each row
        separately, or overall correlation (depending on the setting),
	but keeping the assumption of stationarity of the underlying Gaussian
	process.
	\\~\\
	Our goal in this paper is to study the high-dimensional
	fluctuations of Wishart matrices based on Gaussian non-stationary
	entries with a self-similarity property, and hence relax
	the stationary assumption made in \cite{NZ} to allow for Gaussian
	entries coming from non-stationary Gaussian processes, such as
	bi-fractional or sub-fractional Brownian motions. We use the class of
	processes introduced in \cite{HN} and further studied in \cite{CNN} characterized by the fact that the
	covariance function of the Gaussian processes that are members of this class satisfy hypotheses \textbf{(H.1)} and
	\textbf{(H.2)} stated below (the covariance
	function of the processes is a perturbation of the covariance function
	of a fractional Brownian motion, and includes among other important examples, bi-fractional
	Brownian motion, or sub-fractional Brownian motion).
	\\~\\
	We also address a question that has not
	been studied so far in this context to the best of our knowledge,
	namely that of functional convergence when the Wishart matrices are
	seen as matrix-valued processes. Deriving functional versions of limit
	theorems and convergence results is of utmost interest, especially
	when considering systems that naturally describe phenomenons evolving
	in time, as illustrated by the fast-growing literature on this topic
	(see for instance \cite{CNN,NourNua} for functional limit theorems
	related to the celebrated Breuer-Major theorem, \cite{Ba,bourcamp} for
	a quantitative approach based on Stein's method in Banach spaces,
	among many other references). Our (non-functional) convergence
	results ensuring the convergence in the sense of finite-dimensional
	distributions, we prove that the sequences of Wishart matrices we
	consider are tight in $C([a,b];M_n(\R))$. We consider the indexing
	parameter of such matrix-valued processes to be part of the $d$
	dimension in the form of a dependency of this dimension on it by
	replacing $d$ by $\floor{dx}$, where $x$ is the indexing parameter of
	the matrix process. An applied way of looking at this setting is to
	interpret $x$ as time, and considering that the $d$ dimension grows
	continuously with time, which is a very natural setting in many
	applications (such as financial time series or temperature readings
	for instance).
	\\~\\
As an application of our results, we prove the convergence in expectation
of the empirical spectral distributions of these Wishart matrices to
the semicircular law (in the case where the Wishart matrices exhibit a
central limit behavior), which is a central question in random matrix
theory. Our methodology to derive this result complements the more
classical methods such as the characteristic function approach, the
method of moments or the Stieltjes transform.
\\~\\
	Let us describe the class of processes we will be working
	with. Let $X$ be a member of the class of self-similar processes
	introduced in \cite{HN}. For reference, a stochastic process
        $\left\{ X_s \colon s \geq 0 \right\}$ is called a
        self-similar process with self-similarity parameter $H>0$ if for all $c>0$,
	\begin{align*}
	    \left\{X\brac{cs}\colon s \geq 0 \right\}\stackrel{\text{dist}}{=}\left\{c^H X\brac{s}\colon s \geq 0 \right\},
	\end{align*}
        where $\stackrel{\text{dist}}{=}$ denotes equality in
        distribution. In our case, $\left\{ X_s \colon s \geq 0 \right\}$
	is a centered, self-similar Gaussian process with self-similarity
	parameter $\beta \in (0,1)$. Define $\phi \colon [1,\infty) \to \R$ by
	$\phi(x)=\E{X_1 X_x}$, so that, for $0 < s \leq t$, we have 
	\begin{equation*}
		\E{X_s X_t}=s^{2\beta}\E{X_1 X_{\frac{t}{s}}} = s^{2\beta}\phi \left( \frac{t}{s} \right).
	\end{equation*}
	Hence, $\phi$ characterizes the covariance function of $X$. Moreover,
	the following two assumptions are assumed to hold for all members of
	this class of processes, and were both introduced in \cite{HN}.
	\begin{enumerate}
		\item[\textbf{(H.1)}] There exists $\alpha \in (0,2\beta]$ such that
		$\phi$ has the form 
		\begin{equation*}
			\phi(x) =  -\lambda (x-1)^{\alpha} + \psi(x),
		\end{equation*}
		where $\lambda >0$ and $\psi$ is twice-differentiable on an open set
		containing $[1,\infty)$ and there exists a constant $C \geq 0$ such
		that, for any $x \in (1,\infty)$,
		\begin{itemize}
			\item[(a)] $\abs{\psi'(x)}\leq Cx^{\alpha-1}$
			\item[(b)] $\abs{\psi''(x)}\leq Cx^{-1}(x-1)^{\alpha-1}$
			\item[(c)] $\psi'(1)=\beta \psi(1)$ when $\alpha\geq 1$.
		\end{itemize}
	\end{enumerate}
	\begin{enumerate}
		\item[\textbf{(H.2)}] There are constants $C >0$ and $1 < \nu
		\leq 2$ such that, for all $x \geq 2$,
		\begin{itemize}
			\item[(d)] $\abs{\phi'(x)}= \begin{cases} 
			C(x-1)^{-\nu} & \alpha< 1, \\
			C(x-1)^{\alpha-2} & \alpha\geq 1.
			\end{cases}$
			\item[(e)] $\abs{\phi''(x)}= \begin{cases} 
			C(x-1)^{-\nu-1} & \alpha< 1, \\
			C(x-1)^{\alpha-3} & \alpha\geq 1.
			\end{cases}$
		\end{itemize}
	\end{enumerate}
	The reader is referred to \cite[Section 4]{HN} for worked out examples
	of Gaussian processes satisfying assumptions \textbf{(H.1)} and
	\textbf{(H.2)}, among which, as pointed out earlier, the bi-fractional
	Brownian motion and the sub-fractional Brownian motion.
	\\~\\
	Now, for $k \geq 0$, define 
	\begin{equation*}
		\Delta X_k = X_{k+1}-X_{k}\quad\mbox{and}\quad Y_{k}=\frac{\Delta X_{k}}{\norm{\Delta X_{k}}_{L^2{(\Omega)}}}.
	\end{equation*}
	We are now ready to introduce the Gaussian random matrices $\Y$ our Wishart
	matrices will be built upon. Let $\left\{ X^i \colon i \in \N
	\right\}$ be i.i.d. copies of $X$ and write 
	\begin{equation*}
		\Delta X^i_k=X^i_{k+1}-X^i_{k} \quad\mbox{and}\quad Y^i_{k}=\frac{\Delta X^i_{k}}{\norm{\Delta X^i_{k}}_{L^2{(\Omega)}}}.
	\end{equation*}
	For any $x\in [a,b]$ where $a<b$ are two positive constants, let $\Y$ be a $n\times \floor{dx}$ matrix with
	entries given by $Y^i_k$, $1 \leq i \leq n$, $1 \leq k \leq
	\floor{dx}$. Whenever the parameter $\alpha$ (appearing in
	\textbf{(H.1)}) of the process $X$ is such that $0<\alpha<\frac{3}{2}$, we define the
	Wishart matrix $\W_{n,\floor{dx}}$ to be
	\begin{align*}
		\W_{n,\floor{dx}}=\frac{\floor{dx}}{\sqrt{d}}\brac{\frac{1}{\floor{dx}}\Y\Y^T-I}.
	\end{align*}
	$\W_{n,\floor{dx}}$ is a $n\times n$ matrix with entries given by, for
	any $1 \leq i,j \leq n$, 
	\begin{equation*}
		W_{ij}(\floor{dx})=\frac{1}{\sqrt{d}}\sum_{k=1}^{\floor{dx}}\left(Y^i_kY^j_k -\mathds{1}_{\left\{ i=j \right\}}\right).
	\end{equation*}	
	Whenever $\alpha=\frac{3}{2}$, we define the Wishart matrix
	$\widetilde{\W}_{n,\floor{dx}}$, which differs from
	$\W_{n,\floor{dx}}$ by the normalization of its entries, by 
	\begin{align*}
		\widetilde{\W}_{n,\floor{dx}}=\frac{\floor{dx}}{\sqrt{d}\ln d}\brac{\frac{1}{\floor{dx}}\Y\Y^T-I}.
	\end{align*}
	Finally, whenever $\alpha >\frac{3}{2}$, we define another version of
	the Wishart matrix (with yet another normalization of the entries) by
	\begin{equation}
		\label{defrosmat}
		\widehat{\W}_{n,\floor{dx}}=\frac{\floor{dx}}{d^{\alpha-1}}\brac{\frac{1}{\floor{dx}}\Y\Y^T-I}.
	\end{equation}
	\begin{remark}
		The Wishart matrices introduced above, $\W_{n,\floor{dx}}$,
		$\widetilde{\W}_{n,\floor{dx}}$ and $\widehat{\W}_{n,\floor{dx}}$
		are essentially the same object, only differing by the normalization
		of their entries. The fact that several normalization are required
		depending on the value of the parameter $\alpha$ corresponds to the
		different asymptotic regimes appearing depending on said values.
	\end{remark}
	\begin{remark} We let the parameter $x\in [a,b]$ in order to
          study functional convergence of Wishart matrices as
          $d$ grows to infinity. If functional convergence is not the topic of
          interest for applications, nothing prevents one from taking
          $x =1$ to be fixed and recover a classical $n \times d$
          matrix. The assumption $a>0$ allows us to sidestep the case $x=0$ and ensure $\Y$ of size $n\times \floor{dx}$ is well-defined, as long as $d$ is sufficiently large. 
	\end{remark}
	\noindent From now on, whenever $x$ is considered to be fixed
	(Sections \ref{centralconvergencesection}, \ref{threehalfsection} and
	\ref{rosenblattconvergencesection}), we will drop the $x$ dependency in
	our notation and write, for example, $W_{ij}$ in place of
	$W_{ij}(\floor{dx})$. Moreover, in what follows, $C$ denotes a generic
	positive constant that may vary from line to line.
	\\~\\
	Our first main result establishes central convergence in the case
	where $0<\alpha<\frac{3}{2}$. The notation $d_W$ stands for the
	Wasserstein distance introduced and defined in Section \ref{sectionbackground}. 	
	
	\begin{theorem}
		\label{theoremcentralconvergence}
		Let $0<\alpha<\frac{3}{2}$. Then, the Wishart matrix
		$\W_{n,\floor{dx}}$ is close to the Gaussian Orthogonal Ensemble matrix
		$\Z_n$ (in finite-dimensional distribution) defined to be a $n\times
		n$ symmetric matrix with independent entries such that $Z_{ii}\sim
		N(0,2\sigma^2)$ and $Z_{ij}\sim N(0,\sigma^2)$ for $i\neq j$, where
		$\sigma^2$ is defined in Lemma \ref{varianceLemma}. Furthermore, the
		following quantitative bound holds
		\begin{equation*}
			d_{W}\brac{\W_{n,\floor{dx}},\Z_n}\leq C\brac{n^\frac{3}{2}r(\alpha,\nu)+nd^{2\alpha-3}+nd^{-1}},
		\end{equation*}
		where
		\begin{equation*}
			r(\alpha,\nu)= \begin{cases} 
				d^{\frac{2\alpha-3}{2(9-2\alpha)}} &\text{if } \alpha<1 \mbox{ and } \alpha+\nu<2 \\
				d^{-\frac{1}{2}} &\text{if } \alpha<1 \mbox{ and } \alpha+\nu\geq 2 \\
				d^{-\frac{1}{2}} &\text{if } 1\leq \alpha<\frac{5}{4}\\
				d^{-\frac{1}{2}}(\ln{d})^\frac{3}{2} &\text{if } \alpha=\frac{5}{4}\\
				d^{2\alpha-3} &\text{if } \frac{5}{4}< \alpha<\frac{3}{2}
			\end{cases}.
		\end{equation*}
	\end{theorem}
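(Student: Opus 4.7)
The plan is to exploit the fact that each entry $W_{ij}$ of $\W_{n,\floor{dx}}$ lives in the second Wiener chaos of the isonormal Gaussian process associated to the family $\left\{ X^i \colon i \in \N \right\}$, and then to apply a multidimensional Stein--Malliavin bound for the Wasserstein distance between a vector of second-chaos elements and a Gaussian target. For $i \neq j$ the centered product $Y^i_k Y^j_k$ is a double Wiener--It\^o integral $I_2(f^{ij}_k)$ for an explicit symmetric kernel, while for $i=j$ the quantity $(Y^i_k)^2 - 1$ is a second Hermite polynomial and likewise writes as $I_2(f^{ii}_k)$. Summing over $k$ and normalizing by $\sqrt{d}$ realizes each $W_{ij}$ as $I_2(F^{ij})$. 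A quantitative multivariate CLT for vectors in the second chaos (of Nourdin--Peccati type) then controls $d_W\brac{\W_{n,\floor{dx}}, \Z_n}$ by the discrepancy between the exact and target covariances summed over matrix index pairs, together with a sum of the $L^2$-norms of the $1$-contractions $F^{ij} \otimes_1 F^{kl}$; these contraction terms carry a combinatorial prefactor of order $n^{3/2}$, arising from the joint counting of matrix indices combined with the symmetry of $\Z_n$.

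Next, the covariance matching would be handled via Lemma \ref{varianceLemma} together with the non-stationary structure of $\phi$. Writing each $\cov{W_{ij},W_{kl}}$ as a double sum of squared correlations between normalized increments, one extracts the leading diagonal contribution matching the GOE covariance (namely $2\sigma^2$ on the diagonal and $\sigma^2$ off-diagonal, with vanishing cross-terms), and then estimates the remainder using the bounds on $\psi'$, $\psi''$ in \textbf{(H.1)} and on $\phi'$, $\phi''$ in \textbf{(H.2)}. The error terms $nd^{2\alpha-3}$ and $nd^{-1}$ in the final bound originate from this step: the $d^{2\alpha-3}$ term arises from the tail of the $\phi$-induced correlations when the underlying process has relatively strong memory, and $d^{-1}$ comes from edge effects of finite-sum approximations to the self-similar covariance.

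The main technical difficulty lies in the contraction analysis, which produces the five regimes defining $r(\alpha,\nu)$. Each contraction $F^{ij} \otimes_1 F^{kl}$ reduces to a weighted triple sum of products of normalized increment correlations, and thanks to \textbf{(H.2)} these are bounded by sums of quantities of the form $\abs{\phi'(|k-m|+c)}\,\abs{\phi'(|\ell-m|+c)}$ with decay rates determined by $\nu$ and $\alpha$ via the two cases of \textbf{(H.2)}. The threshold $\alpha = 5/4$ appears as the borderline at which these triple sums switch between convergent and divergent behavior, producing the logarithmic correction at $\alpha=5/4$ and the polynomial rate $d^{2\alpha-3}$ for $\alpha>5/4$. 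The delicate subcase is $\alpha<1$ with $\alpha+\nu<2$, where the $\psi$ perturbation dominates through the slower decay rate $(x-1)^{-\nu}$ and neither Cauchy--Schwarz nor a direct summation bound alone gives a satisfactory rate; optimizing between two competing estimates by splitting the sum at a well-chosen threshold of order $d^\theta$ produces the exponent $\tfrac{2\alpha-3}{2(9-2\alpha)}$. I expect this optimization to be the most subtle step of the argument, with the remaining cases following by more routine but careful applications of \textbf{(H.1)} and \textbf{(H.2)}.
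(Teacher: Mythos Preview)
Your proposal is correct and follows essentially the same route as the paper: a second-chaos representation of the entries combined with the Nourdin--Peccati multivariate bound (Proposition~\ref{tt1}), where the covariance-matching error yields the $nd^{2\alpha-3}+nd^{-1}$ terms and the contraction norms yield $n^{3/2}r(\alpha,\nu)$, the latter handled in the difficult case $\alpha<1$, $\alpha+\nu<2$ by precisely the $d^\theta$-splitting optimization you describe. Two cosmetic points: the contraction norms are quadruple (not triple) sums over column indices, and the paper organizes the argument through an intermediate Gaussian matrix $\G_n$ sharing the exact covariance of $\W_{n,\floor{dx}}$ before comparing $\G_n$ to $\Z_n$, but this is a presentational rather than substantive difference.
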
	
	\begin{remark}
		Since  we have
		$ -\frac{1}{2}< \frac{2\alpha-3}{2(9-2\alpha)}<0$ for
		$\alpha<1$, the above convergence rate in the case where $\alpha<1$ and $\alpha+\nu<2$ is weaker than the rate appearing in the stationary case treated in \cite{NZ}. This comes from the fact that some estimates used in \cite{NZ} are not valid in our increased generality, and other
		arguments are needed, giving rise to this different
		convergence rate. One can hence see our result as being complementary to
		those in \cite{NZ} as their rate is better if the process $X$ is
		stationary, but ours allows to cover the non-stationary case as
		well. It is not surprising that the fact that our result accommodates
		many more processes than the stationary ones comes at the price of a
		slightly less optimal rate. From a technical point of view, it comes
		from the fact that the estimates in Lemma \ref{fbmcomparisonLemma} do
		not hold for all instances of $\alpha$ and $\beta$, and warrants new
		estimates.
	\end{remark}
	\noindent Before going any further, we would like to illustrate the
	results of Theorem \ref{theoremcentralconvergence} on two interesting examples
	of processes that are not stationary (and hence not covered by, for
	instance, \cite{NZ}), namely the bi-fractional Brownian motion and the
	sub-fractional Brownian motion.
	\begin{example}
		\label{remarkcentralconvergence}
		If $X$ is a bi-fractional Brownian motion, i.e., a centered Gaussian
		process with covariance function given by 
		\begin{equation*}
			\E{X_tX_s} = 2^{-K} \left( \left( t^{2H} + s^{2H} \right)^K - \abs{t-s}^{2HK} \right),
		\end{equation*}
		where $H\in (0,1)$, $K\in [a,b]$, then it was derived in
		\cite[Section 4.1]{HN} that $\alpha=2\beta=2HK$ and $\nu=(1+2H-2HK)\wedge (2-2HK)$. Theorem
		\ref{theoremcentralconvergence} then yields
		\[d_{W}\brac{\W_{n,\floor{dx}},\Z_n}\leq  C\begin{cases} 
		n^\frac{3}{2}d^{\frac{4HK-3}{2(9-4HK)}} &\text{if }
		2HK<1 \mbox{ and } 2H< 1 \\
		n^\frac{3}{2}d^{-\frac{1}{2}}&\text{if } 2HK<1 \mbox{ and } 2H>1 \\
		n^\frac{3}{2}d^{-\frac{1}{2}}+nd^{4HK-3} &\text{if } 1\leq 2HK<\frac{5}{4}\\
		n^\frac{3}{2}d^{-\frac{1}{2}}(\ln{d})^\frac{3}{2} &\text{if } 2HK=\frac{5}{4}\\
		n^\frac{3}{2}d^{4HK-3} &\text{if } \frac{5}{4}< 2HK<\frac{3}{2}
		\end{cases}.
		\]
		\\~\\
		\noindent If $X$ is a sub-fractional Brownian motion, i.e., a centered Gaussian
		process with covariance function given by 
		\begin{equation*}
			\E{X_tX_s} = t^{2H} + s^{2H} -\frac{1}{2}\left( (t+s)^{2H} + \abs{t-s}^{2H} \right),
		\end{equation*}
		where $H\in (0,1)$, then it was derived in
		\cite[Section 4.2]{HN} that $\alpha=2\beta=2H$ and $\nu=2-2H$. Theorem
		\ref{theoremcentralconvergence} then yields
		\begin{equation*}
			d_{W}\brac{\W_{n,\floor{dx}},\Z_n}\leq  C\begin{cases} 
				n^\frac{3}{2}d^{-\frac{1}{2}} &\text{if } 0\leq H<\frac{1}{2}\\
				n^\frac{3}{2}d^{-\frac{1}{2}}+ nd^{4H-3} &\text{if } \frac{1}{2}\leq H<\frac{5}{8}\\
				n^\frac{3}{2}d^{-\frac{1}{2}}(\ln{d})^\frac{3}{2} &\text{if } H=\frac{5}{8}\\
				n^\frac{3}{2}d^{4H-3} &\text{if } \frac{5}{8}< H<\frac{3}{4}
			\end{cases}
		\end{equation*}
		which are the same convergence rates obtained in \cite{NZ} for
		fractional Brownian motion.
	\end{example}
	\noindent When $\alpha=\frac{3}{2}$, we still have central
	convergence, but under a different normalization of the entries of the
	Wishart matrix, hence giving rise to a different regime for central
	convergence when compared to the case $\alpha<\frac{3}{2}$.
	\begin{theorem}
		\label{theoremthreehalf}
		Let $\alpha=\frac{3}{2}$. Then, the Wishart matrix
		$\widetilde{\W}_{n,\floor{dx}}$ is close to the Gaussian Orthogonal Ensemble matrix
		$\widetilde{\Z}_n$ (in finite-dimensional distribution) defined to be a $n\times
		n$ symmetric matrix with independent entries such that $Z_{ii}\sim
		N(0,2\rho^2)$ and $Z_{ij}\sim N(0,\rho^2)$ for $i\neq j$, where
		$\rho^2$ is defined in Lemma \ref{varianceLemmathreehalf}. Furthermore, the
		following quantitative bound holds
		\begin{align*}
			d_{W}\brac{\widetilde{\W}_{n,\floor{dx}},\widetilde{\Z}_n}\leq C\frac{n^{3/2}}{\ln d}.
		\end{align*}
	\end{theorem}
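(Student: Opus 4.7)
The plan is to mirror the Malliavin-Stein strategy used for Theorem \ref{theoremcentralconvergence} at the critical exponent $\alpha=\frac{3}{2}$, where the normalization carries an extra $\ln d$ factor. Since $Y_k^i Y_k^j - \mathds{1}_{\{i=j\}}$ is a homogeneous Wiener polynomial of degree two, each entry $\widetilde{W}_{ij}$ lives in the second Wiener chaos of the isonormal Gaussian process associated with the family $\{X^i\colon i\in\N\}$, so we may write $\widetilde{W}_{ij}=I_2(\widetilde{f}_{ij})$ for an explicit symmetric kernel. The multivariate second-chaos Nourdin-Peccati bound then yields
\begin{align*}
d_W\brac{\widetilde{\W}_{n,\floor{dx}},\widetilde{\Z}_n}\leq C\sqrt{\sum_{(i,j),(k,\ell)}\brac{\mathrm{Cov}\brac{\widetilde{W}_{ij},\widetilde{W}_{k\ell}}-\widetilde{\Sigma}_{(ij),(k\ell)}}^2}+C\sum_{i\leq j}\norm{\widetilde{f}_{ij}\otimes_1 \widetilde{f}_{ij}},
\end{align*}
where $\widetilde{\Sigma}$ is the covariance structure of $\widetilde{\Z}_n$.

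The covariance-matching term is controlled by Lemma \ref{varianceLemmathreehalf}, which guarantees $\V{\widetilde{W}_{ii}}\to 2\rho^2$ and $\V{\widetilde{W}_{ij}}\to\rho^2$, together with the observation that cross-covariances between entries built from three or four distinct copies of $X$ vanish exactly thanks to independence of the $X^i$. Hence the only nontrivial bivariate covariances come from distinct unordered pairs sharing one index, and these must be shown to decay to zero at the required rate. The contraction term $\norm{\widetilde{f}_{ij}\otimes_1\widetilde{f}_{ij}}$ is the crux: it expands into a quadruple sum over $k_1,k_2,k_3,k_4\in\{1,\dots,\floor{dx}\}$ of products of covariances of the $Y^i_{k_m}$'s, whose growth in $d$ is dictated by \textbf{(H.1)} and \textbf{(H.2)} specialized to $\alpha=\frac{3}{2}$. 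At this critical exponent, (H.2)(d) gives $\abs{\phi'(x)}\asymp(x-1)^{-1/2}$, so the dyadic summations of the form $\sum_{k,\ell\leq \floor{dx}}\abs{\phi'(\abs{k-\ell}+1)}^{2}$ accumulate to $d\ln d$ rather than the purely linear order seen in the subcritical regime of Theorem \ref{theoremcentralconvergence}.

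After tracking these estimates, the normalization factor $(\sqrt{d}\ln d)^{-1}$ absorbs the square root of the $d(\ln d)^2$ growth from the critical sums and leaves a residual factor of $(\ln d)^{-1}$ per entry in the contraction term. Summing over the $O(n^2)$ entries, taking the square root, and accounting for the $n^{1/2}$ extra factor from the matching of the $n(n+1)/2$-dimensional covariance to the GOE target then produces the advertised $C n^{3/2}/\ln d$ rate.

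The main obstacle will be the precise asymptotic analysis of the critical one- and two-parameter sums of $\phi'$ and $\phi''$ under \textbf{(H.1)} and \textbf{(H.2)} with $\alpha=\frac{3}{2}$: in the subcritical case these quantities were genuinely summable and produced a polynomial rate, whereas here one must extract the sharp constant multiplying the logarithmic divergence, combine it with the leading asymptotic $\phi(x)\sim -\lambda(x-1)^{3/2}$ from (H.1), and argue that the perturbative terms arising from $\psi$ (via bounds (a)-(c)) are strictly of lower order in $\ln d$. Once these sharp log-asymptotics are established, the rest of the argument is a direct transcription of the scheme used for Theorem \ref{theoremcentralconvergence}.
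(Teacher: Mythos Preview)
Your overall strategy---second-chaos representation plus a multivariate Nourdin--Peccati bound---is the paper's approach. The paper splits the comparison into two steps via an intermediate Gaussian $\widetilde{\G}_n$ having the exact covariance of $\widetilde{\W}_{n,\floor{dx}}$ (so step one isolates the contraction contribution and step two is a Gaussian-to-Gaussian covariance comparison), whereas you go directly from $\widetilde{\W}_{n,\floor{dx}}$ to $\widetilde{\Z}_n$; both routes are viable. One small correction: the cross-covariances between half-matrix entries sharing exactly one row index are not merely decaying but vanish identically by row independence (cf.\ Lemma~\ref{covarianceLemmanormal}), so the covariance-mismatch piece reduces to the diagonal variance convergence of Lemma~\ref{varianceLemmathreehalf}.

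There is, however, a genuine gap. The contraction term in the Wasserstein bound involves \emph{all} pairs $\norm{\widetilde{f}_{ij}\otimes_1\widetilde{f}_{k\ell}}^2$, not only the diagonal ones you wrote; the nonvanishing pairs are the $O(n^3)$ with $\{i,j\}\cap\{k,\ell\}\neq\emptyset$ (Lemma~\ref{Lemmacontraction}), and this is what actually produces the factor $n^{3/2}$---your accounting via ``$O(n^2)$ entries plus an extra $n^{1/2}$ from the covariance matching'' does not reproduce it. More importantly, each such squared contraction norm equals, up to constants,
\[
\frac{1}{(d\ln d)^2}\sum_{k,l,m,p=1}^{\floor{dx}}\delta_{kl}\,\delta_{mp}\,\delta_{km}\,\delta_{lp},
\]
and you do not say how to control this \emph{quadruple} sum. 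The double-sum asymptotic $\sum_{k,\ell}\delta_{k\ell}^2\asymp d\ln d$ you describe is the variance calculation, not the contraction. The paper's device is Lemma~\ref{fbmcomparisonLemmathreehalf}: first compare $\abs{\delta_{kl}}\leq C\abs{a_{3/2}(k-l)}$ with $a_{3/2}(m)\sim\tfrac{3}{8}\abs{m}^{-1/2}$, then apply the Young-type inequality of \cite[pp.~134--135]{NP} to obtain
\[
\sum_{k,l,m,p}\abs{a_{3/2}(k-l)a_{3/2}(m-p)a_{3/2}(k-m)a_{3/2}(l-p)}\leq Cd\Bigl(\sum_{\abs{m}<\floor{dx}}\abs{a_{3/2}(m)}^{4/3}\Bigr)^{3}\leq Cd^{2},
\]
since $\abs{a_{3/2}(m)}^{4/3}\asymp\abs{m}^{-2/3}$ sums to $O(d^{1/3})$. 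This gives the bound $C/(\ln d)^{2}$ per squared contraction; without this step, or an equivalent direct estimate of the four-index sum, the argument does not close.
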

	\noindent In the case where $\frac{3}{2}< \alpha<2$, convergence
	still happens, but it is not central anymore. This exhibits another
	asymptotic regime, both in terms of normalization of the entries, as
	well as in terms of the nature of the limit. Before stating our
	result, let us define the limiting object that features in it.
	\begin{definition}
		\label{rosenblattwishartmatrixdef}
		The $n \times n$ Rosenblatt-Wishart matrix is the random symmetric
		matrix $\Ro_n$ with its entries given by $R_{ij} = \lim_{d \to
			\infty} \widehat{W}_{ij}$, for all $1 \leq i,j \leq n$. This limit
		always exists and is well-defined, as ensured by Lemma
		\ref{Lemmal2convergence}, and the entries of $\Ro_n$ are elements of
		the second Wiener chaos associated to $X$.
	\end{definition}
	\noindent The following theorem describes the non-central asymptotic regime
	where the above defined object appears as the limit.
	\begin{theorem}
		\label{theoremrosenblatt}
		Let $\frac{3}{2}<\alpha<2$. Then, the Wishart matrix
		$\widehat{\W}_{n,\floor{dx}}$ is close to the Rosenblatt-Wishart
		matrix $\Ro_n$ (in finite-dimensional distribution) defined in
		Definition \ref{rosenblattwishartmatrixdef}. Furthermore, the
		following quantitative bound holds
		\begin{align*}
			d_{W}\brac{\widehat{\W}_{n,\floor{dx}},\Ro_n}\leq  Cnd^{\frac{3-2\alpha}{2}}.
		\end{align*}
	\end{theorem}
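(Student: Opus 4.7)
The strategy is to reduce the Wasserstein bound to an $L^2$ (Frobenius) estimate on the entry-wise difference, and then to invoke Lemma \ref{Lemmal2convergence}, which by Definition \ref{rosenblattwishartmatrixdef} quantifies the $L^2$-convergence of each $\widehat{W}_{ij}$ to $R_{ij}$.

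Since all of the $\widehat{W}_{ij}$ and all of the $R_{ij}$ live in the second Wiener chaos associated to the common family $\{X^i\}$, one can construct $\widehat{\W}_{n,\floor{dx}}$ and $\Ro_n$ on a single probability space by defining $R_{ij}$ to be the $L^2$-limit of $\widehat{W}_{ij}$ supplied by Lemma \ref{Lemmal2convergence}. The Wasserstein distance is then bounded by the $L^2$-norm of the Frobenius distance:
\begin{align*}
d_W\bigl(\widehat{\W}_{n,\floor{dx}},\Ro_n\bigr)
\leq \E{\bigl\|\widehat{\W}_{n,\floor{dx}} - \Ro_n\bigr\|_F}
\leq \brac{\sum_{i,j=1}^{n} \E{\brac{\widehat{W}_{ij} - R_{ij}}^2}}^{1/2}.
\end{align*}
Thus it suffices to prove the uniform entry-wise bound
\begin{align*}
\E{\brac{\widehat{W}_{ij}-R_{ij}}^2} \leq C\, d^{3-2\alpha}, \qquad 1 \leq i,j \leq n,
\end{align*}
since summing $n^2$ such terms and extracting the square root produces exactly the announced factor $n\, d^{(3-2\alpha)/2}$.

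This uniform entry-wise bound is the content of Lemma \ref{Lemmal2convergence}, and this is where the main obstacle lies. Writing $\widehat{W}_{ij} - R_{ij} = I_2(f^{(d)}_{ij})$ for an explicit symmetric kernel, one is reduced to an $L^2$-estimate on $f^{(d)}_{ij}$. In the regime $\alpha \in (3/2,2)$, hypothesis \textbf{(H.2)}(d) gives $\abs{\phi'(x)} \asymp (x-1)^{\alpha-2}$ with $\alpha-2 \in (-1/2,0)$, so that the covariance of the $Y^i_k$ decays too slowly as $|k-\ell|\to\infty$ for the diagonal sum $d^{1-\alpha}\sum_{k}\brac{(Y^i_k)^2 - 1}$ to satisfy a central limit theorem; instead, it must be shown to converge at rate $d^{(3-2\alpha)/2}$ in $L^2$ to a genuine second-chaos limit of Rosenblatt type. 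The careful balance between the local singularity of $\phi$ near $1$ and its slow decay at infinity, handled through \textbf{(H.1)}--\textbf{(H.2)}, is what produces the correct rate and hence closes the proof of the theorem once the coupling argument above is applied.
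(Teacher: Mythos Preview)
Your overall strategy is correct and matches the paper's: reduce the Wasserstein distance to an $L^2$ bound via coupling on the common probability space, then control $\E{(\widehat{W}_{ij}-R_{ij})^2}$ entry by entry. The reduction step is essentially what the paper does (it cites \cite{Br} for the inequality $d_W(\widehat{\W},\Ro_n)\le \sqrt{2\sum_{i\le j}\E{(\widehat W_{ij}-R_{ij})^2}}$, which is your Frobenius bound restricted to the half-matrix).

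However, there is a genuine gap in your proposal: you assert that the quantitative entry-wise bound $\E{(\widehat W_{ij}-R_{ij})^2}\le C d^{3-2\alpha}$ is ``the content of Lemma \ref{Lemmal2convergence}.'' It is not. Lemma \ref{Lemmal2convergence} only shows that the sequence is Cauchy in $L^2(\Omega)$, i.e.\ that $\E{\widehat W_{ij}(\floor{dx})\widehat W_{ij}(\floor{px})}$ has a finite limit as $d,p\to\infty$. No rate of convergence is extracted there. The rate $d^{3-2\alpha}$ is established separately, in the body of the proof of Theorem \ref{theoremrosenblatt}, by expanding
\[
\E{(\widehat W_{il}-R_{il})^2}=\E{\widehat W_{il}^2}-2\E{\widehat W_{il}R_{il}}+\E{R_{il}^2}
\]
and computing each term via the covariance representation of Lemma \ref{covarianceLemmarosenblatt} together with the mean-value theorem (Lemma \ref{mvtconsequence}). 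The three terms are matched so that their leading parts cancel, and the remainder is split into two pieces $E_1,E_2$ which are bounded using the pointwise estimates $|\partial_{s,t}\E{X_sX_t}|\le C(s\wedge t)^{2\beta-\alpha}(s\vee t)^{\alpha-2}$ (regime $s\wedge t\le (s\vee t)/2$) and $|\partial_{s,t}\E{X_sX_t}|\le C(s\wedge t)^{2\beta-\alpha}|s-t|^{\alpha-2}$ (regime $s\wedge t> (s\vee t)/2$). This is where the exponent $3-2\alpha$ actually appears.

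Your final paragraph gestures at this work (``this is where the main obstacle lies'') but does not carry it out; writing $\widehat W_{ij}-R_{ij}=I_2(f_{ij}^{(d)})$ and invoking \textbf{(H.2)}(d) in words is not enough. To complete the argument you must either reproduce the computation of $E_1,E_2$ from the paper or give an alternative quantitative bound on $\|f_{ij}^{(d)}\|_{\mathfrak H^{\otimes 2}}^2$ with the correct rate.
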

	\begin{example}
		If $X$ is a bi-fractional Brownian motion as defined in Example \ref{remarkcentralconvergence} with $\frac{3}{4}<HK<1$, then $\widetilde{\W}_{n,\floor{dx}}$ converges to a Rosenblatt-Wishart matrix at the rate $nd^{\frac{3-2\alpha}{2}}$. The same limit distribution and convergence rate apply  when $X$ is a sub-fractional Brownian motion with $\frac{3}{4}<H<1$.
		\\~\\		
		Comparing our result to \cite{NZ}, Nourdin and Zheng obtain the same convergence rate when $X$ is a fractional Brownian motion with $\frac{3}{4}<H<1$. Here we note that $\alpha=2\beta=2H$ for a fractional Brownian motion.
	\end{example}
	\noindent As functional limit theorems are taking an increasingly growing
	importance in the literature, it is a natural question to ask whether
	the convergence results we have obtained so far (regarding the finite
	dimensional distributions of the Wishart matrices under consideration)
	can be made functional under potentially additional assumptions. We
	have chosen to explore the case where the second dimension of the
	matrix $\Y$ grows continuously. One could think of the index $x$
	appearing in the second dimension as time for instance and consider
	the case where the second dimension of $\Y$ grows with time (because
	it is continuously being fed new data over time for example). Our
	next result strengthens and complements Theorems \ref{theoremcentralconvergence}, \ref{theoremthreehalf} and
	\ref{theoremrosenblatt} by settling the question of functional convergence in $C([a,b];M_n(\R))$.	
	\begin{theorem}
		\label{theoremfunctionalnonquantitative}
		The convergences stated in Theorems \ref{theoremcentralconvergence}, \ref{theoremthreehalf} and \ref{theoremrosenblatt} hold in $C\brac{[a,b];M_n(\R)}$. 
              \end{theorem}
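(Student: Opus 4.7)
The plan is to combine tightness of each matrix-valued process in $C([a,b]; M_n(\R))$ with the convergence of its finite-dimensional distributions, which is essentially already contained in Theorems \ref{theoremcentralconvergence}, \ref{theoremthreehalf} and \ref{theoremrosenblatt}. Since $M_n(\R)$ is finite-dimensional (for fixed $n$), tightness of a matrix-valued process is equivalent to tightness of each scalar entry process $x \mapsto W_{ij}(\floor{dx})$ in $C([a,b]; \R)$, once these are made continuous in $x$ by linear interpolation across the jump times $\left\{ k/d \colon k \in \N \right\}$. The task therefore naturally splits into two parts: upgrading the marginal convergence supplied by the preceding theorems to joint finite-dimensional convergence, and verifying a Kolmogorov-type moment bound on each entry uniformly in $d$.

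For finite-dimensional distributions, fix times $a \leq x_1 < \cdots < x_m \leq b$. Each entry $W_{ij}(\floor{dx_\ell})$ (and similarly $\widetilde{W}_{ij}$, $\widehat{W}_{ij}$) lies in the second Wiener chaos of $X$, so by the Peccati--Tudor criterion for multivariate normal convergence, joint convergence to the Gaussian Orthogonal Ensemble matrix at all $x_\ell$ reduces to (i) marginal convergence, already established in Theorems \ref{theoremcentralconvergence} and \ref{theoremthreehalf}, and (ii) convergence of the joint covariance structure $\cov{W_{ij}(\floor{dx_\ell}), W_{kl}(\floor{dx_{\ell'}})}$. The latter is a minor extension of the variance identities in Lemmas \ref{varianceLemma} and \ref{varianceLemmathreehalf}: writing out the double sum, one sees that the cross-time covariance at $x_\ell \leq x_{\ell'}$ is, up to a lower-order remainder, the variance at $x_\ell$, and therefore converges. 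In the Rosenblatt regime, Lemma \ref{Lemmal2convergence} directly provides $L^2$-convergence of $\widehat{W}_{ij}(\floor{dx})$ for each $x$, from which joint convergence is immediate by Cauchy--Schwarz.

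For tightness, I would apply Kolmogorov's criterion to each entry process. Because the centered increments $W_{ij}(\floor{dt}) - W_{ij}(\floor{ds})$ remain in the second Wiener chaos, hypercontractivity gives, for every $p \geq 1$,
\begin{equation*}
  \E{\abs{W_{ij}(\floor{dt}) - W_{ij}(\floor{ds})}^{2p}} \leq C_p\, \V{W_{ij}(\floor{dt}) - W_{ij}(\floor{ds})}^{p},
\end{equation*}
so it suffices to control the variance of the increment. Writing this increment as $d^{-1/2}\sum_{k=\floor{ds}+1}^{\floor{dt}}\brac{Y^i_kY^j_k - \mathds{1}_{\left\{ i=j \right\}}}$, its variance is a truncated version of the double sum already analyzed in Lemma \ref{varianceLemma} and yields a bound of order $(t-s)+1/d$ in the $\alpha<\frac{3}{2}$ regime; analogous bounds, with the prescribed normalizations, are obtained in the $\alpha=\frac{3}{2}$ and Rosenblatt cases by truncating the sums appearing in Lemma \ref{varianceLemmathreehalf} and in the proof of Lemma \ref{Lemmal2convergence}. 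Taking $p=2$ then produces a moment bound of order $(t-s)^2$, which meets Kolmogorov's criterion with exponents $\gamma=4$ and $\eta=1$; the residual $1/d$ is harmless since on any sub-interval of length less than $1/d$ the interpolated process is affine, with variance controlled by the endpoint bound.

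The main obstacle is the clean uniform-in-$d$ variance estimate for a general summation window $[\floor{ds}+1,\floor{dt}]$ with the correct $(t-s)$ dependence. The variance lemmas cited above start their sums at $k=1$, so one has to re-examine the covariance sums $\sum \phi(\cdot)^2$ under hypotheses \textbf{(H.1)} and \textbf{(H.2)} to confirm that the relevant bounds are essentially translation-invariant, with boundary contributions absorbed into lower-order terms. The self-similarity of $X$ together with the structural decomposition $\phi(x)=-\lambda(x-1)^{\alpha}+\psi(x)$ makes this mostly bookkeeping, but careful endpoint handling will dictate the final tightness constants, and is what prevents the argument from being entirely routine.
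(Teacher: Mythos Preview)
Your proposal is correct and follows essentially the same route as the paper: finite-dimensional convergence (inherited from Theorems \ref{theoremcentralconvergence}--\ref{theoremrosenblatt}) plus a Kolmogorov-type moment bound on each entry. Two small points of comparison are worth noting. First, where you invoke hypercontractivity to pass from the variance to $L^p$, the paper instead writes the increment as $\delta^2$ of a deterministic kernel and applies Meyer's inequality \eqref{meyerineq}; since the kernel is deterministic, both yield the same bound $\norm{W_{ij}(\floor{dx})-W_{ij}(\floor{dy})}_{L^p}\leq C\norm{f_{ij}(\floor{dx})-f_{ij}(\floor{dy})}_{\mathfrak{H}^{\otimes 2}}$. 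Second, the ``main obstacle'' you flag---translation-invariance of the variance estimate over a window $[\floor{ds}+1,\floor{dt}]$---dissolves more easily than you anticipate: the paper simply uses the entrywise bound $\abs{\delta_{kl}}\leq C\abs{k-l}^{\alpha-2}$ coming from Remark \ref{remarkafterlemma5} (and Lemma \ref{fbmcomparisonLemma}), which depends only on $\abs{k-l}$ and hence is automatically shift-invariant, giving $d^{-1}\sum_{k,l=\floor{dy}}^{\floor{dx}}\delta_{kl}^2\leq C(x-y)$ directly without any careful boundary bookkeeping.
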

              ~\\
\noindent We now present an application of Theorems \ref{theoremcentralconvergence} and \ref{theoremthreehalf} to proving
that the empirical spectral distributions of the Wishart matrices
$\frac{1}{\sqrt{n}}\W_{n,\floor{dx}}$ and $\frac{1}{\sqrt{n}}\widetilde{\W}_{n,\floor{dx}}$ converge in
expectation to the semicircular distribution. Recall that for a $n
\times n$ symmetric matrix $\mathcal{M}_n$, the (normalized) empirical
spectral distribution is defined as 
\begin{equation*}
\mu_{\frac{1}{\sqrt{n}}\mathcal{M}_n} = \frac{1}{n}\sum_{i=1}^n \delta_{\lambda_i(\mathcal{M}_n)/\sqrt{n}},
\end{equation*}
where $\lambda_1(\mathcal{M}_n) \leq \cdots \leq
\lambda_n(\mathcal{M}_n)$ are the (real) eigenvalues of
$\mathcal{M}_n$, counting multiplicity. Recall also that the semicircular distribution
$\nu_t$ with variance $t>0$ is a probability distribution defined by
	\begin{align*}
	\nu_t(dx) = \frac{1}{2\pi t}\sqrt{(4t-x^2)_+}dx.
	\end{align*}
By Wigner's semicircle law, we know that the empirical spectral distribution of the GOE matrices
$\Z_n$ defined in Theorem \ref{theoremcentralconvergence} converges in
expectation to $\nu_{\sigma^2}$ (where $\sigma^2$ is the variance of the entries of the GOE matrices $\Z_n$ from Theorem \ref{theoremcentralconvergence}). The following result is an
application of Theorem \ref{theoremcentralconvergence} and highlights the spectral behavior of the class of Wishart matrices studied in this paper. 
\begin{theorem}
	\label{theorem_app}
		The empirical spectral distribution of $\frac{1}{\sqrt{n}}\W_{n,\floor{dx}}$
                converges in expectation to the semicircular
                distribution $\nu_{\sigma^2}$. In other words, as $n,d
                \to\infty$, it holds that
		\begin{align*}
		\E{\mu_{\frac{1}{\sqrt{n}}\W_{n,\floor{dx}}}}\longrightarrow \nu_{\sigma^2}.
		\end{align*}
              \end{theorem}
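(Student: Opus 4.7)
The plan is to combine Theorem \ref{theoremcentralconvergence} with Wigner's semicircle law via the observation that the matrix-level Wasserstein distance between $\W_{n,\floor{dx}}$ and $\Z_n$ controls, up to a factor of $1/n$, the Wasserstein distance between their expected empirical spectral distributions on $\R$.

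First I would establish the deterministic bound: for any two symmetric $n\times n$ matrices $A,B$,
\begin{equation*}
d_{W}\brac{\mu_{A/\sqrt{n}},\mu_{B/\sqrt{n}}}\leq \frac{\norm{A-B}_F}{n},
\end{equation*}
where $d_W$ on the left is the $1$-Wasserstein distance on $\R$. This is obtained by realizing the left-hand side (via the natural coupling) as $\frac{1}{n\sqrt{n}}\sum_{i=1}^n \abs{\lambda_i(A)-\lambda_i(B)}$ with eigenvalues matched in common order, then invoking the Hoffman-Wielandt inequality $\sum_i(\lambda_i(A)-\lambda_i(B))^2\leq \norm{A-B}_F^2$ together with Cauchy-Schwarz. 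Promoting this pointwise inequality to the random-matrix setting via Kantorovich-Rubinstein duality, the linearity of expectation, and the infimum over couplings defining $d_W$ on matrices, I get
\begin{equation*}
d_{W}\brac{\E{\mu_{\W_{n,\floor{dx}}/\sqrt{n}}},\E{\mu_{\Z_n/\sqrt{n}}}}\leq \frac{1}{n}\,d_{W}\brac{\W_{n,\floor{dx}},\Z_n}.
\end{equation*}

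Next, I would plug Theorem \ref{theoremcentralconvergence} into the right-hand side, yielding the bound $C\brac{n^{1/2}r(\alpha,\nu)+d^{2\alpha-3}+d^{-1}}$, which tends to $0$ in the joint regime $n,d\to\infty$ (with the implicit proviso that $\sqrt{n}\,r(\alpha,\nu)\to 0$). Separately, Wigner's semicircle law, recalled right before the statement of Theorem \ref{theorem_app}, ensures $\E{\mu_{\Z_n/\sqrt{n}}}\to \nu_{\sigma^2}$. A triangle inequality then closes the argument:
\begin{equation*}
d_{W}\brac{\E{\mu_{\W_{n,\floor{dx}}/\sqrt{n}}},\nu_{\sigma^2}}\leq d_{W}\brac{\E{\mu_{\W_{n,\floor{dx}}/\sqrt{n}}},\E{\mu_{\Z_n/\sqrt{n}}}}+d_{W}\brac{\E{\mu_{\Z_n/\sqrt{n}}},\nu_{\sigma^2}}\longrightarrow 0.
\end{equation*}

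The main obstacle is the first step: bridging the matrix-level Wasserstein distance (defined in Section \ref{sectionbackground} on $M_n(\R)$-valued random variables) with the $1$-Wasserstein distance on $\R$ for spectral distributions. This hinges on the combination of Hoffman-Wielandt with the Kantorovich-Rubinstein dual formulation, and one must verify that the matrix-norm convention chosen in Section \ref{sectionbackground} dominates (or at least matches, up to a universal constant) the Frobenius norm appearing in Hoffman-Wielandt; otherwise an equivalence-of-norms factor must be absorbed into $C$. A secondary, essentially bookkeeping, concern is making explicit the joint growth regime of $(n,d)$ needed for $\frac{1}{n}\,d_W(\W_{n,\floor{dx}},\Z_n)$ to vanish, which follows directly from the explicit rate in Theorem \ref{theoremcentralconvergence}.
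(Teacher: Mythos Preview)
Your approach is correct and genuinely different from the paper's. The paper argues via moments: it fixes $k\geq 1$, uses Theorem~\ref{theoremcentralconvergence} plus the continuous mapping theorem to get $\frac{1}{n}\on{Tr}\bigl((\frac{1}{\sqrt{n}}\W_{n,\floor{dx}})^k\bigr)\to \frac{1}{n}\on{Tr}\bigl((\frac{1}{\sqrt{n}}\Z_n)^k\bigr)$ in distribution as $d\to\infty$, then upgrades this to convergence of expectations by invoking hypercontractivity of the second Wiener chaos (Lemma~\ref{varianceLemma} gives uniformly bounded variances, so all moments of the trace are uniformly bounded), and finally sends $n\to\infty$ via Wigner's law. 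Your route bypasses both the moment method and the hypercontractivity argument: the Hoffman--Wielandt/Kantorovich--Rubinstein step directly shows that $A\mapsto \int g\,d\mu_{A/\sqrt{n}}$ is $\frac{1}{n}$-Lipschitz in the Hilbert--Schmidt norm (which is exactly the norm used in Section~\ref{sectionbackground}, so no equivalence constant is needed), and this converts the matrix-level bound of Theorem~\ref{theoremcentralconvergence} into a bound on $d_W\bigl(\E{\mu_{\W/\sqrt{n}}},\E{\mu_{\Z/\sqrt{n}}}\bigr)$. Your argument is shorter and yields an explicit rate on the first leg; the price is that your final triangle inequality is phrased in $d_W$ on $\R$, so you implicitly need Wigner's law for $\Z_n$ in Wasserstein-$1$ (equivalently weak convergence plus uniform integrability of the expected ESD), which is standard but worth a sentence. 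Both proofs ultimately work in the iterated regime $d\to\infty$ then $n\to\infty$; your proviso $\sqrt{n}\,r(\alpha,\nu)\to 0$ is the natural condition if one insists on a genuinely joint limit, and is no more restrictive than what the paper's own bound in Theorem~\ref{theoremcentralconvergence} requires for $\W_{n,\floor{dx}}$ to approach $\Z_n$.
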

              \begin{remark}
We emphasize that the above theorem is just one possible application
of our main results, as it focuses on one particular statistic of the
the Wishart matrices, namely the empirical spectral distribution. The link between the spectral statistics of
Wishart and Wigner matrices, the latter being known as the Gaussian
Orthogonal Ensemble if the entries are Gaussian, has been studied
extensively. Tracy and Widom obtained the limiting distribution of the
largest eigenvalue of Wigner matrices in \cite{Tra,Tra2}, and it is
now known as the Tracy-Widom law. Johnstone \cite{John} and El-Karoui
\cite{Karo} obtained the same limit distribution for largest
eigenvalues of real and complex Wishart matrices under the regime
$\frac{d}{n}\to c\in [0,\infty]$. More recent work on the transition
from Wishart to Wigner matrices in the high dimensional setting and/or
the corresponding transition of spectral statistics such as condition number, extremal eigenvalues and others includes \cite{Bu,BG,CW,JL,RaczRic}. In this context, Theorem \ref{theorem_app} once again demonstrates how similar the spectrum of Wishart and Wigner matrices behave, even if the independence condition between the entries of $\Y$ is relaxed. 
\end{remark}
\begin{proof}[Proof of Theorem \ref{theorem_app}]
We need to prove that for any fixed $k \geq 1$, the $k$-th moment
\begin{equation*}
\frac{1}{n}\E{\on{Tr}\left(\left(\frac{1}{\sqrt{n}}\W_{n,\floor{dx}}\right)^k \right)}
\end{equation*}
converges to the $k$-th moment of the semicircular distribution
$\nu_{\sigma^2}$. By Theorem \ref{theoremcentralconvergence}, we have,
as $d \to \infty$,
\begin{equation*}
\frac{1}{\sqrt{n}}\W_{n,\floor{dx}}  \longrightarrow \frac{1}{\sqrt{n}}\Z_n,
\end{equation*}
where the convergence holds in distribution. By the continuous mapping
theorem, it follows that  
\begin{equation}
  \label{convindistribtogoe}
\frac{1}{n}\on{Tr}\left(\left(\frac{1}{\sqrt{n}}\W_{n,\floor{dx}}\right)^k \right)  \longrightarrow \frac{1}{n}\on{Tr}\left(\left(\frac{1}{\sqrt{n}}\Z_n\right)^k \right),
\end{equation}
as $d \to \infty$, where the convergence holds in distribution, due to
the fact that the map $\mathcal{M}_n \mapsto \frac{1}{n}\on{Tr}\left(\mathcal{M}_n^k
\right)$ is continuous as a multivariate polynomial. Now, the fact that the
sequence $$\left\{
  \frac{1}{n}\on{Tr}\left(\left(\frac{1}{\sqrt{n}}\W_{n,\floor{dx}}\right)^k
  \right) \colon d \geq 1 \right\}$$ is uniformly integrable (by the
hypercontractivity of the Wiener chaos and the fact that the entries
of $\W_{n,\floor{dx}}$ have bounded variances -- see Lemma
\ref{varianceLemma}) together with the convergence in distribution
\eqref{convindistribtogoe} yields
\begin{equation*}
\frac{1}{n}\E{\on{Tr}\left(\left(\frac{1}{\sqrt{n}}\W_{n,\floor{dx}}\right)^k
  \right)} \longrightarrow \frac{1}{n}\E{\on{Tr}\left(\left(\frac{1}{\sqrt{n}}\Z_n\right)^k.
  \right)}
\end{equation*}
Letting $n \to \infty$ and invoking Wigner's semicircle law now yields the desired fact that the $k$-th moment
\begin{equation*}
\frac{1}{n}\E{\on{Tr}\left(\left(\frac{1}{\sqrt{n}}\W_{n,\floor{dx}}\right)^k \right)}
\end{equation*}
converges (as first $d$, then $n$ go to infinity) to the $k$-th moment of the semicircular distribution
$\nu_{\sigma^2}$, which concludes the proof.
\end{proof}
\begin{remark}
Theorem \ref{theorem_app} is stated for the matrices
$\W_{n,\floor{dx}}$, but the same result holds for the matrices
$\widetilde{\W}_{n,\floor{dx}}$ appearing in Theorem \ref{theoremthreehalf} with no modifications
of the proof, other than the limiting semicircular distribution being
$\nu_{\rho^2}$ in this case (and invoking Theorem \ref{theoremthreehalf} instead of Theorem
\ref{theoremcentralconvergence}).
\end{remark}

	\noindent This paper is organized as follows. Section \ref{sectionbackground}
	provides the needed elements of Malliavin calculus, as well as some
	results related to Stein's method. Section
	\ref{centralconvergencesection} is dedicated to the preparation of the
	proof and the proof of Theorem \ref{theoremcentralconvergence},
	Section \ref{threehalfsection} contains the proof of Theorem
	\ref{theoremthreehalf} while Section
	\ref{rosenblattconvergencesection} addresses the proof of Theorem
	\ref{theoremrosenblatt}. The proof of the functional version of our
	results (Theorem \ref{theoremfunctionalnonquantitative}) is given in
	Section \ref{nonquantitativefunctionalsection}. Section
	\ref{sectionLemma} gathers technical and auxiliary results needed
	for the proofs of the main results.

	\section{Preliminaries}
	\label{sectionbackground}
	\subsection{Overview of Malliavin calculus}
	Let $\frak{H}$ be a real separable Hilbert space and $\left\{ Z(h)
	\colon h \in \frak{H} \right\}$ an isonormal Gaussian process
	indexed by it, that is, a centered Gaussian family of random variables
	such that $\E{Z(h)Z(g)} = \left\langle h,g \right\rangle_{\frak{H}}$. Denote by  $I_{n}$ the multiple Wiener (or Wiener-It\^o) stochastic
	integral  of order $n \geq 0$ with respect to
	$Z$ (see \cite[Section 1.1.2]{Nua}). The mapping $I_{n}$ is an
	isometry between the Hilbert space $\frak{H}^{\odot n}$ (symmetric
	tensor product) equipped with the scaled norm
	$\frac{1}{\sqrt{n!}}\norm{\cdot}_{\frak{H}^{\otimes n}}$ and the
	Wiener chaos of order $n$, which is defined as the closed linear span
	of the random variables $$\left\{ H_n(Z(h)) \colon h \in \frak{H},\
	\norm{h}_{\frak{H}}=1 \right\},$$ where $H_{n}$ is the $n$-th Hermite
	polynomial given by $H_{0}=1$ and for $n\geq 1$,
	\begin{equation*}
		H_{n}(x)=\frac{(-1)^{n}}{n!} \exp \left( \frac{x^{2}}{2} \right)
		\frac{d^{n}}{dx^{n}}\left( \exp \left( -\frac{x^{2}}{2}\right)
		\right), \quad x\in \mathbb{R}.
	\end{equation*}
	Multiple Wiener integrals enjoy the following isometry property: for
	any integers $m,n \geq 1$,
	\begin{equation*}
		\E{I_{n}(f) I_{m}(g)} = \mathds{1}_{\left\{ n=m \right\}} n! \langle \tilde{f},\tilde{g}\rangle _{\frak{H}^{\otimes n}},
	\end{equation*}
	where $\tilde{f} $ denotes the symmetrization of $f$ and we recall that $I_{n}(f) = I_{n} ( \tilde{f} )$.
	\\~\\
	Recall the multiplication formula satisfied by multiple Wiener
	integrals: for any $n,m \geq 1$, and any $f\in \frak{H}^{\odot n}$ and
	$g\in \frak{H}^{\odot m}$, it holds that
	\begin{equation}
		\label{prodformula}
		I_n(f)I_m(g)= \sum _{r=0}^{n\wedge m} r! \binom{n}{r}\binom{m}{r} I_{m+n-2r}(f\otimes _r g),
	\end{equation}
	where the $r$-th contraction of $f$ and $g$ is defined by, for $0\leq r\leq m\wedge n$, 
	\begin{equation*}
		f\otimes _r g = \sum_{i_1,\ldots , i_r =1}^{\infty} \left\langle f,
		e_{i_1}\otimes \cdots \otimes e_{i_r}
		\right\rangle_{\frak{H}^{\otimes r}} \otimes \left\langle g,
		e_{i_1}\otimes \cdots \otimes e_{i_r}
		\right\rangle_{\frak{H}^{\otimes r}},
	\end{equation*}
	with $\left\{ e_i \colon i \geq 1 \right\}$ denoting a
	complete orthonormal system in $\frak{H}$. 
	\\~\\
	Recall that any square integrable random variable $F$ which is measurable with respect to the $\sigma$-algebra generated by $Z$ can be expanded into an orthogonal sum of multiple Wiener integrals:
	\begin{equation}
		\label{sum1} F=\sum_{n=0}^\infty I_{n}(f_{n}),
	\end{equation}
	where $f_{n}\in \frak{H}^{\odot n}$ are (uniquely determined)
	symmetric functions and $I_{0}(f_{0})=\mathbb{E}\left(  F\right)$.
	\\~\\
	Let $L$ denote the Ornstein-Uhlenbeck operator, whose action on a
	random variable $F$ with chaos decomposition \eqref{sum1} and such
	that $\sum_{n=1}^{\infty} n^{2}n! \norm{f_n}^2_{\frak{H}^{\otimes n}}<\infty$ is given by
	\begin{equation*}
		LF=-\sum_{n=1}^{\infty} nI_{n}(f_{n}).
	\end{equation*}
	A pseudoinverse $L^{-1}$ can be introduced via spectral calculus as
	follows:
	\begin{equation*}
		L^{-1}F=-\sum_{n=1}^{\infty} \frac{1}{n}I_{n}(f_{n}).
	\end{equation*}
	It follows that
	\begin{equation*}
		LL^{-1}F = F - \E{F}.
	\end{equation*}
	For $p>1$ and $k \in \mathbb{R}$, the Sobolev-Watanabe spaces
	$\mathbb{D}^{k ,p }$ are defined as the closure of
	the set of polynomial random variables with respect to the norm
	\begin{equation*}
		\Vert F\Vert _{k , p} =\Vert (I -L) ^{\frac{k }{2}} F \Vert_{L^{p} (\Omega )},
	\end{equation*}
	where $I$ denotes the identity operator. We denote by $D$ the Malliavin
	derivative that acts on smooth random variables of the form $F=g(Z(h_1),
	\ldots , Z(h_n))$, where $g$ is a smooth function with compact support
	and $h_i \in \frak{H}$, $1 \leq i \leq n$. Its action on such a random
	variable $F$ is given by
	\begin{equation*}
		DF=\sum_{i=1}^{n}\frac{\partial g}{\partial x_{i}}(Z(h_1), \ldots , Z(h_n)) h_{i}.
	\end{equation*}
	The operator $D$ is closable and continuous from $\mathbb{D}^{k ,
		p} $ into $\mathbb{D} ^{k -1, p} \left( \frak{H}\right)$.
	\\~\\
	Denote by $\delta$ the adjoint of $D$, which is known as the
	divergence operator. An element $u\in L^2(\Omega,\mathfrak{H})$
	belongs to $\on{dom}(\delta)$ only if there exists a constant $C_u$
	depending only on $u$ such that
	\begin{align*}
		\abs{\E{\inner{DF,u}_\mathfrak{H}}}\leq C_u\norm{F}_{L^2(\Omega)}
	\end{align*}
	for any $F\in \mathbb{D}^{1,2}$. In this case, we have the
	following integration by parts formula (or duality relation)
	\begin{align*}
		\E{F\delta(u)}=\E{\inner{DF,u}_\mathfrak{H}}.
	\end{align*}
	\begin{remark}
		A random variable $F$ is an element of $\on{dom}L =
		\mathbb{D}^{2,2}$ if and only if $F \in \on{dom}(\delta D)$
		(that is $F \in \mathbb{D}^{1,2}$ and $DF \in
		\on{dom}(\delta)$), and in this case, 
		\begin{equation*}
			LF = -\delta D F.
		\end{equation*}
	\end{remark}
	\noindent An important result from Malliavin calculus we will make use of in the
	sequel are Meyer's inequalities: for any $1\leq p\leq k$ and $u\in
	\mathbb{D}^{k,q}\left(\mathfrak{H}^{\otimes p}\right)$, there exists a constant
	$C >0$ such that 
	\begin{equation}
		\label{meyerineq}
		\norm{\delta^p(u)}_{k-p,q}\leq
		C\norm{u}_{\mathbb{D}^{k,q}\left( \mathfrak{H}^{\otimes p} \right) }.
	\end{equation}
	For a more complete treatment of Meyer's inequalities, we
        refer to \cite[Theorem 2.5.5]{NP} or \cite[Section 1.5]{Nua}, and related results therein.
	
	\subsection{Distances between random matrices}
	\label{secdistances}
	
	We will use the Wasserstein distance between two random matrices
	taking values in $\mathcal{M}_n(\mathbb{R})$, which denotes the space
	of $n \times n$ real matrices. Given two
	$\mathcal{M}_n(\mathbb{R})$-valued random matrices $\mathcal{X}$ and $\mathcal{Y}$, the Wasserstein
	distance between them is given by 
	\begin{equation*}
		d_W \left(\mathcal{X}, \mathcal{Y}  \right) =
		\sup_{\norm{g}_{\on{Lip}}\leq 1} \abs{\E{ g(\mathcal{X})} -\E{ g(\mathcal{Y}} },
	\end{equation*}
	where the Lipschitz norm $\norm{\cdot}_{\on{Lip}}$ of $g \colon \mathcal{M}_{n}(\mathbb{R}) \to \mathbb{R} $ is defined by  
	
	\begin{equation*}
		\norm{g}_{\on{Lip}} = \sup_{A \neq B\in \mathcal{M}_{n}(\mathbb{R})}\frac{ \abs{ g(A)- g(B)} }{\norm{A-B}_{\on{HS}} },
	\end{equation*}
	with $\norm{\cdot}_{\on{HS}}$ denoting the Hilbert-Schmidt norm on $\mathcal{M}_{n}(\mathbb{R})$.
	\\~\\
	\noindent We will also make use of the Wasserstein distance between random
	vectors, defined analogously as in the matrix case. Namely, if $X, Y$
	are two $n$-dimensional random vectors, then the Wasserstein distance
	between them is defined to be
	\begin{equation*}
		d_W \left(X, Y  \right) =
		\sup_{\norm{g}_{\on{Lip}}\leq 1} \abs{\mathbb{E}\left( g(X)
			\right) -\mathbb{E}\left( g(Y) \right)},
	\end{equation*}
	where the Lipschitz norm $\norm{\cdot}_{\on{Lip}}$ of $g \colon \R^n \to \mathbb{R} $ is defined by  
	
	\begin{equation*}
		\norm{g}_{\on{Lip}} = \sup_{x \neq y \in \R^n}\frac{ \abs{ g(x)- g(y)} }{\norm{x-y}_{\R^n} },
	\end{equation*}
	with $\norm{\cdot}_{\R^n}$ denoting the Euclidean norm on $\R^n$.
	\\~\\
	If $\mathcal{X} = \left( X_{ij} \right)_{1 \leq i,j \leq n}$ is an $n
	\times n$ symmetric random matrix, we associate to it its
	``half-matrix'' defined to be the $n(n+1)/2$-dimensional random vector
	\begin{equation}\label{half}
		\mathcal{X}^{\on{half}} = \left( X_{11}, X_{12} \ldots, X_{1n},
		X_{22}, X_{23}, \ldots , X_{2n}, \ldots, X_{nn}\right).
	\end{equation} 
	It turns out that, in the case of two symmetric matrices, the
	Wasserstein distance between said matrices can be bounded from above
	by a constant multiple of the Wasserstein distance between their
	associated half-matrices. More specifically, we have the following
	Lemma (see \cite[Lemma 2.2]{NZ}).
	\begin{Lemma}\label{ll1}
		Let $\X, \Y$ be two symmetric random matrices with values in $\mathcal{M}_{n}(\mathbb{R}).$ Then
		\begin{equation*}
			d _{W}(\X, \Y) \leq \sqrt{2} d_{W} (\X^{\on{half}}, \Y^{\on{half}}),
		\end{equation*}
		where $\X^{\on{half}}, \Y^{\on{half}}$ are the
		associated half-matrices defined in \eqref{half}.
	\end{Lemma}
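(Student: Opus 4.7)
The plan is to exploit the natural linear bijection $\phi\colon \R^{n(n+1)/2} \to \mathcal{M}_n(\R)$ that sends a half-vector to the unique symmetric matrix having it as its half-vector, and then to transfer $1$-Lipschitz test functions from the matrix side to the vector side. The key numerical input is a direct comparison between the two relevant norms: for any symmetric $A = (A_{ij})$,
\begin{equation*}
\|A\|_{\on{HS}}^2 = \sum_{i=1}^n A_{ii}^2 + 2\sum_{1\leq i<j\leq n} A_{ij}^2 \leq 2\|A^{\on{half}}\|_{\R^{n(n+1)/2}}^2,
\end{equation*}
so that $\|A\|_{\on{HS}} \leq \sqrt{2}\,\|A^{\on{half}}\|_{\R^{n(n+1)/2}}$. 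The factor $\sqrt{2}$ arises precisely because each off-diagonal entry contributes twice to the squared Hilbert--Schmidt norm but only once to the squared Euclidean norm of the half-vector.

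Given any $g\colon \mathcal{M}_n(\R) \to \R$ with $\|g\|_{\on{Lip}} \leq 1$ relative to $\|\cdot\|_{\on{HS}}$, I would define $\tilde g\colon \R^{n(n+1)/2} \to \R$ by $\tilde g(v) = g(\phi(v))$. By linearity of $\phi$ and the norm comparison above, $\tilde g$ is $\sqrt{2}$-Lipschitz with respect to the Euclidean norm, so $\tilde g/\sqrt{2}$ is an admissible $1$-Lipschitz test function for the Wasserstein distance between vectors. Since $\X$ and $\Y$ are symmetric, $g(\X) = \tilde g(\X^{\on{half}})$ and $g(\Y) = \tilde g(\Y^{\on{half}})$ almost surely, hence
\begin{equation*}
\abs{\E{g(\X)} - \E{g(\Y)}} = \abs{\E{\tilde g(\X^{\on{half}})} - \E{\tilde g(\Y^{\on{half}})}} \leq \sqrt{2}\,d_W(\X^{\on{half}}, \Y^{\on{half}}).
\end{equation*}
Taking the supremum over all $1$-Lipschitz $g$ on $\mathcal{M}_n(\R)$ yields the claimed bound.

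This lemma is essentially a bookkeeping exercise and I do not anticipate a genuine obstacle; the only point deserving care is tracking where the constant $\sqrt{2}$ originates, which is forced entirely by the doubling of off-diagonal contributions to $\|\cdot\|_{\on{HS}}$. In particular, had one defined the half-vector with $\sqrt{2}$-weighted off-diagonal entries, the map $\phi$ would be an isometry and the constant would be $1$; the stated constant is therefore tight for the specific normalization of $\X^{\on{half}}$ chosen in \eqref{half}.
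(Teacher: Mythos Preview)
Your argument is correct and is exactly the natural one: the norm comparison $\|A\|_{\on{HS}}\leq\sqrt{2}\,\|A^{\on{half}}\|$ for symmetric $A$, followed by pulling back $1$-Lipschitz test functions through the symmetrization map $\phi$, is precisely how this inequality is established. Note, however, that the paper does not supply its own proof of this lemma; it simply quotes the result as \cite[Lemma 2.2]{NZ}, so there is nothing in the paper to compare against beyond the bare statement.
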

	\noindent This shows that assessing the Wasserstein
	distance between symmetric random matrices can be shifted to the
	problem of estimating the Wasserstein distance between associated random
	vectors (see Lemma \ref{ll1}). In our context, a helpful result in
	this direction is \cite[Theorem 6.1.1]{NP}, which we restate here
	for convenience. 
	\begin{prop}[Theorem 6.1.1 in \cite{NP}]
		\label{tt1} 
		Fix $m \geq 2$, and let  $F= (F_{1},\ldots, F_{m}) $ be a centered
		$m$-dimensional random vector with $F_{i}\in \mathbb{D}^{1, 4}$ for
		every $i=1,\ldots,m$. Let $C \in \mathcal{M}_m(\R)$ be a symmetric and
		positive definite matrix, and let $Z\sim N_m(0, C)$. Then,
		\begin{equation*}
			d_{W}(F, Z) \leq  \norm{C^{-1}}_{\on{op}}
			\norm{C}_{\on{op}}^{1/2} \sqrt{ \sum_{i, j=1}^{m} \E{\left(
					C_{ij} - \left\langle DF_{i}, -DL^{-1} F_{j} \right\rangle_{\frak{H}}  \right) ^{2}}},
		\end{equation*}
		where $\norm{\cdot}_{\on{op}}$ denotes the operator norm on $\mathcal{M}_m(\R)$.
	\end{prop}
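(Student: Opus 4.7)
The plan is to follow the standard Stein-Malliavin approach for multivariate normal approximation. I would first introduce the multidimensional Stein equation associated with the target law $N_m(0,C)$: for a Lipschitz test function $g \colon \R^m \to \R$, one looks for $f_g \colon \R^m \to \R$ of class $C^2$ solving
\begin{equation*}
\sum_{i,j=1}^m C_{ij}\,\partial^2_{ij} f_g(x) - \sum_{i=1}^m x_i\,\partial_i f_g(x) = g(x) - \E{g(Z)}.
\end{equation*}
The canonical solution is given by the integrated Ornstein-Uhlenbeck representation $f_g(x) = -\int_0^\infty \left( \E{g(e^{-t}x + \sqrt{1-e^{-2t}}\,Z)} - \E{g(Z)}\right) dt$, and the crucial smoothing estimate is that, when $\|g\|_{\on{Lip}}\leq 1$, the Hessian of $f_g$ satisfies $\sup_{x}\|\on{Hess}\,f_g(x)\|_{\on{HS}} \leq \|C^{-1}\|_{\on{op}}\|C\|_{\on{op}}^{1/2}$. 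This Hessian bound is where the prefactor in the statement comes from; I would simply invoke the standard derivation of this bound (coming from differentiating under the expectation and controlling the resulting Gaussian integration by parts).

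Next, I would plug $F = (F_1,\ldots,F_m)$ into the Stein equation and take expectations, so that
\begin{equation*}
\E{g(F)} - \E{g(Z)} = \sum_{i,j=1}^m C_{ij}\,\E{\partial^2_{ij} f_g(F)} - \sum_{i=1}^m \E{F_i\,\partial_i f_g(F)}.
\end{equation*}
Now I would rewrite each summand in the second term using the Malliavin integration by parts formula. Since $F_i$ is centered and $F_i \in \D^{1,4}$, we have $F_i = LL^{-1}F_i = -\delta D L^{-1}F_i$, and the duality between $D$ and $\delta$ together with the chain rule (applicable because $\partial_i f_g$ is Lipschitz, hence $\partial_i f_g(F) \in \D^{1,2}$) yields
\begin{equation*}
\E{F_i\,\partial_i f_g(F)} = \E{\inner{D(\partial_i f_g(F)),\,-DL^{-1}F_i}_{\frak{H}}} = \sum_{j=1}^m \E{\partial^2_{ij} f_g(F)\,\inner{DF_j,\,-DL^{-1}F_i}_{\frak{H}}}.
\end{equation*}
Substituting this back gives the clean identity
\begin{equation*}
\E{g(F)} - \E{g(Z)} = \sum_{i,j=1}^m \E{\partial^2_{ij} f_g(F)\,\bigl(C_{ij} - \inner{DF_j,\,-DL^{-1}F_i}_{\frak{H}}\bigr)}.
\end{equation*}

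Finally, I would apply the Cauchy-Schwarz inequality (in the probability space and in the index $(i,j)$) to factor out the Hessian bound, obtaining
\begin{equation*}
|\E{g(F)} - \E{g(Z)}| \leq \sup_{x}\|\on{Hess}\,f_g(x)\|_{\on{HS}}\cdot \sqrt{\sum_{i,j=1}^m \E{\bigl(C_{ij} - \inner{DF_i,\,-DL^{-1}F_j}_{\frak{H}}\bigr)^2}},
\end{equation*}
after which taking the supremum over $\|g\|_{\on{Lip}}\leq 1$ gives the stated bound (the symmetry $i \leftrightarrow j$ inside the square is handled by noting that $\E{\inner{DF_i,-DL^{-1}F_j}_\frak{H}} = \E{F_i F_j}$, and one may symmetrize the difference without changing the sum of squares).

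The main obstacle is the Hessian bound on $f_g$ with the sharp dependence $\|C^{-1}\|_{\on{op}}\|C\|_{\on{op}}^{1/2}$; this requires exploiting the change of variables $x = C^{1/2}y$ to reduce to the standard Gaussian case and then carefully integrating the spatial derivatives against $e^{-t}/\sqrt{1-e^{-2t}}$ in the OU representation. The Malliavin manipulations themselves are routine once one knows that $\partial_i f_g$ is Lipschitz (so that the chain rule in $\D^{1,2}$ applies), which in turn is where the hypothesis $F_i \in \D^{1,4}$ is used, via Meyer's inequalities \eqref{meyerineq}, to justify the integration by parts.
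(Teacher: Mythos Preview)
The paper does not give its own proof of this proposition: it is simply quoted as Theorem 6.1.1 of \cite{NP} and used as a black box. Your sketch is the standard Stein--Malliavin argument that underlies that theorem (multidimensional Stein equation with the OU smoothing solution, the Hessian bound $\|C^{-1}\|_{\on{op}}\|C\|_{\on{op}}^{1/2}$, the identity $F_i=-\delta DL^{-1}F_i$ combined with the chain rule, and Cauchy--Schwarz), so there is nothing to compare against here and your outline is correct in substance.

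One minor comment: your closing remark that the hypothesis $F_i\in\D^{1,4}$ is used ``via Meyer's inequalities \eqref{meyerineq}'' to justify the chain rule is not quite the right emphasis. The chain rule for $\partial_i f_g(F)$ only needs $F\in\D^{1,2}$ and $\partial_i f_g$ Lipschitz; the $\D^{1,4}$ assumption is really there to ensure that $\langle DF_j,-DL^{-1}F_i\rangle_{\frak H}$ lies in $L^2(\Omega)$, so that the Cauchy--Schwarz step in probability is legitimate and the right-hand side of the bound is finite.
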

	
	\section{Proofs of main central convergence results}
	\noindent This section is dedicated to the proofs of the main central
	convergence results, namely Theorems \ref{theoremcentralconvergence}
	and \ref{theoremthreehalf}. Throughout this section and for
	the rest of the paper, $f(x)=o(g(x))$ is taken to mean
	\begin{equation*}
		\lim_{x\to\infty}\frac{f(x)}{g(x)}=0.
	\end{equation*}
	
	\subsection{Proof of Theorem \ref{theoremcentralconvergence}}	
	\label{centralconvergencesection}
	We start by embedding the covariance structure of $\left(
	Y_{ik} \right)_{i,k \in \N}$ in a Hilbert space
	$\mathfrak{H}$ such that for a collection of elements
	$\left(e_{ik}\right)_{i,k\in \N}$ in $\mathfrak{H}$,
	$Y_{ik}=Z(e_{ik})$ and
	$\inner{e_{ik},e_{jl}}_\mathfrak{H}=\E{Y_{ik} Y_{jl}}$. Since
	the rows of $\Y_{n,\floor{dx}}$ are independent and each entry of $\Y_{n,\floor{dx}}$ is
	normalized, we have  $\inner{e_{ik},e_{jl}}_\mathfrak{H}=0$
	for $i\neq j$ and $\norm{e_{ik}}_\mathfrak{H}=1$. For notational convenience, we will denote by
	$\delta_{kj}=\inner{e_{ik},e_{ij}}_\mathfrak{H}$ in the sequel. With
	this structure handy, the
	entries of $\W_{n,\floor{dx}}$ can be represented as
	\begin{equation}
		\label{representationofwij}
		W_{ij}=I_2 \brac{f_{ij}}
	\end{equation}
	for any $1 \leq i,j \leq n$, where the kernel $f_{ij}$ is
	defined by 
	\begin{equation}
		\label{definitionoffij}
		f_{ij} = \frac{1}{2\sqrt{d}}\sum_{k=1}^{\floor{dx}}\brac{e_{ik}\otimes e_{jk}+e_{jk}\otimes e_{ik}}.
	\end{equation}
	Let $\G_n$ denote a $n\times n$ Gaussian matrix, having the same
	covariance structure as $\W_{n,\floor{dx}}$. Using the notation
	introduced in \eqref{half}, we write $\mathcal{W}^{\on{half}}$ to
	denote the half-matrix associated to $\W_{n,\floor{dx}}$, that is
	\begin{align*}
		\mathcal{W}^{\on{half}}=(W_{11},\ldots,W_{1n},W_{22},\ldots,W_{2n},\ldots,W_{nn}),
	\end{align*}
	and we define $\mathcal{G}^{\on{half}}$ in a similar way. As pointed
	out in Lemma \ref{ll1}, we have $d_{W}\left(\W_{n,\floor{dx}},\G_n  \right) \leq
	\sqrt{2}d_{W}\left( \mathcal{W}^{\on{half}},\mathcal{G}^{\on{half}} \right) $ since
	the matrices $\W_{n,\floor{dx}}$ and $\G_n$ are symmetric.
	\begin{remark}
		Slightly
		abusing notation, we will continue to write $\W_{n,\floor{dx}}$ and
		$\G_n$ in place of $\mathcal{W}^{\on{half}}$ and $\mathcal{G}^{\on{half}}$,
		respectively.          
	\end{remark}
	\noindent Our goal being to estimate $d_{W}\left( \W_{n,\floor{dx}}, \Z_n
	\right)$, we apply the triangle inequality to get 
	\begin{align}
		\label{triangleinequality}
		d_{W}\left(\W_{n,\floor{dx}}, \Z_n  \right) \leq
		d_{W}\left(\W_{n,\floor{dx}}, \G_n  \right) +d_{W}\left( \G_n, \Z_n \right) 
	\end{align}
	and split the proof into two steps, the first one aiming at estimating
	$d_{W}\left(\W_{n,\floor{dx}}, \G_n  \right)$, and the second one
	dealing with the estimation of $d_{W}\left( \G_n, \Z_n \right) $.
	\\~\\
	\noindent  {\bf Step 1: Estimation of $d_{W}\left(\W_{n,\floor{dx}}, \G_n  \right)$}.
	\\~\\
	According to Lemma \ref{Lemmacontraction}, we can write
	\begin{align*}
		\norm{f_{ij}\widetilde{\otimes}_{1}f_{lk}}^2_{\mathfrak{H}^{\otimes 2}}\leq  \frac{1}{d^2}\sum_{k,l,m,p=1}^{\floor{dx}}\delta_{kl}\delta_{mp}\delta_{km}\delta_{lp}.
	\end{align*}
	Based on Lemma \ref{covarianceLemmanormal}, it holds that       
	\begin{equation*}
		\norm{\mathcal{C}^{-1}}_{\operatorname{op}}\norm{\mathcal{C}}^{\frac{1}{2}}_{\operatorname{op}}
		= \sqrt{\frac{2d}{\sum_{k,l=1}^{\floor{dx}}{\delta_{kl}}^2}},
	\end{equation*}
	where $\mathcal{C}$ denotes the covariance matrix of $\W^{\operatorname{half}}_{n,\floor{dx}}$. Applying Proposition \ref{tt1} together with Lemmas \ref{secondchaosLemma} and \ref{Lemmacontraction}, and observing that the cardinality of the set 
	\begin{align*}\{(i,j,k,l)\in \left\{ 1,\ldots,n \right\}^4 \colon i,j,k,l \mbox{
			are not mutually distinct}\}\end{align*} is bounded by $6n^3$,
	yields 
	\begin{align}
		\label{estimate2}
		d_{W}\brac{\W_{n,\floor{dx}},\G_n} &\leq \sqrt{2}\norm{\mathcal{C}^{-1}}_{\operatorname{op}}\norm{\mathcal{C}}^{\frac{1}{2}}_{\operatorname{op}}\brac{\sum_{ i,j,k,l=1}^{n} \E{\brac{\E{W_{ij}W_{kl}}-\frac{1}{2}\inner{DW_{ij},DW_{kl}}_{\mathfrak{H}}}^2}}^{\frac{1}{2}}\nonumber\\
		&=\sqrt{2}\brac{\frac{2d}{\sum_{k,l=1}^{\floor{dx}}{\delta_{kl}}^2}}^{\frac{1}{2}}\brac{\sum_{i,j,k,l=1}^{n}8\norm{f_{ij}\widetilde{\otimes_{1}}f_{lk}}^2_{\mathfrak{H}^{\otimes 2}}}^{\frac{1}{2}}\nonumber\\
		&\leq C\brac{\frac{d}{\sum_{k,l=1}^{\floor{dx}}{\delta_{kl}}^2}}^{\frac{1}{2}}\brac{\frac{n^3}{d^2}\sum_{k,l,m,p=1}^{\floor{dx}}\delta_{kl}\delta_{mp}\delta_{km}\delta_{lp}}^{\frac{1}{2}}.
	\end{align}
	We now need to estimate the right hand side of \eqref{estimate2}. We
	divide this estimation into two cases, the first of which deals with
	the case where the process $X$ is such that $\alpha <1$ and $\alpha +
	\nu < 2$, and the second one covering all other possibilities.

	\begin{proofcase}[$\alpha<1$ and $\alpha+\nu<2$]
		Lemma \ref{varianceLemma} implies that the first factor on the right
		hand side of \eqref{estimate2} is bounded, so that it is sufficient
		to estimate the second factor. For $\frac{1}{2}<\alpha<\frac{3}{2}$,
		let $\theta$ be a constant in $(0,1)$ and let
		$M_d=\floor{\brac{dx}^\theta}$. Denote by $D_d$ the set of multi-indexes
		$\left\{1,\ldots,\floor{dx}  \right\}^4$ and decompose $D_d$ into
		$D_{1,M_d}\cup D_{2,M_d}$ according to
		\begin{equation*}
			D_{1,M_d}=\{(k,l,m,p)\in D_d \colon \abs{k-l}\leq M_d,\abs{k-m}\leq M_d, \abs{m-p}\leq M_d\}
		\end{equation*}
		and 
		\begin{equation*}
			D_{2,M_d}=D_{3,M_d}\cup D_{4,M_d} \cup D_{5,M_d},
		\end{equation*}
		where
		\begin{equation*}
			\begin{cases}
				D_{3,M_d}&=\{(k,l,m,p)\in D_d \colon \abs{k-l}> M_d\}\\
				D_{4,M_d}&=\{(k,l,m,p)\in D_d \colon \abs{k-m}> M_d\}\\
				D_{5,M_d}&=\{(k,l,m,p)\in D_d \colon \abs{m-p}> M_d\} 
			\end{cases}.
		\end{equation*}
		Note that the cardinality of $D_{1,M_d}$ is bounded by $8dM_d^3$ since
		there are fewer than $d$ choices for $k$ and for each $k$, one has
		$2M_d$ choices for each $l,m,p$. Hence, we can write
		\begin{align*}
			\frac{n^3}{d^2}\sum_{k,l,m,p=1}^{\floor{dx}}\delta_{kl}\delta_{mp}\delta_{km}\delta_{lp}
			& \leq \frac{n^3}{d^2}\sum_{(k,l,m,p)\in D_{1,M_d}}\delta_{kl}\delta_{mp}\delta_{km}\delta_{lp} + \frac{3n^3}{d^2}\sum_{(k,l,m,p)\in D_{3,M_d}}\delta_{kl}\delta_{mp}\delta_{km}\delta_{lp}\\
			& \leq \frac{8n^3M_d^3}{d}+\frac{3n^3}{d^2}\brac{\sum_{(k,l,m,p)\in D_{3,M_d}}\abs{\delta_{kl}\delta_{mp}}^2}^{\frac{1}{2}} \brac{\sum_{(k,l,m,p)\in D_{3,M_d}}\abs{\delta_{km}\delta_{lp}}^2}^{\frac{1}{2}} \\
			& \leq 8n^3M_d^3 d^{-1}+3n^3\brac{d^{-1}\sum_{\substack{1\leq k,l\leq  \floor{dx}\\\abs{k-l}>M_d}}\abs{\delta_{kl}}^2}^{\frac{1}{2}} \brac{d^{-1}\sum_{m,p=1}^{\floor{dx}}\abs{\delta_{mp}}^2}^{\frac{3}{2}}.
		\end{align*}
		As Lemma \ref{varianceLemma} implies that
		$d^{-1}\sum_{m,p=1}^{\floor{dx}}\abs{\delta_{mp}}^2<\infty$, we get
		\begin{align*}
			\frac{n^3}{d^2}\sum_{k,l,m,p=1}^{\floor{dx}}\delta_{kl}\delta_{mp}\delta_{km}\delta_{lp}
			& \leq
			Cn^3M_d^3 d^{-1}+3Cn^3\brac{d^{-1}\sum_{\substack{1\leq k,l\leq  \floor{dx}\\\abs{k-l}>M_d}}\abs{\delta_{kl}}^2}^{\frac{1}{2}}\\
			& \leq Cn^3 d^{3\theta-1}+3Cn^3\brac{d^{-1}\sum_{\substack{1\leq k,l\leq  \floor{dx}\\\abs{k-l}>M_d}}\abs{\delta_{kl}}^2}^{\frac{1}{2}}.
		\end{align*}
		Now we use Remark \ref{remarkafterlemma5} which states that the dominant part of $d^{-1}\sum_{k, l=1}^{\floor{dx}}\delta_{kl}^2$ is  
		\begin{align*}
			\frac{1}{d}\sum_{m=1}^{\floor{dx}-1}\sum_{k=1}^{\floor{dx}-m-1}\brac{\frac{k}{k+m}}^{2\beta-\alpha}\brac{\abs{m+1}^{\alpha}+\abs{m-1}^{\alpha}-2\abs{m}^{\alpha}}^2+\frac{x}{2},
		\end{align*}
		and the fact that \begin{align*}\abs{m+1}^{\alpha}+\abs{m-1}^{\alpha}-2\abs{m}^{\alpha}=
			\frac{1}{2}\alpha\brac{\alpha-1}\abs{m}^{\alpha-2}+o(\abs{m}^{\alpha-2})\end{align*}
		to get
		\begin{align*}
			n^3\brac{d^{-1}\sum_{\substack{1\leq k,l\leq  \floor{dx}\\
						\abs{k-l}>M_d}}\abs{\delta_{kl}}^2}^{\frac{1}{2}} &\leq Cn^3\brac{\sum_{m=\abs{k-l}=M_d}^ {\floor{dx}-1}\brac{\abs{m+1}^{\alpha}+\abs{m-1}^{\alpha}-2\abs{m}^{\alpha}}^2 }^{\frac{1}{2}} \\
			&\leq Cn^3\brac{\sum_{ m=M_d}
				^{d-1}\alpha^2\brac{\alpha-1}^2m^{2(\alpha-2)}}^{\frac{1}{2}}.
		\end{align*}
		This finally gives us
		\begin{equation*}
			n^3\brac{d^{-1}\sum_{\substack{1\leq k,l\leq  \floor{dx}\\
						\abs{k-l}>M_d}}\abs{\delta_{kl}}^2}^{\frac{1}{2}}
			\leq Cn^3\brac{M_d^{2\alpha-3}}^{\frac{1}{2}} \leq Cn^3 d^{\frac{1}{2}\theta(2\alpha-3)}.
		\end{equation*}
		This gives us the convergence rate
		\begin{align*}
			d_{W}\brac{\W_{n,\floor{dx}},\G_n} \leq  C\brac{n^3 d^{-2}\sum_{k,l,m,p=1}^{\floor{dx}}\delta_{kl}\delta_{mp}\delta_{km}\delta_{lp}}^{\frac{1}{2}}
			&\leq   Cn^\frac{3}{2}\brac{d^{\frac{1}{2}(3\theta-1)}+ d^{\frac{1}{4}\theta(2\alpha-3)}}\\
			&= Cn^\frac{3}{2}d^{\frac{1}{2}(3\theta-1)\vee \frac{1}{4}\theta(2\alpha-3)},
		\end{align*}
		which holds for any $\theta\in (0,\frac{1}{3})$. Now,
		observe that for $\theta\in (0,\frac{1}{3})$ and
		$\alpha\in (0,\frac{3}{2})$, the function
		\begin{align*}
			f(\theta)=\frac{1}{2}(3\theta-1)\vee \frac{1}{4}\theta(2\alpha-3)= 	\begin{cases}
				\displaystyle (1/4)\theta(2\alpha-3) & \mbox{if } \theta \leq \frac{2}{9-2\alpha} \\
				\displaystyle (1/2)(3\theta-1) & \mbox{if } \theta > \frac{2}{9-2\alpha}
			\end{cases}
		\end{align*}
		attains its minimum $\frac{2\alpha-3}{2(9-2\alpha)}$ at $\theta=\frac{2}{9-2\alpha}$. This allows us to conclude that
		\begin{align*}
			d_{W}\brac{\W_{n,\floor{dx}},\G_n}\leq Cn^\frac{3}{2}d^{\frac{2\alpha-3}{2(9-2\alpha)}}.
		\end{align*}
		\begin{remark} The above estimate works for all Wishart matrices such
			that $0<\alpha<\frac{3}{2}$ and not just $\alpha<1$ and $\alpha+\nu<2$,
			but it turns out that we can obtain a better estimate for the other
			cases as the rest of the proof will show.
		\end{remark}
	\end{proofcase}
	\begin{proofcase}[$1 \leq \alpha < \frac{3}{2}$ or $\alpha <1$
		and $\alpha+\nu \geq 2$]
		In this second case, we consider a process $X$ that satisfies any other
		assumptions appearing in Theorem \ref{theoremcentralconvergence}
		besides $\alpha<1$ and $\alpha+\nu<2$, and we obtain the same central
		convergence rate as the one appearing in \cite{NZ} (recall that, as
		mentioned in the introduction, the results in \cite{NZ} are only valid
		under the assumption of stationarity of $X$, which we do not impose
		here).
		\\~\\
		The fact that we get the rate $r(\alpha,\nu)$  in the case
		where $1 \leq \alpha < \frac{3}{2}$ or $\alpha <1$
		and $\alpha+\nu \geq 2$ follows directly from
		Lemma \ref{fbmcomparisonLemma}, the bound \eqref{estimate2} and the
		fact that
		\begin{equation*}
			d^{-1}\brac{\sum_{m=-d+1}^{d-1}\abs{a_\alpha (m)}^{\frac{4}{3}}}^3\leq  C\begin{cases} 
				d^{-1} &\text{if } 0<\alpha<\frac{5}{4} \\
				d^{-1}(\ln{d})^3 &\text{if } \alpha=\frac{5}{4}\\
				d^{4\alpha-6} &\text{if } \frac{5}{4}<\alpha<2
			\end{cases}.
		\end{equation*}
	\end{proofcase}
	~\\~\\
	\noindent  {\bf Step 2: Estimation of $d_{W}\left(\G_n, \Z_n
		\right)$}.
	\\~\\	
	This step in concerned with bounding the Wasserstein distance between
	$\G_n$ and $\Z_n$. Applying Proposition \ref{tt1} yields
	\begin{align*}
		d_{W}\brac{\G_n,\Z_n}\leq \sqrt{2}\norm{\mathcal{C_\Z}^{-1}}_{\operatorname{op}}\norm{\mathcal{C_\Z}}^{\frac{1}{2}}_{\operatorname{op}}\brac{\sum_{\substack{1\leq i,j,l,k\leq n\\i\leq j; l\leq k}}\E{\brac{\E{Z_{ij}Z_{lk}}-\inner{DG_{ij},DG_{lk}}_\mathfrak{H}}^2}}^{\frac{1}{2}},
	\end{align*}
	where $C_\Z$ denotes the covariance matrix of $\Z_n$. Recall
	that $Z_{ii}\sim N(0,2\sigma^2)$, that $Z_{ij}\sim
	N(0,\sigma^2)$ for $i\neq j$ and that all the entries of
	$\Z_n$ are independent. Hence, Lemma $\ref{varianceLemma}$
	provides us with the exact values of $\E{Z_{ij}Z_{lk}}$. 
	Lemma $\ref{varianceLemma}$ also implies that $\sigma^2 \leq
	\norm{\mathcal{C_\Z}}_{\operatorname{op}}\leq 2\sigma^2$. Meanwhile,
	$G_{ij}$ and $G_{lk}$ are in the first Wiener chaos associated
	to $X$, so that
	$\inner{DG_{ij},DG_{lk}}_\mathfrak{H}=\E{G_{ij}G_{lk}}$. Thus,
	Lemma \ref{covarianceLemmanormal}, Remark
	\ref{remarkafterlemma5} and the fact that $\G_n$ and
	$\W_{n,\floor{dx}}$ are identically distributed yield
	\begin{align*}
		\E{G_{ii}G_{ii}} =2d^{-1}\sum_{k,j=1}^{\floor{dx}}\delta_{kj}^2
		=
		\frac{1}{d}\sum_{m=1}^{\floor{dx}-1}\sum_{k=1}^{\floor{dx}-m-1}\brac{\frac{k}{k+m}}^{2\beta-\alpha}a^2_\alpha(m)+\frac{x}{2}a^2_\alpha(0)+R_d,
	\end{align*}
	and  for $i\neq l$,
	\begin{align*}       
		\E{G_{il}G_{il}}=d^{-1}\sum_{k,j=1}^{\floor{dx}}\delta_{kj}^2
		= \frac{1}{2d}\sum_{m=1}^{\floor{dx}-1}\sum_{k=1}^{\floor{dx}-m-1}\brac{\frac{k}{k+m}}^{2\beta-\alpha}a^2_\alpha(m)+\frac{x}{4}a^2_\alpha(0)+R_d,
	\end{align*}
	where $R_d=o\brac{d^{2\alpha-3}+d^{-1}}$. 
	\\~\\	
	Recalling that $a^2_\alpha(m)=\frac{1}{4}\brac{\abs{m+1}^{\alpha}+\abs{m-1}^{\alpha}-2\abs{m}^\alpha}^2$,
	we are able to get
	\begin{align}
		\label{estimateGandZ}
		d_{W}\brac{\G_n,\Z_n}& \leq \sqrt{2}\frac{\sqrt{2\sigma^2}}{\sigma^2}\brac{\frac{n(n+1)}{2}\brac{2\sigma^2 -2d^{-1}\sum_{k,j=1}^{\floor{dx}}{\delta_{k{j}}}^2}^{2}}^{\frac{1}{2}}\\
		&\leq C\frac{\sqrt{n(n+1)}}{\sigma}(A_1+A_2-R_d),\nonumber
	\end{align}
	where
	\begin{equation*}
		A_1=\frac{x}{2}\sum_{m\in\mathbb{Z}}a^2_\alpha(m)-\frac{1}{d}\sum_{m=1}^{\floor{dx}-1}\brac{\floor{dx}-m-1}a^2_\alpha(m)-\frac{x}{2}a^2_\alpha(0)
	\end{equation*}
	and
	\begin{equation*}
		A_2=\frac{1}{d}\sum_{m=1}^{\floor{dx}-1}\brac{\floor{dx}-m-1}a^2_\alpha(m)-\frac{1}{d}\sum_{m=1}^{\floor{dx}-1}\sum_{k=1}^{\floor{dx}-m-1}\brac{\frac{k}{k+m}}^{2\beta-\alpha}a^2_\alpha(m).
	\end{equation*}
	As $d$ gets sufficiently large, the term $A_1$ can be bounded by
	\begin{align*}
		A_1&=\sum_{m=1}^{\floor{dx}-1}\frac{m+1}{d}a^2_\alpha(m)+ x\sum_{m=\floor{dx}}^{\infty}a^2_\alpha(m)<Cd^{2\alpha-3}
	\end{align*}
	for $\alpha<\frac{3}{2}$ and $\alpha\neq 1$. Meanwhile, to deal with the term
	$A_2$, we observe that as $x\to\infty$, $\ln{\abs{x}}\leq
	\abs{x}^\zeta$ for any positive value of $\zeta$. We also know
	that $a_\alpha(m)=\frac{1}{2}\alpha(\alpha-1)m^{\alpha-2}+o\brac{m^{\alpha-2}}$. Thus, a Taylor expansion of $(1-x)^p$ for $p>0$ and $0\leq x<1$ gives us
	\begin{align*}
		A_2&= \frac{C}{d}\sum_{m=1}^{\floor{dx}-1}\sum_{k=1}^{\floor{dx}-m-1}\brac{1-\brac{1-\frac{m}{k+m}}^{2\beta-\alpha}}m^{2(\alpha-2)}\\
		&\leq \frac{C}{d}\sum_{m=1}^{\floor{dx}-1}\sum_{k=1}^{\floor{dx}-m-1}\frac{m}{k+m}m^{2(\alpha-2)}\\
		&\leq \frac{C}{d}\sum_{m=1}^{\floor{dx}-1}m^{2\alpha-3}\int_1^{\floor{dx}-m-1}\frac{1}{y+m-1}dy
		\\ &\leq \frac{C}{d}\sum_{m=1}^{\floor{dx}-1}m^{2\alpha-3} \ln{\frac{\floor{dx}}{m}}\\
		&\leq \frac{C}{d}\sum_{m=1}^{\floor{dx}-1}m^{2\alpha-3} \brac{\frac{\floor{dx}}{m}}^\zeta \leq Cd^{\zeta-1}\sum_{m=1}^{\floor{dx}-1}m^{2\alpha-\zeta-3} \leq Cd^{2\alpha-3}.
	\end{align*}
	Earlier we have mentioned that $R_d=o\brac{d^{2\alpha-3}+d^{-1}}$. This allows us to conclude that, for $\alpha<\frac{3}{2}$ and
	$\alpha\neq 1$, we have
	\begin{align*}
		d_{W}\brac{\G_n,\Z_n} \leq C\frac{\sqrt{n(n+1)}}{\sigma}\brac{d^{2\alpha -3}+d^{-1}}
	\end{align*}
	for any $\zeta\in(0,1)$. As for $\alpha=1$, $a_{1}(m) = 0$ if $m\neq0$ and $a_1(0)=1$, so the estimate \eqref{estimateGandZ} becomes
	\begin{align*}
		d_{W}\brac{\G_n,\Z_n}\leq C\brac{x-\frac{\floor{dx}}{d}-\frac{1}{d}}\leq \frac{C}{d}.
	\end{align*}
	Finally the estimate in Theorem \ref{theoremcentralconvergence} follows
	immediately from \eqref{triangleinequality}. To conclude that
	$\W_{n,\floor{dx}}$ is close to $\Z_n$ in the sense of
	finite-dimensional distributions, we refer to \cite[Theorem 6.2.3]{NP}
	which states that for a sequence of vectors of multiple Wiener
	integrals, component-wise convergence to a Gaussian limit implies joint convergence.    
	\subsection{Proof of Theorem \ref{theoremthreehalf}}
	\label{threehalfsection}
	\noindent In the case $\alpha=\frac{3}{2}$, recall that we have to use
	a different normalization for the elements of the Wishart matrix (we
	hence adjust notation accordingly), namely for $i\leq j$,
	\begin{equation*}
		\widetilde{W}_{ij}\brac{\floor{dx}}=\frac{1}{\sqrt{d\ln d}}\sum_{k=1}^{\floor{dx}}\left(Y^i_kY^j_k -\mathds{1}_{\left\{ i=j \right\}}\right).
	\end{equation*}
	Let $\widetilde{\G}_n$ be $n \times n$ Gaussian matrices with the same covariance
	structure as $\widetilde{\W}_{n,\floor{dx}}$, which as before denotes a half-matrix vector. In the same spirit as in
	the proof of Theorem \ref{theoremcentralconvergence}, we will first
	estimate the Wasserstein distance from $\widetilde{\W}_{n,\floor{dx}}$
	to $\widetilde{\G}_n$ in Step 1 and the distance from
	$\widetilde{\G}_n$ to the G.O.E matrix $\widetilde{\Z}_n$ in Step
	2. Theorem \ref{theoremthreehalf} then follows from the triangle inequality
	\begin{align}
		\label{triangleinequalitythreehalf}
		d_{W}(\widetilde{\W}_{n,\floor{dx}},\widetilde{\Z}_n)\leq d_{W}(\widetilde{\W}_{n,\floor{dx}},\widetilde{\G}_n)+d_{W}(\widetilde{\G}_{n},\widetilde{\Z}_n).
	\end{align}
	~\\
	\noindent  {\bf Step 1: Estimation of $d_{W}(\widetilde{\W}_{n,\floor{dx}},\widetilde{\G}_n)$}.
	\\~\\
	We proceed in the same way as in the beginning of the proof of Theorem
	\ref{theoremcentralconvergence}. Denote by $\widetilde{\mathcal{C}}$
	the covariance matrix of $\widetilde{\W}_{n,\floor{dx}}$ and
	$\widetilde{\G}_n$. Use Proposition \ref{tt1} and observe that Lemmas
	\ref{secondchaosLemma} and \ref{Lemmacontraction} still hold for
	$\alpha=\frac{3}{2}$, modulo a change of normalizing factor. More
	precisely, we can write
	\begin{align*}
		d_{W}(\widetilde{\W}_{n,\floor{dx}},\widetilde{\G}_n) &\leq \sqrt{2}\norm{\widetilde{\mathcal{C}}^{-1}}_{\operatorname{op}}\norm{\widetilde{\mathcal{C}}}^{\frac{1}{2}}_{\operatorname{op}}\brac{\sum_{i,j,l,k=1}^n\E{\brac{\E{\widetilde{W}_{ij}\widetilde{W}_{lk}}-\frac{1}{2}\inner{D\widetilde{W}_{ij},D\widetilde{W}_{lk}}_{_\mathfrak{H}}}^2}}^{\frac{1}{2}}\\
		&\leq C\brac{\frac{d\ln{d}}{\sum_{k,l=1}^{\floor{dx}}{\delta_{kl}}^2}}^{\frac{1}{2}}\brac{\frac{n^3}{(d\ln{d})^2}\sum_{k,l,m,p=1}^{\floor{dx}}\delta_{kl}\delta_{mp}\delta_{km}\delta_{lp}}^{\frac{1}{2}},
	\end{align*}
	at which point we can use Lemmas  \ref{fbmcomparisonLemmathreehalf}
	and \ref{varianceLemmathreehalf} to get
	\begin{align*}
		d_{W}(\widetilde{\W}_{n,\floor{dx}},\widetilde{\G}_n)&\leq  C \brac{ \frac{n^3}{(d\ln{d})^2}\sum_{k,l,m,p=1}^{\floor{dx}}\delta_{kl}\delta_{mp}\delta_{km}\delta_{lp}}^{\frac{1}{2}}\\
		&\leq  C \brac{\frac{n^3}{d(\ln{d})^2}
			\brac{\sum_{m=-\floor{dx}+1}^{\floor{dx}}\abs{a_\frac{3}{2}
					(m)}^{\frac{4}{3}}}^3}^{\frac{1}{2}}
	\end{align*}
	We have $a_{\frac{3}{2}}
	(m)=\frac{1}{2}\brac{\abs{m+1}^{\frac{3}{2}}+\abs{m-1}^{\frac{3}{2}}-2\abs{m}^{\frac{3}{2}}}=\frac{3}{8}
	\abs{m}^{-\frac{1}{2}}+o(\abs{m}^{-\frac{1}{2}})$. Combining with the fact
	that 
	\begin{equation*}
		\sum_{m=1}^{\floor{dx}}m^{-2/3}\leq \int_{1}^{\floor{dx}}(y-1)^{-2/3}dy\leq d^{1/3},
	\end{equation*}
	we obtain
	\begin{equation*}
		d_{W}(\widetilde{\W}_{n,\floor{dx}},\widetilde{\G}_n)\leq  C \brac{\frac{n^3}{d(\ln{d})^2}\brac{\sum_{\substack{m=-\floor{dx}+1\\m\neq 0}}^{\floor{dx}}\abs{m}^{-2/3}+1}^3}^{\frac{1}{2}}
		\leq  C \frac{n^{3/2}}{\ln{d}}.
	\end{equation*}
	~\\
	\noindent  {\bf Step 2: Estimation of
		$d_{W}(\widetilde{\G}_{n},\widetilde{\Z}_n)$}.
	Denote by $\mathcal{C}_{\widetilde{\Z}}$ the covariance matrix of
	$\widetilde{\Z}_n$. Proposition \ref{tt1} implies that
	\begin{align*}
		d_{W}(\widetilde{\G}_n,\widetilde{\Z}_n)&\leq \norm{\mathcal{C}_{\widetilde{\Z}}^{-1}}_{\operatorname{op}}\norm{\mathcal{C}_{\widetilde{\Z}}}^{\frac{1}{2}}_{\operatorname{op}}\brac{\sum_{\substack{1\leq i,j,l,k\leq n\\i\leq j;\ l\leq k}}\E{\brac{\E{\widetilde{Z}_{ij}\widetilde{Z}_{lk}}-\inner{D\widetilde{G}_{ij},D\widetilde{G}_{lk}}_\mathfrak{H}}^2}}^{\frac{1}{2}}.
	\end{align*}
	To estimate the above expression, we notice that since $\widetilde{Z}_{ii}\sim N(0,2\rho^2)$, $\widetilde{Z}_{ij}\sim N(0,\rho^2)$ for $i\neq j$ and all of the entries of $\widetilde{\Z}_n$ are independent, Lemma $\ref{varianceLemmathreehalf}$ provides us with the exact value of $\E{Z_{ij}Z_{lk}}$. Lemma $\ref{varianceLemmathreehalf}$ also implies that $\rho^2 \leq
	\norm{\mathcal{C}_{\widetilde{Z}}}_{\operatorname{op}}\leq 2\rho^2$. Meanwhile, $\widetilde{G}_{ij}$ and $\widetilde{G}_{lk}$ are in the first Wiener chaos associated to $X$, so that $\inner{D\widetilde{G}_{ij},D\widetilde{G}_{lk}}_\mathfrak{H}=\E{\widetilde{G}_{ij}\widetilde{G}_{lk}}$. Combined with the fact that $\widetilde{\G}_n$ and $\widetilde{\W}_{n,\floor{dx}}$ are identically distributed, again an application of Lemma \ref{varianceLemmathreehalf} yields
	\begin{equation*}
		\E{\widetilde{G}_{ii}\widetilde{G}_{ii}}=2\frac{1}{d\ln
			d}\sum_{k,j=1}^{\floor{dx}}\delta_{kj}^2 =
		\frac{1}{d\ln
			d}\sum_{m=1}^{\floor{dx}-1}\sum_{k=1}^{\floor{dx}-m-1}\brac{\frac{k}{k+m}}^{2\beta-\frac{3}{2}}a_\frac{3}{2}(m)^2+\widetilde{R}_d
	\end{equation*}
	and for $i\neq l$,
	\begin{equation*}
		\E{\widetilde{G}_{il}\widetilde{G}_{il}}=\frac{1}{d\ln d}\sum_{k,j=1}^{\floor{dx}}\delta_{kj}^2
		= \frac{1}{2d\ln d}\sum_{m=1}^{\floor{dx}-1}\sum_{k=1}^{\floor{dx}-m-1}\brac{\frac{k}{k+m}}^{2\beta-\frac{3}{2}}a_\frac{3}{2}(m)^2+\widetilde{R}_d,
	\end{equation*}
	with $\widetilde{R}_d=o(\frac{1}{\ln d})$, which can be deduced from the proof of Lemma \ref{varianceLemmathreehalf} (similarly to what was done in Step $2$ in the proof of Theorem \ref{theoremcentralconvergence}). Now keep in mind that $a_\frac{3}{2}(m)$ is defined in Lemma \ref{fbmcomparisonLemmathreehalf} and $a_\frac{3}{2}(m)^2= \frac{9}{64}m^{-1}+o(m^{-1})$. This yields
	\begin{align}
		\label{estimateGandZthreehalf}
		d_{W}\brac{\widetilde{\G}_n,\widetilde{\Z}_n}& \leq \sqrt{2}\frac{\sqrt{2\rho^2}}{\rho^2}\brac{\frac{n(n+1)}{2}\brac{2\rho^2 -2d^{-1}\sum_{k,j=1}^{\floor{dx}}{\delta_{k{j}}}^2}^{2}}^{\frac{1}{2}}\\
		&\leq C\frac{\sqrt{n(n+1)}}{\rho}\brac{A_3+A_4-\widetilde{R}_d}\nonumber,
	\end{align}
	where
	\begin{equation*}
		A_3=x-\frac{1}{d\ln
			d}\sum_{m=1}^{\floor{dx}-1}\brac{\floor{dx}-m-1}m^{-1}
	\end{equation*}
	and
	\begin{equation*}          
		A_4=\frac{1}{d\ln d}\sum_{m=1}^{\floor{dx}-1}\brac{\floor{dx}-m-1}m^{-1}-\frac{1}{d\ln d}\sum_{m=1}^{\floor{dx}-1}\sum_{k=1}^{\floor{dx}-m-1}\brac{\frac{k}{k+m}}^{2\beta-\alpha}m^{-1}.
	\end{equation*}
	For the term $A_3$, asymptotically, we can write
	\begin{align*}
		A_3&\leq C\brac{x-\frac{1}{d\ln d}\int_{1}^{\floor{dx}-1}\frac{\floor{dx}-y-1}{y}dy}\\
		&=C\brac{x-\frac{1}{d\ln d}\int_{1}^{\floor{dx}-1}\left(\frac{\floor{dx}}{y}-1-\frac{1}{y}\right)dy}\\
		&=C\brac{x-\frac{\floor{dx}\ln{\brac{\floor{dx}-1}}}{d\ln
				d}+\frac{1}{d\ln d}+\frac{\ln{\brac{\floor{dx}-1}}}{d\ln
				d}}\\
		&=C\left(x\brac{1-\frac{\ln{\brac{\floor{dx}-1}}}{\ln d}}-\brac{x-\frac{\floor{dx}}{d}}\frac{\ln{\brac{\floor{dx}-1}}}{\ln d} +\frac{1}{d\ln
			d}+\frac{\ln{\brac{\floor{dx}-1}}}{d\ln d}\right)\\
		& \leq \frac{C}{\ln d}.
	\end{align*}
	For the term $A_4$, observe that as $x\to\infty$, $\ln{\abs{x}}\leq
	\abs{x}^\zeta$ for any positive value of $\zeta$. Thus, a Taylor expansion of $(1-x)^p$ for $p>0$ and $0\leq x<1$ yields
	\begin{align*}
		A_4&= \frac{C}{d\ln d}\sum_{m=1}^{\floor{dx}-1}\sum_{k=1}^{\floor{dx}-m-1}\brac{1-\brac{1-\frac{m}{k+m}}^{2\beta-\alpha}}m^{-1}\\
		&\leq \frac{C}{d\ln d}\sum_{m=1}^{\floor{dx}-1}\sum_{k=1}^{\floor{dx}-m-1}\frac{m}{k+m}m^{-1}\\
		&\leq \frac{C}{d\ln d}\sum_{m=1}^{\floor{dx}-1}\int_1^{\floor{dx}-m-1}\frac{1}{y+m-1}dy
		\\ &= \frac{C}{d\ln d}\sum_{m=1}^{\floor{dx}-1} \ln{\frac{\floor{dx}}{m}}\\
		&\leq \frac{C}{d\ln d}\sum_{m=1}^{\floor{dx}-1} \brac{\frac{\floor{dx}}{m}}^\zeta \leq \frac{Cd^\zeta}{d\ln d}\sum_{m=1}^{\floor{dx}-1}\brac{\frac{1}{m}}^\zeta \leq \frac{C}{\ln d}.
	\end{align*}
	Since $\widetilde{R}_d=o(\frac{1}{\ln d})$, the estimate \eqref{estimateGandZthreehalf} then becomes
	\begin{equation*}
		d_{W}\brac{\widetilde{\G}_n,\widetilde{\Z}_n} \leq C\frac{\sqrt{n(n+1)}}{\ln d},
	\end{equation*}
	and the estimate in Theorem \ref{theoremthreehalf} follows
	from \eqref{triangleinequalitythreehalf}. Like for the proof of Theorem \ref{theoremcentralconvergence}, the conclusion that $\widetilde{\W}_{n,\floor{dx}}$ is close to the
	G.O.E matrix $\widetilde{\Z}_n$ in finite-dimensional distributions
	follows once again from \cite[Theorem 6.2.3]{NP}.

	\section{Proof of the main non-central convergence result}
	\noindent As pointed out in Theorem \ref{theoremrosenblatt}, the case where $\frac{3}{2}\leq
	\alpha<2$ unveils a interesting phenomenon of non-central
	convergence, giving rise to a limiting object known as the Rosenblatt-Wishart
	matrix, introduced in \cite{NZ}. This section is dedicated to the
	proof of this non-central convergence result.
	\label{rosenblattconvergencesection}
	\subsection{Proof of Theorem \ref{theoremrosenblatt}}
	\label{proofoftheorosenblatt}
	The self-similarity property of $X$ implies that the entries 
	\begin{equation*}
		\widehat{Y}^i_{k}=\frac{X^i_{\frac{k+1}{d}}-X^i_{\frac{k}{d}}}{\norm{X^i_{\frac{k+1}{d}}-X^i_{\frac{k}{d}}}_{L^2{(\Omega)}}}
	\end{equation*}
	of the $n \times d$ matrix $\widehat{\Y}$ are equal in distribution to
	the entries $Y_k^i$ of $\Y$. Given that the statement we aim to prove
	is distributional, we can work with (keeping the same denomination by a
	slight abuse of notation) the matrix 
	\begin{equation*}
		\widehat{\W}_{n,\floor{dx}}=\frac{\floor{dx}}{d^{\alpha-1}}\brac{\frac{1}{\floor{dx}}\Y\Y^T-I}
	\end{equation*}
	with entries
	\begin{equation*}
		\widehat{W}_{ij}\brac{\floor{dx}}=d^{1-\alpha}\sum_{k=1}^{\floor{dx}}\left(\widehat{Y}^i_k\widehat{Y}^j_k -\mathds{1}_{\left\{ i=j \right\}}\right)
	\end{equation*}
	in place of the original one given by \eqref{defrosmat}. The existence
	of the limit (in the $L^2 \left( \Omega \right)$ sense) of $\widehat{\W}_{n,\floor{dx}}$ as $d$ goes to infinity, called the
	Rosenblatt-Wishart matrix $\Ro_n$, is ensured by Lemma
	\ref{Lemmal2convergence}. To estimate the Wasserstein distance between our Wishart
	matrix and the Rosenblatt-Wishart matrix, we need the following result from
	\cite{Br} applied to the half-matrices associated with
	$\widehat{\W}_{n,\floor{dx}}$ and $\Ro_n$, which in our context, reads
	\begin{align}
		\label{estimateWassdistboundbyL2norm}
		d_{W}\brac{\widehat{W}_{n,\floor{dx}},\Ro_n}&\leq \sqrt{2\sum_{1\leq i\leq j\leq n}\E{(\widehat{W}_{ij}\brac{\floor{dx}}-R_{ij})^2}}.
	\end{align}
	We hence need to evaluate, for all $1 \leq i \leq l \leq n$,
	\begin{align}
		\label{expandsquare}
		\E{\brac{\widehat{W}_{il}\brac{\floor{dx}}-R_{il}}^2}= \E{\widehat{W}_{il}\brac{\floor{dx}}^2}-2\E{\widehat{W}_{il}\brac{\floor{dx}}R_{il}}+\E{R_{il}^2}.
	\end{align}
	\cite[Lemma 3.2]{HN} states that
	$\E{\left(X^i_{\frac{k+1}{d}}-X^i_{\frac{k}{d}}\right)^2}=2\lambda \brac{\frac{k}{d}}^{2\beta-\alpha}\brac{\frac{1}{d}}^{\alpha}(1+\eta_{k,d})$
	where $\eta_{k,d}=o(k^{\alpha-2})$. This fact combined with Lemma
	\ref{covarianceLemmarosenblatt} allows us to estimate the first term
	on the right hand side of \eqref{expandsquare} as
	\begin{align*}
		\E{\widehat{W}_{il}\brac{\floor{dx}}^2} &= \frac{1}{4\lambda^2}\sum_{1\leq k,j\leq  \floor{dx}}d^{2(1-\alpha)}\brac{\brac{\frac{k}{d}}^{2\beta-\alpha}\brac{\frac{1}{d}}^{\alpha}(1+\eta_{k,d})}^{-1}\\
		&\qquad\qquad \brac{\brac{\frac{j}{d}}^{2\beta-\alpha}\brac{\frac{1}{d}}^{\alpha}(1+\eta_{j,d})}^{-1}\brac{\int_{\frac{j}{d}}^{\frac{j+1}{d}}\int_{\frac{k}{d}}^{\frac{k+1}{d}}\partial_{s,t}\E{X_s X_t}dsdt}^2\\
		&= \frac{1}{4\lambda^2}\sum_{1\leq k,j\leq
			\floor{dx}}d^{2-2(\alpha-2\beta)}\bigg((kj)^{\alpha-2\beta}+o\brac{(kj)^{\alpha-2\beta}k^{\alpha-2}}\\
		&\quad +o\brac{(kj)^{\alpha-2\beta}j^{\alpha-2}}\bigg)\brac{\int_{\frac{j}{d}}^{\frac{j+1}{d}}\int_{\frac{k}{d}}^{\frac{k+1}{d}}\partial_{s,t}\E{X_s X_t}dsdt}^2,
	\end{align*}
	for which we have used
	$\brac{1+\eta_{k,d}}^{-1}=1+o(k^{\alpha-2})$. Next, we use the
	substitution
	$s=\frac{u+k}{d}$, $t = \frac{v+j}{d}$ and apply the mean value theorem to get
	\begin{align*}
		\E{\widehat{W}_{il}\brac{\floor{dx}}^2} =\frac{d^{2-2(\alpha-2\beta)}}{4\lambda^2}\sum_{1\leq k,j\leq  \floor{dx}} \bigg((kj)^{\alpha-2\beta}+o\brac{(kj)^{\alpha-2\beta}k^{\alpha-2}}&\\+o\brac{(kj)^{\alpha-2\beta}j^{\alpha-2}}\bigg)\evalat{\partial_{u,v}\E{X_{\frac{u+k}{d}} X_{\frac{v+j}{d}}}^2}_{\substack{u=u_0\\v=v_0}},
	\end{align*}
	with $u_0,v_0\in [a,b]$. For the second term on the right hand side of
	\eqref{expandsquare}, an application of Lemmas
	\ref{covarianceLemmarosenblatt} and \ref{mvtconsequence} yields
	\begin{align*}
		\E{\widehat{W}_{il}\brac{\floor{dx}}R_{il}} &= \lim_{p\to \infty}\E{\widehat{W}_{il}\brac{\floor{dx}}\widehat{W}_{il}(p)}\\
		&= \frac{1}{4\lambda^2}\sum_{1\leq k\leq  \floor{dx}}d^{1-\alpha}\int_0^1\brac{\brac{\frac{k}{d}}^{2\beta-\alpha}\brac{\frac{1}{d}}^{\alpha}(1+\eta_{k,d})}^{-1}\\
		&\qquad\qquad \qquad\qquad \qquad\qquad \qquad\qquad t^{\alpha-2\beta}\brac{\int_{\frac{k}{d}}^{\frac{k+1}{d}}\partial_{s,t}\E{X_s X_t}ds}^2dt\\
		&= \frac{1}{4\lambda^2}\sum_{1\leq k,j\leq  \floor{dx}}d^{1-\alpha}\int_{\frac{j}{d}}^{\frac{j+1}{d}}\brac{\brac{\frac{k}{d}}^{2\beta-\alpha}\brac{\frac{1}{d}}^{\alpha}(1+\eta_{k,d})}^{-1}\\
		&\qquad\qquad \qquad\qquad \qquad\qquad \qquad\qquad t^{\alpha-2\beta}\brac{\int_{\frac{k}{d}}^{\frac{k+1}{d}}\partial_{s,t}\E{X_s X_t}ds}^2dt\\
		&=\frac{d^{2-2(\alpha-2\beta)}}{4\lambda^2}\sum_{1\leq k,j\leq
			\floor{dx}}\int_0^1
		\brac{k^{\alpha-2\beta}+k^{(\alpha-2\beta)+(\alpha-2)}}\\
		&\qquad
		\qquad\qquad
		\qquad\qquad (v+j)^{\alpha-2\beta}\brac{\int_{0}^{1}\partial_{u,v}\E{X_{\frac{u+k}{d}} X_{\frac{v+j}{d}}}du}^2dv\\
		&=\frac{d^{2-2(\alpha-2\beta)}}{4\lambda^2}\sum_{1\leq k,j\leq
			\floor{dx}}
		\brac{k^{\alpha-2\beta}+k^{(\alpha-2\beta)+(\alpha-2)}}\\
		&\qquad\qquad
		\qquad\qquad
		\qquad\qquad (v_1+j)^{\alpha-2\beta}\evalat{\partial_{u,v}\E{X_{\frac{u+k}{d}} X_{\frac{v+j}{d}}}^2}_{\substack{u=u_1\\v=v_1}},
	\end{align*}
	where $u_1, v_1\in [a,b]$. To compute the last term of the right hand
	side of \eqref{expandsquare}, we use Lemma
	\ref{covarianceLemmarosenblatt} once more, which allows us to write
	\begin{align*}
		\E{R_{ij}^2} &= \lim_{d\to \infty}\E{\widehat{W}_{il}\brac{\floor{dx}}^2}\\
		&=\frac{1}{4\lambda^2}\int_0^1 \int_0^1 (st)^{\alpha-2\beta}\brac{\partial_{s,t}\E{X_s X_t}}^2dsdt\\
		&=\frac{1}{4\lambda^2}\sum_{1\leq k,j\leq  \floor{dx}}\int_{\frac{k}{d}}^{\frac{k+1}{d}} \int_{\frac{j}{d}}^{\frac{j+1}{d}} (st)^{\alpha-2\beta}\brac{\partial_{s,t}\E{X_s X_t}}^2dsdt\\
		&=\frac{d^{2-2(\alpha-2\beta)}}{4\lambda^2}\sum_{1\leq k,j\leq  \floor{dx}}\int_0^1 \int_0^1 (u+k)^{\alpha-2\beta}(v+j)^{\alpha-2\beta}\brac{\partial_{u,v}\E{X_{\frac{u+k}{d}} X_{\frac{v+j}{d}}}}^2dudv\\
		&=\frac{d^{2-2(\alpha-2\beta)}}{4\lambda^2}\sum_{1\leq k,j\leq  \floor{dx}} (u_2+k)^{\alpha-2\beta}(v_2+j)^{\alpha-2\beta}\evalat{\brac{\partial_{u,v}\E{X_{\frac{u+k}{d}} X_{\frac{v+j}{d}}}}^2}_{\substack{u=u_2\\v=v_2}},
	\end{align*}
	where $u_2, v_2\in [a,b]$. Therefore, combining these estimates, we get
	\begin{align*}
		\E{\brac{\widehat{W}_{il}\brac{\floor{dx}}-R_{il}}^2} &\leq E_1+E_2,
	\end{align*}
	where
	\begin{align*}
		E_1&=C d^{2-2(\alpha-2\beta)}\sum_{1\leq k,j\leq  \floor{dx}}
		\sup_{u_3,v_3\in [a,b]}\evalat{\partial_{u,v}\E{X_{\frac{u+k}{d}} X_{\frac{v+j}{d}}}^2}_{\substack{u=u_3\\v=v_3}}\\
		&\qquad
		\qquad\qquad
		\qquad\qquad \brac{(kj)^{\alpha-2\beta}
			+(u_2+k)^{\alpha-2\beta}(v_2+j)^{\alpha-2\beta}-2k^{\alpha-2\beta}(v_1+j)^{\alpha-2\beta}}
	\end{align*}
	and
	\begin{align*}
		E_2&=C d^{2-2(\alpha-2\beta)}\sum_{1\leq k,j\leq
			\floor{dx}}
		(kj)^{\alpha-2\beta}\bigg(\evalat{\partial_{u,v}\E{X_{\frac{u+k}{d}}
				X_{\frac{v+j}{d}}}^2}_{\substack{u=u_0\\v=v_0}}\\ &
		\qquad
		\qquad\qquad
		\qquad\qquad+\evalat{\partial_{u,v}\E{X_{\frac{u+k}{d}} X_{\frac{v+j}{d}}}^2}_{\substack{u=u_2\\v=v_2}}-2\evalat{\partial_{u,v}\E{X_{\frac{u+k}{d}} X_{\frac{v+j}{d}}}^2}_{\substack{u=u_1\\v=v_1}}\bigg).
	\end{align*}
	For $E_1$ we can use the estimate on $\partial_{s,t}\E{X_s X_t}$ in
	Lemma \ref{covarianceLemmarosenblatt} and the symmetry of $j\leq k$
	and $j\geq k$ to get 
	\begin{align*}
		E_1& \leq C d^{2-2(\alpha-2\beta)}\sum_{\substack{1\leq j\leq k\leq  \floor{dx}\\\frac{j}{k}\leq \frac{1}{2}}}
		j^{2(2\beta-\alpha)}k^{2(\alpha-2)}d^{-4\beta}\brac{k^{\alpha-2\beta}j^{\alpha-2\beta-1}}\\
		&\quad +C d^{2-2(\alpha-2\beta)}\sum_{\substack{1\leq j< k\leq  \floor{dx}\\\frac{1}{2}<\frac{j}{k}}}
		j^{2(2\beta-\alpha)}\abs{k-j}^{2(\alpha-2)}d^{-4\beta}\brac{k^{\alpha-2\beta}j^{\alpha-2\beta-1}}\\
		&\leq C d^{2-2\alpha}\sum_{k=1}^{\floor{dx}}\sum_{j=1}^{\floor{\frac{k}{2}}}
		k^{2(\alpha-2)}j^{-1}+C d^{2-2\alpha}\sum_{j=1}^{\floor{dx}-1}\sum_{k=j+1}^{2j}\abs{k-j}^{2(\alpha-2)}j^{-1}\\
		&\leq C d^{2-2\alpha}\sum_{k=1}^{\floor{dx}}k^{2(\alpha-2)}\ln k
		+d^{2-2\alpha}\sum_{j=1}^{\floor{dx}-1}j^{2(\alpha-2)}.
	\end{align*}
	As $k\to\infty$, it holds that $\ln k\leq k^\theta$ for any $\theta \in (0,1)$,  so that
	\begin{equation*}
		E_1 \leq Cd^{2-2\alpha}\sum_{k=1}^{\floor{dx}}k^{2(\alpha-2)+\theta}  \leq Cd^{\theta-1},
	\end{equation*}
	at which point we can take the infimum of this estimate over
	all $\theta \in (0,1)$ to get $ E_1\leq \frac{C}{d}$. For
	$E_2$, in a similar fashion, we apply Lemma
	\ref{covarianceLemmarosenblatt} to obtain $E_2\leq
	Cd^{3-2\alpha}$. Keeping in mind that $\frac{3}{2}<\alpha<2$
	is equivalent to $-1<3-2\alpha<0$,
	\eqref{estimateWassdistboundbyL2norm} implies that
	\begin{equation}
		\label{estimaterosenblatt}
		d_{W}\brac{\widehat{W}_{n,\floor{dx}},\Ro_n}\leq Cnd^{\frac{3-2\alpha}{2}}.
	\end{equation}
	Now to conclude that $\widehat{W}_{n,\floor{dx}}$ and $\Ro_n$ are
	close with respect to finite-dimensional distributions, we need a
	slightly different version of
	\eqref{estimateWassdistboundbyL2norm}, which can for example
	be found in \cite{Br}. Let $(x_1,\ldots,x_p)$ be a
	finite sequence in $[a,b]^{p}$, then we have 
	\begin{align*}
		d_{W}\brac{\brac{\widehat{W}_{n,\floor{dx_1}},\ldots,\widehat{W}_{n,\floor{dx_p}}}, \brac{\Ro_n,\ldots,\Ro_n}}\leq &\brac{\sum_{\substack{1\leq i\leq j\leq n\\ 1\leq l\leq p}}\E{\brac{\widehat{W}_{ij}(\floor{dx_l})-R_{ij}}^2}}^{\frac{1}{2}},
	\end{align*}
	for which the same estimate as in \eqref{estimaterosenblatt} clearly holds. 
	
	\section{Proof of the main functional convergence result}
	\noindent This section is dedicated to proving the main functional
	convergence result, Theorem \ref{theoremfunctionalnonquantitative}, which provides a functional
	counterpart to Theorems \ref{theoremcentralconvergence}, \ref{theoremthreehalf} and \ref{theoremrosenblatt}.
	\subsection{Proof of theorem \ref{theoremfunctionalnonquantitative}}
	\label{nonquantitativefunctionalsection}
	Our goal here is to show that our sequence of Wishart matrices
	converges in $C([a,b];M_n(\R))$ (without providing an estimate on the
	convergence rate). Recall from the introduction that $a<b$ are positive constants. Joint convergence of a vector in $C([a,b];M_n(\R))$
	is equivalent to marginal convergence of each component in
	$C([a,b];\R)$ per \cite[Theorem 26.23]{Da}, so that
	we only have to prove functional convergence of each entry of our
	matrix. Furthermore, it is a well-known fact that the condition 
	\begin{align*}
		\norm{W_{ij}\brac{\floor{dx}}-W_{ij}(\floor{dy})}_{L^p(\Omega)}\leq C_p\abs{x-y}^{\frac{1}{2}}
	\end{align*}
	for some $p>2$ and any $x,y\in [a,b]$, combined with convergence of
	finite dimensional distributions is sufficient in order to guarantee
	tightness in $C([a,b];\R)$. Convergence in the sense of finite
	dimensional distributions has already been shown in Theorems
	\ref{theoremcentralconvergence}, \ref{theoremthreehalf} and
	\ref{theoremrosenblatt}, so that we only need to verify the above
	condition ensuring tightness. In what follows, we reuse the notation
	and terminology introduced at the beginning of the proof of Theorem
	\ref{theoremcentralconvergence} in Subsection \ref{centralconvergencesection}.
	\\~\\
	\noindent  {\bf Case 1 ($\alpha<3/2$).} Note that by
        definition of multiple Wiener integrals (see
        \cite[Definition 2.7.1]{NP}), 
\begin{equation*}
I_2\brac{e_{ik}\otimes e_{jk}}
        = I_2\brac{e_{ik}\widetilde{\otimes} e_{jk}} =
        \frac{1}{2}\delta^2(e_{ik}\otimes e_{jk} +e_{jk}\otimes
        e_{ik}).
\end{equation*} 
Now, assume $y\leq x$ (without loss of generality) for $x,y\in [a,b]$. Apply the above transform
	together with Meyer's inequality \eqref{meyerineq} to get
	\begin{align*}
		\norm{W_{ij}\brac{\floor{dx}}-W_{ij}(\floor{dy})}_{L^p(\Omega)}& = \norm{\delta^2\brac{\frac{1}{2}\frac{1}{\sqrt{d}}\sum_{k=\floor{dy}}^{\floor{dx}}(e_{ik}\otimes e_{jk} +e_{jk}\otimes e_{ik})}}_{L^p(\Omega)}\\
		&\leq C\sum_{m=0}^2 \norm{D^m\brac{\frac{1}{2}\frac{1}{\sqrt{d}}\sum_{k=\floor{dy}}^{\floor{dx}}(e_{ik}\otimes e_{jk} +e_{jk}\otimes e_{ik})}}_{L^p(\Omega;\mathfrak{H}^{\otimes m+2})}\\
		&=C\norm{\frac{1}{2}\frac{1}{\sqrt{d}}\sum_{k=\floor{dy}}^{\floor{dx}}(e_{ik}\otimes e_{jk} +e_{jk}\otimes e_{ik})}_{\mathfrak{H}^{\otimes 2}}\\
		&=
		C\brac{d^{-1}\sum_{k=\floor{dy}}^{\floor{dx}}\delta_{kl}^2}^{\frac{1}{2}}.
	\end{align*}
	By Remark \ref{remarkafterlemma5} and the fact that $a_\alpha(k-l)\leq C\abs{k-l}^{\alpha-2}$ as $\abs{k-l}\to\infty$, we have
	\begin{align*}  
		\norm{W_{ij}\brac{\floor{dx}}-W_{ij}(\floor{dy})}_{L^p(\Omega)}&\leq C\brac{d^{-1}\sum_{k,l=\floor{dy}}^{\floor{dx}}\brac{\frac{k\wedge l}{k\vee l}}^{2\beta-\alpha}\abs{k-l}^{2(\alpha-2)}}^{\frac{1}{2}}\\ &\leq C\brac{d^{-1}\sum_{k,l=\floor{dy}}^{\floor{dx}}\abs{k-l}^{2(\alpha-2)}}^{\frac{1}{2}}\\
		&= C\brac{d^{-1}\sum_{m=k-l=1}^{\floor{dx}-\floor{dy}}\brac{\floor{dx}-\floor{dy}-m} m^{2(\alpha-2)}}^{\frac{1}{2}}\\
		&\leq C\brac{\sum_{m\in \mathbb{Z}}
			\abs{m}^{2(\alpha-2)}}^{\frac{1}{2}}(x-y)^{\frac{1}{2}} \leq C(x-y)^{\frac{1}{2}},
	\end{align*}
	which is the desired estimate.
	\\~\\
	\noindent  {\bf Case 2 ($\alpha=3/2$).} The same procedure as in the
	previous case gives
	\begin{align*}
		\norm{\widetilde{W}_{ij}\brac{\floor{dx}}-\widetilde{W}_{ij}(\floor{dy})}_{L^p(\Omega)}
		&\leq C\brac{(d\ln d)^{-1}\sum_{k,l=\floor{dy}}^{\floor{dx}}\abs{k-l}^{-1}}^{\frac{1}{2}}\\
		&= C\brac{(d\ln d)^{-1}\sum_{m=1}^{\floor{dx}-\floor{dy}}\brac{\floor{dx}-\floor{dy}-m} m^{-1}}^{\frac{1}{2}}\\
		&\leq C\brac{(\ln d)^{-1}\sum_{m=1}^{\floor{dx}-\floor{dy}} \abs{m}^{-1}}^{\frac{1}{2}}(x-y)^{\frac{1}{2}}
		\leq C(x-y)^{\frac{1}{2}},
	\end{align*}
	which gives the desired result.
	\\~\\
	\noindent  {\bf Case 3 ($\alpha>3/2$).} For this final case, the above
	argument yields
	\begin{align*}
		\norm{\widehat{W}_{ij}\brac{\floor{dx}}-\widehat{W}_{ij}(\floor{dy})}_{L^p(\Omega)}&\leq C\brac{d^{2-2\alpha}\sum_{k,l=\floor{dy}}^{\floor{dx}}\abs{k-l}^{2(\alpha-2)}}^{\frac{1}{2}}\\
		&= C\brac{d^{2-2\alpha}\sum_{m=1}^{\floor{dx}-\floor{dy}}\brac{\floor{dx}-\floor{dy}-m} m^{2(\alpha-2)}}^{\frac{1}{2}}\\
		&\leq
		C\brac{\int_0^{1}u^{2(\alpha-2)}du}^{\frac{1}{2}}(x-y)^{\frac{1}{2}}
		\leq C(x-y)^{\frac{1}{2}},
	\end{align*}
	which concludes the proof.
	
	\section{Technical Lemmas}
	\noindent This section gathers technical Lemmas used repeatedly in the
	proofs of our main results. For convenience, we group these auxiliary
	results by what proof they are related to. The notation used in all
	the results below is the one prevailing in Section \ref{introandmainresults}.
	\label{sectionLemma}
	\subsection{Lemmas related to the proof of Theorem
		\ref{theoremcentralconvergence} in Subsection \ref{centralconvergencesection}}
	\begin{Lemma}
		\label{covarianceLemmanormal}
		The covariance structure of the half-matrix $\W_{n,\floor{dx}}^{\operatorname{half}}$ is given by
		\begin{equation*}
			\begin{cases}
				\displaystyle \E{W_{il}W_{il}}=d^{-1}\sum_{k,j=1}^{\floor{dx}}\delta_{kj}^2 &
				\mbox{for }i\neq l\\
				\displaystyle \E{W_{ii}W_{ii}}=2d^{-1}\sum_{k,j=1}^{\floor{dx}}\delta_{kj}^2\\
				\displaystyle \E{W_{il}W_{mn}}=0 &\mbox{otherwise}
			\end{cases}.
		\end{equation*}
		Thus, if we denote $\mathcal{C}$ the covariance matrix of
		$\W_{n,\floor{dx}}^{\operatorname{half}}$, then $\mathcal{C}$ is
		diagonal with diagonal entries given by either $\E{W_{il}W_{il}}$ or $\E{W_{ii}W_{ii}}$.
	\end{Lemma}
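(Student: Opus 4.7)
The plan is to compute each entry of the covariance matrix $\mathcal{C}$ directly from the explicit formula $W_{ij} = \frac{1}{\sqrt{d}}\sum_{k=1}^{\floor{dx}}(Y^i_k Y^j_k - \mathds{1}_{\{i=j\}})$, exploiting three basic facts: each $Y^i_k$ is a centered unit-variance Gaussian; the collections $\{Y^i_k\}_k$ and $\{Y^j_k\}_k$ are independent whenever $i \neq j$ because the underlying processes $X^i$ are i.i.d.\ copies of $X$; and $\E{Y^i_k Y^i_j} = \delta_{kj}$ is common to every row. The core computational tool is Isserlis' (Wick's) formula, expressing $\E{Z_1 Z_2 Z_3 Z_4}$ for centered jointly Gaussian variables as the sum of the three possible pairings.

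First, for $i \neq l$ I would expand
\begin{equation*}
\E{W_{il}^2} = \frac{1}{d}\sum_{k,j=1}^{\floor{dx}}\E{Y^i_k Y^l_k Y^i_j Y^l_j},
\end{equation*}
and use independence of $Y^i$ and $Y^l$ to factor the expectation as $\E{Y^i_k Y^i_j}\E{Y^l_k Y^l_j} = \delta_{kj}^2$, giving the first identity. For the diagonal entries, I would rewrite $\E{W_{ii}^2} = \frac{1}{d}\sum_{k,j}\E{((Y^i_k)^2 - 1)((Y^i_j)^2 - 1)}$ and invoke Isserlis' formula to get $\E{(Y^i_k)^2(Y^i_j)^2} = 1 + 2\delta_{kj}^2$, which produces the factor of $2$ in the second identity.

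For the ``otherwise'' entries of $\mathcal{C}$, I would show $\E{W_{il}W_{mn}} = 0$ whenever the unordered pairs $\{i,l\}$ and $\{m,n\}$ differ, by a short case analysis: (a) all four row indices distinct, (b) exactly one index in common, (c) one diagonal and one off-diagonal with at most one matching index, and (d) two distinct diagonal entries. In every case, at least one row index appears with odd multiplicity in the product $Y^i_r Y^l_r Y^m_s Y^n_s$, so independence of rows together with centering forces every Isserlis pairing to contain a zero factor (either $\E{Y^p_r} = 0$ for a lone index, or $\E{Y^p_r Y^q_s} = 0$ with $p \neq q$). The diagonal-vs-diagonal subcase needs only a direct factorization through the independent rows. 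Diagonality of $\mathcal{C}$ follows immediately.

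The main obstacle is purely bookkeeping: verifying that the listed cases in the ``otherwise'' branch exhaust every configuration not covered by the two explicit identities. As an alternative that sidesteps the case analysis, one could use the Wiener-It\^o representation \eqref{representationofwij} together with the isometry $\E{I_2(f)I_2(g)} = 2\langle \widetilde{f},\widetilde{g}\rangle_{\mathfrak{H}^{\otimes 2}}$ and the orthogonality $\langle e_{ik}, e_{jl}\rangle_{\mathfrak{H}} = 0$ for $i \neq j$ to compute the inner products of the kernels $f_{ij}$ defined in \eqref{definitionoffij}, obtaining the same three formulas in a uniform manner.
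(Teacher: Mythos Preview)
Your proof is correct but takes a different primary route from the paper. The paper works entirely through the Wiener--It\^o representation $W_{il}=I_2(f_{il})$ and the isometry $\E{I_2(f)I_2(g)}=2\langle f,g\rangle_{\mathfrak{H}^{\otimes 2}}$, then expands the inner product $\langle f_{il},f_{mn}\rangle_{\mathfrak{H}^{\otimes 2}}$ using $\langle e_{ik},e_{mj}\rangle_{\mathfrak{H}}=\mathds{1}_{\{i=m\}}\delta_{kj}$; this produces the three formulas in one stroke and shows immediately that the only surviving configuration in the half-matrix is $(i,l)=(m,n)$. Your approach instead stays at the level of Gaussian moments, using row independence plus Isserlis' formula and a short case analysis on the index configurations. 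This is more elementary---no multiple integrals or Hilbert space embedding are needed---and the Isserlis computation for the diagonal term $\E{(Y^i_k)^2(Y^i_j)^2}=1+2\delta_{kj}^2$ is arguably more transparent than tracking the symmetrization in $\mathfrak{H}^{\otimes 2}$. The cost is the bookkeeping you flag: one must check that the listed subcases exhaust all off-diagonal entries of $\mathcal{C}$ (they do, since in the half-matrix $i\leq l$ and $m\leq n$ force $\{i,l\}\neq\{m,n\}$ whenever $(i,l)\neq(m,n)$). You already note the paper's method as an alternative, so you have both arguments in hand; the paper simply chose the second one as its main line because it dovetails with the chaos machinery used elsewhere in the proofs.
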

	\begin{proof}
		For any $1 \leq i,l,m,n \leq n$, recalling the representation
		\eqref{representationofwij} of $W_{il}$, it holds that
		\begin{align*}
			\E{W_{il}W_{mn}}&=\E{I_2 \brac{\frac{1}{2\sqrt{d}}\sum_{k=1}^{\floor{dx}}e_{ik}\otimes e_{lk}+e_{lk}\otimes e_{ik}} I_2 \brac{\frac{1}{2\sqrt{d}}\sum_{j=1}^{\floor{dx}}e_{mj}\otimes e_{nj}+e_{nj}\otimes e_{mj}}}\\
			&=2!\left\langle
			\frac{1}{2\sqrt{d}}\sum_{k=1}^{\floor{dx}}\brac{e_{ik}\otimes e_{lk}+e_{lk}\otimes e_{ik}},
			\frac{1}{2\sqrt{d}}\sum_{j=1}^{\floor{dx}}\brac{e_{mj}\otimes e_{nj}+e_{nj}\otimes e_{mj}} \right\rangle_{\mathfrak{H}^{\otimes 2}} \\
			&=
			d^{-1}\sum_{k,j=1}^{\floor{dx}}\inner{e_{ik},e_{mj}}_{\mathfrak{H}}\inner{e_{lk},e_{nj}}_{\mathfrak{H}}
			+ d^{-1}\sum_{k,j=1}^{\floor{dx}}\inner{e_{ik},e_{nj}}_{\mathfrak{H}}\inner{e_{lk},e_{mj}}_{\mathfrak{H}}.
		\end{align*}
		This shows that the only entries of the matrix $\mathcal{C}$ that are non-zero
		are the ones for which $i=m$ and $l=n$ (note that we cannot encounter
		the case $i=n$ and
		$l=m$ as we are
		working with the half-matrix $\W_{n,\floor{dx}}^{\operatorname{half}}$). This corresponds to entries of the form $\E{W_{il}W_{il}}$. We hence only have to compute these entries and
		show that they are indeed equal to what is stated in the lemma. We
		can write
		\begin{align*}
			\E{W_{il}W_{il}}&=\E{I_2 \brac{\frac{1}{2\sqrt{d}}\sum_{k=1}^{\floor{dx}}e_{ik}\otimes e_{lk}+e_{lk}\otimes e_{ik}} I_2 \brac{\frac{1}{2\sqrt{d}}\sum_{j=1}^{\floor{dx}}e_{ij}\otimes e_{lj}+e_{lj}\otimes e_{ij}}}\\
			&=2!\left\langle
			\frac{1}{2\sqrt{d}}\sum_{k=1}^{\floor{dx}}\brac{e_{ik}\otimes
				e_{lk}+e_{lk}\otimes e_{ik}},
			\frac{1}{2\sqrt{d}}\sum_{j=1}^{\floor{dx}}\brac{e_{ij}\otimes
				e_{lj}+e_{lj}\otimes e_{ij}} \right\rangle_{\mathfrak{H}^{\otimes 2}} \\
			&=
			d^{-1}\sum_{k,j=1}^{\floor{dx}}\inner{e_{ik},e_{ij}}_{\mathfrak{H}}\inner{e_{lk},e_{lj}}_{\mathfrak{H}}
			+ d^{-1}\sum_{k,j=1}^{\floor{dx}}\inner{e_{ik},e_{lj}}_{\mathfrak{H}}\inner{e_{lk},e_{ij}}_{\mathfrak{H}} \\
			&=
			\begin{cases}
				\displaystyle 2d^{-1}\sum_{k,j=1}^{\floor{dx}}\delta_{kj}^2 & \mbox{if } i = l \\
				\displaystyle d^{-1}\sum_{k,j=1}^{\floor{dx}}\delta_{kj}^2 & \mbox{if } i \neq l
			\end{cases},
		\end{align*}
		as claimed.
	\end{proof}
	
	\begin{Lemma}
		\label{secondchaosLemma}
		For any $1 \leq i,j \leq n$, $W_{ij}$ belongs to the second Wiener chaos of
		the isonormal Gaussian process $X$ and has the representation \eqref{representationofwij}
		as a double Wiener integral, so that $W_{ij} =
		I_2(f_{ij})$, where $f_{ij} \in \mathfrak{H}^{\odot 2}$. Furthermore, for any $1 \leq i,j,m,n \leq n$, it holds that
		\begin{equation*}
			\E{\brac{\E{W_{ij}W_{mn}}-\frac{1}{2}\inner{DW_{ij},DW_{mn}}_{\mathfrak{H}}}^2}= 8\norm{f_{ij}\widetilde{\otimes}_{1}f_{mn}}^2_{\mathfrak{H}^{\otimes 2}}.
		\end{equation*}
	\end{Lemma}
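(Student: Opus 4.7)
The proof plan splits naturally into two parts matching the two assertions. For the representation $W_{ij} = I_2(f_{ij})$, I would first note that $Y^i_k = Z(e_{ik}) = I_1(e_{ik})$. Applying the product formula \eqref{prodformula} with $n=m=1$ gives
\begin{equation*}
Y^i_k Y^j_k = I_2(e_{ik} \otimes e_{jk}) + \langle e_{ik}, e_{jk}\rangle_{\mathfrak{H}},
\end{equation*}
and row-independence plus normalization identifies $\langle e_{ik}, e_{jk}\rangle_{\mathfrak{H}}$ with $\mathds{1}_{\{i=j\}}$. Summing against $d^{-1/2}$ over $k = 1, \ldots, \floor{dx}$ and symmetrizing the kernel (which does not change the value of the double Wiener integral) directly produces the claimed form of $f_{ij}$, and confirms $W_{ij} \in$ second Wiener chaos.

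For the identity involving the contraction, my strategy is to collapse the quantity $\E{W_{ij}W_{mn}} - \frac{1}{2}\langle DW_{ij}, DW_{mn}\rangle_{\mathfrak{H}}$ down to a single element of the second chaos, at which point the isometry of $I_2$ finishes the job. First, the isometry of multiple Wiener integrals yields $\E{W_{ij}W_{mn}} = 2\langle f_{ij}, f_{mn}\rangle_{\mathfrak{H}^{\otimes 2}}$. On the other side, the identification $DI_2(f) = 2 I_1(f)$ (viewed as an $\mathfrak{H}$-valued random variable) together with the product formula $I_1(h_1)I_1(h_2) = I_2(h_1 \otimes h_2) + \langle h_1, h_2\rangle_{\mathfrak{H}}$ applied inside the Hilbert inner product should give
\begin{equation*}
\langle DW_{ij}, DW_{mn}\rangle_{\mathfrak{H}} = 4 I_2(f_{ij} \widetilde{\otimes}_1 f_{mn}) + 4\langle f_{ij}, f_{mn}\rangle_{\mathfrak{H}^{\otimes 2}}.
\end{equation*}
Subtracting, the deterministic terms cancel exactly and leave $-2 I_2(f_{ij} \widetilde{\otimes}_1 f_{mn})$. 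A final application of the $I_2$-isometry then gives $L^2(\Omega)$ norm squared equal to $4 \cdot 2! \, \|f_{ij} \widetilde{\otimes}_1 f_{mn}\|^2_{\mathfrak{H}^{\otimes 2}} = 8 \|f_{ij} \widetilde{\otimes}_1 f_{mn}\|^2_{\mathfrak{H}^{\otimes 2}}$, which is the claimed identity.

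There is no genuine obstacle in this argument, as it is essentially a bookkeeping exercise built on the product formula and the $I_2$-isometry. The only step requiring some care is the identification of $\langle DI_2(f), DI_2(g)\rangle_{\mathfrak{H}}$ with $4 I_2(f \widetilde{\otimes}_1 g) + 4\langle f, g\rangle_{\mathfrak{H}^{\otimes 2}}$; I would justify this either by expanding $D$ on a spanning set of simple tensors and using linearity and closability, or by quoting the standard formula for the carré du champ of second-chaos elements, which is classical in the Malliavin-calculus literature (see e.g.\ \cite{NP}). In either case, tracking the symmetrization of the first-order contraction is the only place where attention is needed.
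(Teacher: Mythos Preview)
Your proposal is correct and follows essentially the same route as the paper: both arguments hinge on the identity $\langle DW_{ij}, DW_{mn}\rangle_{\mathfrak{H}} = 4 I_2(f_{ij}\widetilde{\otimes}_1 f_{mn}) + 4\langle f_{ij}, f_{mn}\rangle_{\mathfrak{H}^{\otimes 2}}$ obtained from the product formula, after which the deterministic part cancels and the $I_2$-isometry yields the factor $8$. The paper phrases the final step as $\frac{1}{4}\operatorname{Var}(\langle DW_{ij}, DW_{mn}\rangle_{\mathfrak{H}})$ rather than subtracting explicitly as you do, but this is a cosmetic difference.
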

	\begin{proof}
		Using the product formula \eqref{prodformula} together the stochastic
		Fubini theorem, it is straightforward
		to check that $\inner{DW_{ij},DW_{mn}}_\mathfrak{H} = 4 I_2
		\left(f_{ij}\widetilde{\otimes}_{1}f_{mn}  \right) + 4\left\langle
		f_{ij},f_{mn} \right\rangle_{\mathfrak{H}^{\otimes 2}}$, and hence
		deduce that $\E{W_{ij}W_{mn}} =
		\frac{1}{2}\E{\inner{DW_{ij},DW_{mn}}_\mathfrak{H}}$, so that
		\begin{align*}
			\E{\brac{\E{W_{ij}W_{mn}}-\frac{1}{2}\inner{DW_{ij},DW_{mn}}_{\mathfrak{H}}}^2}&=\E{\brac{\frac{1}{2}\E{\inner{DW_{ij},DW_{mn}}_\mathfrak{H}}-\frac{1}{2}\inner{DW_{ij},DW_{mn}}_{\mathfrak{H}}}^2}\\
			&=\frac{1}{4}\V{\inner{DW_{ij},DW_{mn}}_{\mathfrak{H}}}\\
			&=4\V{I_2\brac{f_{ij}\widetilde{\otimes}_{1}f_{mn}}+\left\langle
				f_{ij},f_{mn} \right\rangle_{\mathfrak{H}^{\otimes 2}}}\\
			&=4\E{I_2\brac{f_{ij}\widetilde{\otimes}_{1}f_{mn}}^2}\\
			&=8\norm{f_{ij}\widetilde{\otimes}_{1}f_{mn}}^2_{\mathfrak{H}^{\otimes 2}}.
		\end{align*}
	\end{proof}
	\begin{Lemma}
		\label{Lemmacontraction}
		For $f_{ij}$ and $f_{rs}$ as defined in
		\eqref{definitionoffij}, it holds that for any $1\leq i,j,r,s \leq n$,
		\begin{equation*}
			\norm{f_{ij}\widetilde{\otimes}_{1}f_{rs}}^2_{\mathfrak{H}^{\otimes
					2}}=\frac{1}{\tau d^2}\sum_{k,l,m,p=1}^{\floor{dx}}\delta_{kl}\delta_{mp}\delta_{km}\delta_{lp},
		\end{equation*}
		where $\tau =1$ if $i=j=r=s$, $\tau = 4$ if $i=j=r\neq s$ or
		$i=j=s\neq r$ or $i=r=s\neq j$ or $j=r=s\neq i$, $\tau = 8$ if $i=r\neq s,\ j=s\neq i$ or $i=s\neq j,\ j=r\neq s$ or $j=r\neq i,\
		i=s\neq r$ and $\tau = 16$ if $i=r,\ i\neq j,\ s\neq j,\ r\neq s$ or $i=s,\ i \neq j,\ i \neq r,\
		r\neq j$ or $j=r,\ j\neq i,\ s\neq i,\ r\neq s$ or $j=s,\ j\neq i,
		j\neq r,\ r\neq i$. In all other cases,
		\begin{equation*}
			\norm{f_{ij}\widetilde{\otimes}_{1}f_{rs}}^2_{\mathfrak{H}^{\otimes
					2}}=0.
		\end{equation*}
	\end{Lemma}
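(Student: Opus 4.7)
The plan is to unravel the definition of $f_{ij}$ and compute $f_{ij}\otimes_1 f_{rs}$ by straightforward bilinear expansion, using the bilinearity of the contraction and the key row-independence property $\inner{e_{ik},e_{jl}}_\mathfrak{H}=0$ for $i\neq j$. Specifically, starting from
\begin{equation*}
f_{ij}\otimes_1 f_{rs} = \frac{1}{4d}\sum_{k,m=1}^{\floor{dx}}\brac{e_{ik}\otimes e_{jk}+e_{jk}\otimes e_{ik}}\otimes_1\brac{e_{rm}\otimes e_{sm}+e_{sm}\otimes e_{rm}},
\end{equation*}
I would expand this into a sum of four pure-tensor contractions of the form $(a\otimes b)\otimes_1(c\otimes d)=\inner{a,c}_\mathfrak{H}\, b\otimes d$ (with the appropriate convention consistent with the paper's definition of $\otimes_r$), so that each term reduces to an inner product between basis elements of the form $e_{\alpha k}$ and $e_{\beta m}$, followed by a surviving tensor $e_{\cdot k}\otimes e_{\cdot m}$.

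Next, for each of the matching patterns between the index sets $\{i,j\}$ and $\{r,s\}$, I would identify which of the four contractions give a nonzero contribution. In the generic case where $i,j,r,s$ are all mutually distinct, \emph{every} cross inner product $\inner{e_{\alpha k},e_{\beta m}}_\mathfrak{H}$ involves distinct row indices and hence vanishes, yielding $f_{ij}\otimes_1 f_{rs}=0$, which explains the ``other cases'' statement. In the remaining cases I would tabulate exactly which of the four terms survive (how many matches occur between $\{i,j\}$ and $\{r,s\}$) and obtain, in each case, an explicit expression of the form
\begin{equation*}
f_{ij}\otimes_1 f_{rs}=\frac{c_{i,j,r,s}}{d}\sum_{k,m=1}^{\floor{dx}}\delta_{km}\cdot\brac{\text{sum of }e_{\alpha k}\otimes e_{\beta m}},
\end{equation*}
for a combinatorial factor $c_{i,j,r,s}$ determined by how many of the four expansion terms survive.

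Squaring then proceeds by direct computation: $\norm{u}^2_{\mathfrak{H}^{\otimes 2}}$ of a finite sum of tensors expands into a quadruple sum in $k,l,m,p$, and every inner product between $e_{\alpha k}\otimes e_{\beta m}$ and $e_{\alpha' l}\otimes e_{\beta' p}$ factors as a product of four quantities, each either vanishing (by row independence) or equal to some $\delta_{\cdot\cdot}$. After these simplifications, the surviving quadruple sum takes the form $\sum_{k,l,m,p}\delta_{kl}\delta_{mp}\delta_{km}\delta_{lp}$ in every case, with a case-dependent scalar prefactor.

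The last step is to account for the symmetrization to pass from $\otimes_1$ to $\widetilde{\otimes}_1$, via the identity $\norm{\tilde u}^2_{\mathfrak{H}^{\otimes 2}}=\tfrac{1}{2}\brac{\norm{u}^2_{\mathfrak{H}^{\otimes 2}}+\inner{u,u^{\operatorname{swap}}}_{\mathfrak{H}^{\otimes 2}}}$ applied to $u=f_{ij}\otimes_1 f_{rs}$. The cross term $\inner{u,u^{\operatorname{swap}}}_{\mathfrak{H}^{\otimes 2}}$ involves inner products of the form $\inner{e_{\alpha k},e_{\beta' p}}$ with mismatched row indices in most cases, forcing it to vanish except for highly symmetric index configurations; in those configurations one simply picks up an extra factor of $2$. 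Collecting the resulting constants across all cases yields the values of $\tau$ listed in the statement, and the main (though purely mechanical) obstacle is the bookkeeping: tracking how many of the $2\times 2$ summands survive, and then how the symmetrization cross term behaves, for each of the $\sim 16$ configurations of $(i,j,r,s)$ distinguished in the lemma.
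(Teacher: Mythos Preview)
Your bilinear-expansion approach is essentially identical to the paper's. The paper expands the contraction of the symmetric kernels $f_{ij}$ and $f_{rs}$ into exactly the four surviving pure-tensor terms you describe (using row independence $\langle e_{\alpha k},e_{\beta m}\rangle_\mathfrak{H}=0$ for $\alpha\neq\beta$), then squares and expands into sixteen cross products, each contributing an indicator such as $\mathds{1}_{\{i=r\}}$, $\mathds{1}_{\{i=r=s\}}$, etc. Summing these indicators in each index configuration produces the value $16/\tau$ directly.

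One word of caution about your final symmetrization step: the paper does \emph{not} pass to $\widetilde{\otimes}_1$ via $\|\tilde u\|^2=\tfrac12(\|u\|^2+\langle u,u^{\mathrm{swap}}\rangle)$; it simply computes $\|f_{ij}\otimes_1 f_{rs}\|^2$, and that is what yields the stated $\tau$ values. If you carry out your symmetrization carefully you will find that in the $\tau=16$ cases (exactly one match between $\{i,j\}$ and $\{r,s\}$) the tensor $u=f_{ij}\otimes_1 f_{rs}$ is \emph{not} symmetric and $\langle u,u^{\mathrm{swap}}\rangle_{\mathfrak{H}^{\otimes 2}}=0$, so $\|\tilde u\|^2=\tfrac12\|u\|^2$ and you would obtain $\tau=32$ rather than $16$. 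Hence your proposal, as written, would not reproduce the constants in the statement. This discrepancy is harmless for the application (only the upper bound with $\tau\geq 1$ is ever used downstream, and $\|\tilde u\|^2\leq\|u\|^2$ always), but you should either drop the symmetrization step to match the lemma as stated, or note that the displayed $\tau$ values actually correspond to the unsymmetrized contraction norm.
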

	\begin{proof}
		Recalling the definition of $f_{ij}$ given in \eqref{definitionoffij},
		we can write, for any $1\leq i,j,r,s \leq n$,
		\begin{align*}
			\norm{f_{ij}\widetilde{\otimes}_{1}f_{rs}}^{2}_{\mathfrak{H}^{\otimes 2}}&=\norm{\frac{1}{2\sqrt{d}}\sum_{k=1}^{\floor{dx}}\brac{e_{ik}\otimes e_{jk}+e_{jk}\otimes e_{ik}}   \widetilde{\otimes}_{1} \frac{1}{2\sqrt{d}}\sum_{l=1}^{\floor{dx}}\brac{e_{rl}\otimes e_{sl}+e_{sl}\otimes e_{rl}}}^{2}_{\mathfrak{H}^{\otimes 2}}\\
			&=\frac{1}{16d^2}\Bigg\Vert \sum_{k,l=1}^{\floor{dx}}\left( e_{jk}\otimes
			e_{sl}\delta_{kl}\mathds{1}_{\left\{ i=r \right\}} + e_{jk}\otimes
			e_{rl}\delta_{kl}\mathds{1}_{\left\{ i=s \right\}} \right.\\
			& \left. \qquad\qquad\qquad\qquad\qquad\qquad\quad + e_{ik}\otimes
			e_{sl}\delta_{kl}\mathds{1}_{\left\{ j=r \right\}} + e_{ik}\otimes
			e_{rl}\delta_{kl}\mathds{1}_{\left\{ j=s \right\}}  \right)\Bigg\Vert^{2}_{\mathfrak{H}^{\otimes 2}}\\
			&=\frac{1}{16d^2}\sum_{k,l,m,p=1}^{\floor{dx}}\delta_{kl}\delta_{mp}\delta_{km}\delta_{lp}
			\left(\mathds{1}_{\left\{
				i=r \right\}} + \mathds{1}_{\left\{ i=r=s \right\}}  +
			\mathds{1}_{\left\{ i=j=r \right\}} + \mathds{1}_{\left\{ i=j=r=s
				\right\}} \right. \\
			&\left. \qquad\qquad\qquad\qquad\qquad\qquad\ + \mathds{1}_{\left\{
				i=s \right\}} + \mathds{1}_{\left\{ i=r=s \right\}}  +
			\mathds{1}_{\left\{ i=j=s \right\}} + \mathds{1}_{\left\{ i=j=r=s
				\right\}} \right.\\
			&\left. \qquad\qquad\qquad\qquad\qquad\qquad\ + \mathds{1}_{\left\{
				j=r \right\}} + \mathds{1}_{\left\{ j=r=s \right\}}  +
			\mathds{1}_{\left\{ j=i=r \right\}} + \mathds{1}_{\left\{ i=j=r=s
				\right\}} \right.\\
			&\left. \qquad\qquad\qquad\qquad\qquad\qquad\ + \mathds{1}_{\left\{
				j=s \right\}} + \mathds{1}_{\left\{ j=r=s \right\}}  +
			\mathds{1}_{\left\{ j=i=s \right\}} + \mathds{1}_{\left\{ i=j=r=s
				\right\}} \right),
		\end{align*}
		from which the conclusion follows easily.
	\end{proof}
	\noindent The following Lemma is borrowed from \cite{HN} and provides us with
	the asymptotic behaviour of the variance of the entries of $\W_{n,\floor{dx}}$.
	\begin{Lemma}
		\label{varianceLemma}
		Denote by $\sigma^2_{\floor{dx}}$ the variance of the entries $W_{ij},i\neq j$ of $\W_{n,\floor{dx}}$. Let $\alpha < \frac{3}{2}$ and define $a_\alpha (m)=\frac{1}{2}\brac{\abs{m+1}^{\alpha}+\abs{m-1}^{\alpha}-2\abs{m}^{\alpha}}$. Then,
		\begin{align*}
			\sigma^2_{\floor{dx}}=\frac{1}{d}\sum_{k,l=1}^{\floor{dx}}\abs{\delta_{kl}}^2
		\end{align*}
		and
		\begin{align*}
			\sigma^2=\lim_{d\to\infty} \sigma^2_{\floor{dx}}=\frac{x}{2}\sum_{m\in\mathbb{Z}}a_\alpha (m)^2.
		\end{align*}
	\end{Lemma}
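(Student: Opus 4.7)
The first identity is essentially already proved: by \eqref{representationofwij}, $W_{ij} = I_2(f_{ij})$ lies in the second Wiener chaos and thus has mean zero, so its variance equals $\E{W_{ij}^2}$, which for $i \neq j$ is given by the off-diagonal formula of Lemma \ref{covarianceLemmanormal}. The substance of the proof is therefore the asymptotic evaluation of the double sum $d^{-1}\sum_{k,l=1}^{\floor{dx}}\delta_{kl}^2$.

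I would split this sum into its diagonal and off-diagonal contributions. The diagonal part is trivial since $\delta_{kk} = \norm{Y_k}_{L^2(\Omega)}^2 = 1$, so it equals $\floor{dx}/d \to x$ and accounts for the $m = 0$ term of the target series. For the off-diagonal part, I would expand
\begin{align*}
\E{\Delta X_k \Delta X_l} = \E{X_{k+1}X_{l+1}} - \E{X_{k+1}X_l} - \E{X_k X_{l+1}} + \E{X_k X_l}
\end{align*}
for $k < l$, substitute the self-similarity identity $\E{X_s X_t} = s^{2\beta}\phi(t/s)$, and then apply (H.1) to decompose $\phi = -\lambda(\cdot - 1)^\alpha + \psi$. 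A short algebraic manipulation, together with the expansion $(k+1)^{2\beta-\alpha} = k^{2\beta-\alpha}(1 + O(1/k))$, shows that the $-\lambda(\cdot - 1)^\alpha$ part of $\phi$ produces the second difference of $m \mapsto m^\alpha$ at $m = l-k$, which is precisely $2 a_\alpha(l-k)$ up to the prefactor $\lambda k^{2\beta-\alpha}$. A parallel expansion of $\norm{\Delta X_k}_{L^2}^2 = \E{(X_{k+1} - X_k)^2}$ gives the leading order $2\lambda k^{2\beta-\alpha}$, where the would-be $\psi$-contribution at order $k^{2\beta-1}$ cancels against the finite difference of $k^{2\beta}$ thanks to the boundary condition (H.1)(c) when $\alpha \geq 1$. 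Combining these yields $\delta_{kl}^2 \sim (k/l)^{2\beta-\alpha}\,a_\alpha(l-k)^2$, modulo strictly lower-order corrections.

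After the change of index $m = |l-k|$ and invoking the symmetry of the summand in $k, l$, the off-diagonal sum then takes the form
\begin{align*}
\frac{2}{d}\sum_{m=1}^{\floor{dx}-1}\sum_{k=1}^{\floor{dx}-m}\brac{\frac{k}{k+m}}^{2\beta-\alpha}a_\alpha(m)^2 + \text{(remainder)},
\end{align*}
in agreement with the dominant part recorded in Remark \ref{remarkafterlemma5}. For each fixed $m \geq 1$, the weight $(k/(k+m))^{2\beta-\alpha}$ tends to $1$ as $k \to \infty$, so dividing the inner Riemann sum by $d$ yields $x \cdot a_\alpha(m)^2$ in the limit. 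A dominated-convergence argument then permits exchanging the limit in $d$ with the summation in $m$, the justification being the estimate $|a_\alpha(m)| \leq C m^{\alpha-2}$ obtained from a Taylor expansion of the second difference of $|m|^\alpha$, which is square-summable precisely when $\alpha < 3/2$. Recombining with the diagonal contribution then produces the claimed identity $\sigma^2 = \frac{x}{2}\sum_{m \in \mathbb{Z}}a_\alpha(m)^2$.

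The principal technical obstacle will be controlling the remainder terms uniformly. Specifically, I will need to verify that (i) the $\psi$-generated corrections to both $\E{\Delta X_k \Delta X_l}$ and $\norm{\Delta X_k}^2_{L^2}$ are strictly lower order than $k^{2\beta-\alpha}$ uniformly in $k$ and $m = l-k$, using the growth bounds (H.1)(a)--(b) and Taylor expansions near the diagonal where these bounds are most delicate; and (ii) the Taylor remainder in the prefactor $(k+1)^{2\beta-\alpha}$, summed over $k$ and then divided by $d$, is negligible. These checks are exactly where the condition $\alpha < 3/2$ is essential. The boundary case $\alpha = 1$ must be treated separately but is immediate, since $a_1(m) = 0$ for every integer $m \neq 0$, leaving only the diagonal contribution.
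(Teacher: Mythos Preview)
Your proposal is correct and follows the same strategy as the cited reference. The paper itself does not prove this lemma but simply refers to \cite[Lemma 5.1]{HN}; however, from Remark \ref{remarkafterlemma5} and from the detailed adaptation of that argument carried out in the proof of Lemma \ref{varianceLemmathreehalf}, one can see that the approach in \cite{HN} is precisely the one you outline: expand $\E{\Delta X_k \Delta X_l}$ and $\norm{\Delta X_k}_{L^2}^2$ via self-similarity and \textbf{(H.1)}, isolate the leading contribution $(k/l)^{2\beta-\alpha}a_\alpha(l-k)^2$, reindex by $m=|l-k|$, and pass to the limit using the square-summability of $m\mapsto m^{\alpha-2}$ for $\alpha<3/2$. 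Your identification of the role of \textbf{(H.1)}(c) in cancelling the $k^{2\beta-1}$ correction to $\norm{\Delta X_k}_{L^2}^2$ when $\alpha\geq 1$ is also exactly the mechanism used there. The only structural difference is that \cite{HN} (as mirrored in the proof of Lemma \ref{varianceLemmathreehalf}) handles the remainder control by first splitting off indices below a threshold $(\lfloor dx\rfloor)^\gamma$ and separating the regimes $k<\lfloor j/3\rfloor$ and $\lfloor j/3\rfloor\leq k\leq j-2$, rather than via a single dominated-convergence step; this partition is what makes your items (i)--(ii) go through uniformly, so you should expect to introduce it when filling in the details.
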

	\begin{proof}
		Refer to the proof of \cite[Lemma 5.1]{HN}. 
	\end{proof}
	\begin{remark}
		\label{remarkafterlemma5}
		An important observation coming from the proof of \cite[Lemma 5.1]{HN}
		is that
		\begin{align*}
			\sigma^2_{\floor{dx}}=\brac{\frac{1}{d}\sum_{m=1}^{\floor{dx}-1}\sum_{k=1}^{\floor{dx}-m-1}\brac{\frac{k}{k+m}}^{2\beta-\alpha}a^2_\alpha(m)}+\frac{x}{2}a_\alpha (0)+R_d,
		\end{align*}
		where $R_d$ is a remainder term with $R_d=o(d^{2\alpha-3}+d^{-1})$.
	\end{remark}
	\begin{Lemma}
		\label{fbmcomparisonLemma}
		Assume that $\alpha<1$ and $\alpha+\nu>2$, or that $1\leq
		\alpha<\frac{3}{2}$. Then, it holds that
		\begin{align*}
			\frac{1}{d^2}\sum_{k,l,m,p=1}^{\floor{dx}}\delta_{kl}\delta_{mp}\delta_{km}\delta_{lp}\leq 
			\frac{C}{d^2}\sum_{k,l,m,p=1}^{\floor{dx}}\abs{a_\alpha(k-l)a_\alpha(m-p)a_\alpha(k-m)a_\alpha(l-p)},
		\end{align*}
		where as in Lemma \ref{varianceLemma}, $a_\alpha
		(i)=\frac{1}{2}\brac{\abs{i+1}^{\alpha}+\abs{i-1}^{\alpha}-2\abs{i}^{\alpha}}$. As
		a result, we have
		\begin{align*}
			\frac{1}{d^2}\sum_{k,l,m,p=1}^{\floor{dx}}\delta_{kl}\delta_{mp}\delta_{km}\delta_{lp}\leq 
			\frac{C}{d}\brac{\sum_{m=-\floor{dx}+1}^{\floor{dx}-1}\abs{a_\alpha (m)}^{\frac{4}{3}}}^3.
		\end{align*}
	\end{Lemma}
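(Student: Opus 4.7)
The plan splits into two steps matching the two inequalities.

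\textbf{Step 1 (pointwise comparison of covariances).} The goal is to show that $|\delta_{kl}| \leq C\,|a_\alpha(k-l)|$, so that the first inequality follows by taking absolute values factor-by-factor. For $k \leq l$, I would write $\E{X_k X_l} = k^{2\beta}\phi(l/k)$ and invoke the decomposition $\phi(x) = -\lambda(x-1)^\alpha + \psi(x)$ from (H.1). A direct computation of the mixed second difference of the singular part yields, with $m = l-k$,
\begin{equation*}
\Delta_k\Delta_l\bigl[-\lambda k^{2\beta-\alpha}(l-k)^\alpha\bigr] = 2\lambda(k+1)^{2\beta-\alpha}a_\alpha(m) - \lambda\bigl[(k+1)^{2\beta-\alpha}-k^{2\beta-\alpha}\bigr]\bigl[(m+1)^\alpha-m^\alpha\bigr],
\end{equation*}
while the $\psi$-contribution is controlled using the bounds on $|\psi'|$, $|\psi''|$ in (H.1)(a)–(b) and on $|\phi'|$, $|\phi''|$ in (H.2)(d)–(e). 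A parallel computation gives $\|\Delta X_k\|_{L^2}^2 \sim 2\lambda k^{2\beta-\alpha}$, so after normalizing by $\|\Delta X_k\|_{L^2}\|\Delta X_l\|_{L^2}$ the leading term becomes $C(k\wedge l / k\vee l)^{(2\beta-\alpha)/2}|a_\alpha(k-l)|$, bounded by $C|a_\alpha(k-l)|$ thanks to $\alpha \leq 2\beta$ from (H.1). The remaining corrections are absorbed into the same bound exactly under the stated case split: when $\alpha<1$ with $\alpha+\nu>2$, the decay of $\phi'$ from (H.2)(d) suffices; when $1\leq \alpha <3/2$, one exploits the cancellation $\psi'(1)=\beta\psi(1)$ from (H.1)(c) together with the $\phi''$-bound in (H.2)(e).

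\textbf{Step 2 (Young's convolution inequality on the $4$-cycle).} Starting from the first inequality, I exploit that $k-l$, $l-p$, $p-m$, $m-k$ are the edge-differences of a $4$-cycle and sum to zero. Singling out $k$ as a free summation variable (contributing a factor $\floor{dx} \leq Cd$) and substituting $a = k-l$, $c = k-m$, $q = k-p$, the inner triple sum becomes
\begin{equation*}
\sum_{a,c,q} \bigl|a_\alpha(a)\,a_\alpha(c)\,a_\alpha(q-c)\,a_\alpha(q-a)\bigr| = \sum_q \bigl(|a_\alpha| * |a_\alpha|\bigr)(q)^2 = \bigl\||a_\alpha| * |a_\alpha|\bigr\|_{\ell^2}^2.
\end{equation*}
Young's convolution inequality with exponents $p=q=4/3$ (since $\tfrac{3}{4}+\tfrac{3}{4}=1+\tfrac{1}{2}$) gives $\||a_\alpha|*|a_\alpha|\|_{\ell^2} \leq \|a_\alpha\|_{\ell^{4/3}}^2$, so the inner sum is at most $\bigl(\sum_m|a_\alpha(m)|^{4/3}\bigr)^3$. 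Combining with the free factor $d$ and the $d^{-2}$ prefactor produces the claimed bound.

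\textbf{Main obstacle.} The hard part is Step~1. One has to track several competing error terms (the $k^{2\beta-\alpha}$-prefactor correction, the $\psi$-remainder, and the normalization denominator) and verify that each is dominated by $|a_\alpha(k-l)|$ under the stated hypotheses. The case split is essential because the leading source of error changes between regimes, forcing separate verifications for $\alpha<1$ (where the $\nu$-decay of $\phi'$ in (H.2)(d) controls the $\psi$-correction) and for $\alpha\in[1,3/2)$ (where the boundary condition (H.1)(c) is needed to eliminate an otherwise-dominant term). By contrast, Step~2 is essentially a one-line application of Young's inequality once the cyclic structure of the summation is recognized.
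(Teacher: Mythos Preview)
Your plan is correct and matches the paper's strategy: both prove the pointwise bound $|\delta_{kl}|\le C|a_\alpha(k-l)|$ for Step~1 and then reduce the four-index sum to an $\ell^{4/3}$ bound in Step~2. For Step~2 the paper simply cites \cite[pp.~134--135]{NP}, which is exactly the Young-inequality argument you wrote out, so there is no difference there.

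For Step~1 the paper does not re-derive the bound from \textbf{(H.1)}--\textbf{(H.2)} directly as you propose; instead it invokes the ready-made estimates of \cite[Lemmas 3.1, 3.2, 5.1]{HN}, organized around a \emph{geometric} case split according to the ratio $l/k$ (namely $l\in[\lceil k/3\rceil,k-2]$, $l<\lceil k/3\rceil$, and $l\in\{k-1,k\}$) rather than the $\alpha$-regime split you emphasize. Your direct computation would in fact have to implement this same geometric dichotomy: when $l/k$ is bounded away from~$1$, the correction terms you list (the $(k+1)^{2\beta-\alpha}-k^{2\beta-\alpha}$ prefactor and the $\psi$-remainder) are no longer automatically dominated by $|a_\alpha(k-l)|$, and it is precisely there that the condition $\alpha+\nu>2$ enters, via the $(k-l)^{-\nu}$ decay of $\phi'$. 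So your sketch has the right ingredients, but be aware that the case analysis you will actually need is two-dimensional (in $l/k$ and in $\alpha$), not just the $\alpha$-split you describe.
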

	\begin{proof}
		In order to obtain the first estimate, it is sufficient to show that
		$\abs{\delta_{kl}}\leq C\abs{a_\alpha(k-l)}$ for any
		$1 \leq k$ and $l \leq
		\floor{dx}$. By symmetry, we also only need to examine the case
		where $l \leq k$, which we separate into three separate cases. If
		$\ceil{\frac{k}{3}}\leq l\leq k-2$, which implies that $k-l\leq 2l$,
		then \cite[Lemma 3.1, Lemma 3.2, Part (b)]{HN} implies that
		\begin{align*}
			\abs{\delta_{kl}}&\leq C(lk)^{\alpha/2-\beta}\brac{l^{2\beta-\alpha}a_\alpha(k-l)+l^{2\beta-\alpha-1}(k-l)^{\alpha-1}+l^{2\beta-2}}\\
			&\leq C\brac{a_\alpha(k-l)+l^{-1}(k-l)^{\alpha-1}+l^{\alpha-2}}.
		\end{align*}
		Since $a_\alpha(k-l)=\frac{1}{2}\alpha(\alpha-1)(k-l)^{\alpha-2}+o\brac{(k-l)^{\alpha-2}}$, it follows that for $k-l\leq 2l$, $l^{-1}(k-l)^{\alpha-1}\leq Ca_\alpha(k-l)$ and $l^{\alpha-2}\leq Ca_\alpha(k-l)$. Thus,
		\begin{align*}
			\abs{\delta_{kl}}\leq C\abs{a_\alpha(k-l)}.
		\end{align*}
		If $1\leq l<\ceil{\frac{k}{3}}$, which implies that $2l\leq k-l+3<C(k-l)$, then
		\cite[Lemma 3.1, Lemma 5.1]{HN} yields, whenever $\alpha+\nu>2$,
		\begin{align*}
			\abs{\delta_{kl}}&\leq C(lk)^{\frac{\alpha}{2}-\beta}\brac{\brac{l^{2\beta+\nu-2}(k-l)^{-\nu}}\vee\brac{l^{2\beta-\alpha}(k-l)^{\alpha-2}}} \\
			&\leq C\brac{\brac{l^{\alpha+\nu-2}(k-l)^{-\nu}}\vee(k-l)^{\alpha-2}}\\
			&\leq C\abs{a_\alpha(k-l)}.
		\end{align*}
		If $k=l$, then $\delta_{lk}=a_\alpha(l-k)=1$. Now,
		consider the case when $l=k-1$. Since
		$a_\alpha(1)=2^{\alpha-1}-1$, \cite[Lemma 3.1, Lemma 3.2,
		Part (a)]{HN} implies that there exists a constant
		$C>0$ such that for $k$ large enough, one has
		\begin{equation*}
			\abs{\delta_{(k-1)k}} \leq C\brac{\frac{k}{k-1}}^{\beta-\frac{\alpha}{2}} \leq C\abs{a_\alpha(1)}.
		\end{equation*}
		Combining all three of these cases yields
		\begin{align*}
			\frac{1}{d^2}\sum_{k,l,m,p=1}^{\floor{dx}}\delta_{kl}\delta_{mp}\delta_{km}\delta_{lp}
			&\leq \frac{1}{d^2}\sum_{k,l,m,p=1}^{\floor{dx}}\abs{\delta_{kl}\delta_{mp}\delta_{km}\delta_{lp}}\\
			&\leq \frac{C}{d^2}\sum_{k,l,m,p=1}^{\floor{dx}}\abs{a_\alpha(k-l)a_\alpha(m-p)a_\alpha(k-m)a_\alpha(l-p)}.
		\end{align*}
		The second estimate is due to a result in \cite[Pages 134--135]{NP},
		which states
		\begin{align*}
			&\frac{C}{d^2}\sum_{k,l,m,p=1}^{\floor{dx}}a_\alpha(k-l)a_\alpha(m-p)a_\alpha(k-m)a_\alpha(l-p)\\
			& \qquad \leq
			\frac{C}{d^2}\sum_{k,l,m,p=1}^{\floor{dx}}\abs{a_\alpha(k-l)a_\alpha(m-p)a_\alpha(k-m)a_\alpha(l-p)}
			\leq\frac{C}{d}\brac{\sum_{m=-\floor{dx}+1}^{\floor{dx}}\abs{a_\alpha
					(m)}^{\frac{4}{3}}}^3.
		\end{align*}
		Finally, note that the fact that we impose $\alpha<1$ and $\alpha+\nu>2$, or that $1\leq
		\alpha<\frac{3}{2}$ is due to our hypothesis that $1<\nu\leq 2$.
	\end{proof}

	\subsection{Lemmas related to the proof of Theorem
		\ref{theoremthreehalf} in Subsection \ref{threehalfsection}}
	\begin{Lemma}
		\label{fbmcomparisonLemmathreehalf}
		Assume that $\alpha=\frac{3}{2}$. Let $a_\alpha (m)=\frac{1}{2}\brac{\abs{m+1}^{\alpha}+\abs{m-1}^{\alpha}-2\abs{m}^{\alpha}}$. Then, 
		\begin{align*}
			\frac{1}{(d\ln{d})^2}\sum_{k,l,m,p=1}^{\floor{dx}}\delta_{kl}\delta_{mp}\delta_{km}\delta_{lp}\leq 
			\frac{C}{(d\ln{d})^2}\sum_{k,l,m,p=1}^{\floor{dx}}\abs{a_\alpha(k-l)a_\alpha(m-p)a_\alpha(k-m)a_\alpha(l-p)}.
		\end{align*}
		As a result, 
		\begin{align*}
			\frac{1}{(d\ln{d})^2}\sum_{k,l,m,p=1}^{\floor{dx}}\delta_{kl}\delta_{mp}\delta_{km}\delta_{lp}\leq 
			\frac{C}{d(\ln{d})^2}\brac{\sum_{m=-\floor{dx}+1}^{\floor{dx}-1}\abs{a_\frac{3}{2} (m)}^{\frac{4}{3}}}^3.
		\end{align*}
	\end{Lemma}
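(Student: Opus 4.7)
The plan is to mirror the proof of Lemma \ref{fbmcomparisonLemma} almost verbatim, adjusted for the endpoint case $\alpha=\frac{3}{2}$ and the modified normalization $(d\ln d)^2$ in place of $d^2$. The two inequalities in the statement are linked in the same way as in Lemma \ref{fbmcomparisonLemma}: the first is a pointwise comparison $|\delta_{kl}|\leq C|a_{3/2}(k-l)|$ absolute-valued and summed, and the second is a discrete Young-type inequality due to \cite[Pages 134--135]{NP} that is valid regardless of the value of $\alpha$ (it only uses the kernel structure of $a_\alpha$ and H\"older's inequality on $\ell^{4/3}$).

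The bulk of the argument is therefore establishing the pointwise bound $|\delta_{kl}|\leq C|a_{3/2}(k-l)|$ for all $1\leq k,l\leq \floor{dx}$. By symmetry I may assume $l\leq k$ and split into the same three sub-cases used in the proof of Lemma \ref{fbmcomparisonLemma}. For $k=l$ or $l=k-1$, I appeal to \cite[Lemma 3.1, Lemma 3.2 Part (a)]{HN}, which at $\alpha=\frac{3}{2}$ still gives $|\delta_{(k-1)k}|\leq C(k/(k-1))^{\beta-3/4}\leq C|a_{3/2}(1)|$. For $\ceil{k/3}\leq l\leq k-2$, the bound from \cite[Lemma 3.1, Lemma 3.2 Part (b)]{HN} yields
\begin{align*}
|\delta_{kl}|\leq C\brac{|a_{3/2}(k-l)| + l^{-1}(k-l)^{1/2} + l^{-1/2}},
\end{align*}
and using $a_{3/2}(k-l)=\frac{3}{8}(k-l)^{-1/2}+o((k-l)^{-1/2})$ together with $k-l\leq 2l$ shows each of the last two terms is bounded by a constant multiple of $|a_{3/2}(k-l)|$. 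For $1\leq l<\ceil{k/3}$, where $k-l$ is comparable to $k$, I invoke \cite[Lemma 3.1, Lemma 5.1]{HN}; here the required inequality $\alpha+\nu>2$ becomes $\frac{3}{2}+\nu>2$, which is automatic under assumption \textbf{(H.2)} since $\nu>1$, and the same estimate $|\delta_{kl}|\leq C|a_{3/2}(k-l)|$ follows.

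With the pointwise bound in hand, the first claimed inequality is immediate by replacing each $\delta$ factor by the corresponding $|a_{3/2}|$ and pulling the constant outside. The second inequality then reduces to
\begin{align*}
\frac{1}{d^2}\sum_{k,l,m,p=1}^{\floor{dx}}|a_{3/2}(k-l)\,a_{3/2}(m-p)\,a_{3/2}(k-m)\,a_{3/2}(l-p)| \leq \frac{C}{d}\brac{\sum_{m=-\floor{dx}+1}^{\floor{dx}-1}|a_{3/2}(m)|^{\frac{4}{3}}}^3,
\end{align*}
which is the exact discrete Young-type estimate from \cite[Pages 134--135]{NP} and requires no modification. Dividing both sides by $(\ln d)^2$ produces the bound as stated.

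The main (and essentially the only) obstacle is verifying that none of the estimates from \cite[Lemmas 3.1, 3.2, 5.1]{HN} degenerate at the endpoint $\alpha=\frac{3}{2}$. Since those results hold for all $\alpha\in(0,2\beta]$ with $\beta\in(0,1)$, the value $\alpha=\frac{3}{2}$ is admissible whenever $\beta\geq \frac{3}{4}$, so the pointwise comparison goes through with the same constants. Note that the extra $\ln d$ factor in the normalization plays no role in this lemma itself: it is simply carried along and will absorb the logarithmic divergence of $\sum_m |a_{3/2}(m)|^{4/3}$ in the subsequent estimate in Step 1 of the proof of Theorem \ref{theoremthreehalf}.
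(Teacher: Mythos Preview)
Your proposal is correct and follows exactly the approach the paper takes: the paper's entire proof of this lemma is the single sentence ``The proof follows in the exact same way as the proof of Lemma \ref{fbmcomparisonLemma},'' and your write-up is precisely that argument spelled out for $\alpha=\frac{3}{2}$. Your explicit remark that the side condition $\alpha+\nu>2$ is automatic here (since $\nu>1$ by \textbf{(H.2)}) is a nice clarification that the paper leaves implicit.
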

	\begin{proof}
		The proof follows in the exact same way as the proof of Lemma \ref{fbmcomparisonLemma}.
	\end{proof}
	\begin{Lemma}
		\label{varianceLemmathreehalf}
		Denote by $\rho^2_{\floor{dx}}$ the variance of the non-diagonal entries of $\widetilde{\W}_{n,\floor{dx}}$. Then,
		\begin{equation*}
			\rho^2_{\floor{dx}}=\frac{1}{d\ln{d}}\sum_{k,l=1}^{\floor{dx}}\abs{\delta_{kl}}^2
		\end{equation*}
		and
		\begin{equation*}
			\rho^2=\lim_{d\to\infty} \rho^2_{\floor{dx}}= \frac{9x}{32}.
		\end{equation*}
	\end{Lemma}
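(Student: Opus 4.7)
The plan is to mirror the strategy used for Lemma \ref{varianceLemma} and Remark \ref{remarkafterlemma5}, but with the $\frac{1}{\sqrt{d\ln d}}$ normalization and the sharper asymptotic $a_{3/2}(m)^2 \sim \frac{9}{64}m^{-1}$ driving the behavior. First, I would establish the closed form $\rho^2_{\floor{dx}}=(d\ln d)^{-1}\sum_{k,l=1}^{\floor{dx}}\delta_{kl}^2$ by directly repeating the computation of Lemma \ref{covarianceLemmanormal}. Indeed, $\widetilde{W}_{il}=I_2(\widetilde{f}_{il})$ with $\widetilde{f}_{il}=\frac{1}{2\sqrt{d\ln d}}\sum_{k=1}^{\floor{dx}}(e_{ik}\otimes e_{lk}+e_{lk}\otimes e_{ik})$ for $i\neq l$, and the second-moment computation reduces, via the isometry $\E{I_2(f)^2}=2\|\widetilde{f}\|_{\mathfrak{H}^{\otimes 2}}^2$ and $\inner{e_{ik},e_{ij}}_\mathfrak{H}=\delta_{kj}$, $\inner{e_{ik},e_{lj}}_\mathfrak{H}=0$ for $i\neq l$, to the announced identity.

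The main task is the limit. I would split the sum according to $\sum_{k,l=1}^{\floor{dx}}\delta_{kl}^2=\floor{dx}+2\sum_{m=1}^{\floor{dx}-1}\sum_{k=1}^{\floor{dx}-m}\delta_{k,k+m}^2$, and then note that the diagonal contribution $\floor{dx}/(d\ln d)\to 0$. For the off-diagonal, I would invoke the sharp bounds of \cite[Lemma 3.1, 3.2]{HN}, which, in the regime relevant here, give the asymptotic $\delta_{k,k+m}^2=\left(\frac{k}{k+m}\right)^{2(2\beta-3/2)}a_{3/2}(m)^2(1+o(1))$, uniformly over $k,m\to\infty$, with negligible boundary contributions when $m$ is small or when $k$ is close to $m$ (these are controlled exactly as in the proof of \cite[Lemma 5.1]{HN}).

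Combining these with the key expansion $a_{3/2}(m)^2=\frac{9}{64}m^{-1}+o(m^{-1})$, the problem reduces to showing
\begin{equation*}
\frac{2}{d\ln d}\sum_{m=1}^{\floor{dx}-1}\frac{9}{64m}\sum_{k=1}^{\floor{dx}-m}\left(\frac{k}{k+m}\right)^{2(2\beta-3/2)}\longrightarrow \frac{9x}{32}.
\end{equation*}
I would handle this by fixing $\varepsilon>0$ and splitting the outer sum at $m=\floor{d\varepsilon}$. On $1\le m\le\floor{d\varepsilon}$, a Taylor expansion of $(1-m/(k+m))^{2\beta-3/2}$ (as done for $A_2$ in Step 2 of the proof of Theorem \ref{theoremcentralconvergence}) shows the inner sum equals $\floor{dx}-m+O(m\ln(\floor{dx}/m))=\floor{dx}(1+o(1))$, yielding a contribution $\sim\frac{2\floor{dx}}{d\ln d}\cdot\frac{9}{64}\ln(d\varepsilon)\to\frac{9x}{32}$. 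On $m>\floor{d\varepsilon}$, the crude bound $\left(\frac{k}{k+m}\right)^{2(2\beta-3/2)}\le 1$ gives a contribution of order $\frac{dx}{d\ln d}\sum_{m>d\varepsilon}m^{-1}=O(1/\ln d)$, which is $o(1)$. Sending $\varepsilon\to 0$ at the end identifies the limit as $\frac{9x}{32}$.

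The main obstacle will be the uniform control of the error terms $o(m^{-1})$ in the expansion of $a_{3/2}(m)^2$ and of the $(1+o(1))$ term in $\delta_{k,k+m}^2$ when summed against the logarithmically-divergent harmonic sum; here one must verify that these remainders multiplied by $m^{-1}$ aggregate to something of order $o(\ln d)$, so that they vanish after division by $\ln d$. The same $\varepsilon$-splitting argument as above, combined with the uniform estimates from \cite[Lemma 3.1, 3.2]{HN}, is what makes this work, and the argument is essentially identical in spirit to the one already presented for $A_3$ and $A_4$ in the proof of Theorem \ref{theoremthreehalf}.
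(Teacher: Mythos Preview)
Your proposal is correct and follows essentially the same approach as the paper's proof, which also reduces (via the estimates of \cite[Lemmas 3.1, 3.2, 5.1]{HN}) to the main term $\frac{1}{d\ln d}\sum_{m}\sum_{k}\bigl(\tfrac{k}{k+m}\bigr)^{2\beta-3/2}a_{3/2}^2(m)$ and then shows that replacing $\bigl(\tfrac{k}{k+m}\bigr)^{2\beta-3/2}$ by $1$ costs only $o(1)$ --- the paper does this via the pair $D_d^*,\,r_d$ rather than your $\varepsilon$-splitting in $m$, but the two organizations are equivalent. One small correction: the exponent on $k/(k+m)$ in your expression for $\delta_{k,k+m}^2$ should be $2\beta-\tfrac{3}{2}$ (that is, $2\beta-\alpha$), not $2(2\beta-\tfrac{3}{2})$; this does not affect the argument since the factor is in $(0,1]$ and tends to $1$.
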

	\begin{proof}
		We adapt the ideas in \cite[Proof of Lemma 5.1]{HN} which does not cover the case $\alpha=\frac{3}{2}$.
		\begin{proofstep}
			Define $\xi_{j,d}=\norm{\Delta X_{\frac{j}{d}}}_{L^2(\Omega)}$. Choose $\gamma\in (0,\frac{1}{2})$ and let $\tau=(\floor{dx})^\gamma$. We will perform the decomposition
			\begin{equation*}
				\rho^2_{\floor{dx}}=\frac{1}{d\ln{d}}\sum_{k,l=1}^{\floor{dx}}\abs{\delta_{kl}}^2=A_{1,d}+A_{2,d}+A_{3,d}+A_{4,d},
			\end{equation*}
			where
			\begin{align*}
				A_{1,d}&=\frac{1}{d\ln{d}}\sum_{j\in D_1,\ k\in D_1}\xi_{j,d}^{-2}\xi_{k,d}^{-2}\brac{\E{\Delta X_{\frac{j}{d}}\Delta X_{\frac{k}{d}}}}^{2},\\
				A_{2,d}&=\frac{1}{d\ln{d}}\sum_{j\in D_2,\ k\in D_2}\xi_{j,d}^{-2}\xi_{k,d}^{-2}\brac{\E{\Delta X_{\frac{j}{d}}\Delta X_{\frac{k}{d}}}}^{2},\\
				A_{3,d}&=\frac{1}{d\ln{d}}\sum_{j\in D_1,\ k\in D_2}\xi_{j,d}^{-2}\xi_{k,d}^{-2}\brac{\E{\Delta X_{\frac{j}{d}}\Delta X_{\frac{k}{d}}}}^{2},\\
				A_{4,d}&=\frac{1}{d\ln{d}}\sum_{j\in D_2,\ k\in
					D_1}\xi_{j,d}^{-2}\xi_{k,d}^{-2}(\E{\Delta
					X_{\frac{j}{d}}\Delta X_{\frac{k}{d}}})^{2},
			\end{align*}
			with
			\begin{align*}
				D_1&=\{l\colon 1\leq l\leq  \tau \wedge \floor{dx} \}\\
				D_2&=\{l\colon \tau< l\leq  \floor{dx} \}.
			\end{align*}
			$A_{1,d}$ is bounded by $Cd^{2\gamma-1}(\ln{d})^{-1}$ by the
			Cauchy-Schwarz inequality, so that it converges to zero as $d$
			goes to infinity. 
		\end{proofstep}
		\begin{proofstep}
			We further decompose and bound $A_{2,d}$ and
			$A_{4,d}$. $A_{3,d}$ can be bounded in the same way as
			$A_{4,d}$. We have
			\begin{align*}
				A_{2,d}&= \frac{1}{d\ln{d}}\brac{\floor{dx}
					-\tau}+\frac{1}{d\ln{d}}\sum_{j,k\in
					D_2 \colon \abs{j-k}=1}\xi_{j,d}^{-2}\xi_{k,d}^{-2}\brac{\E{\Delta X_{\frac{j}{d}}\Delta X_{\frac{k}{d}}}}^{2}\\
				&\quad +\frac{1}{d\ln{d}}\sum_{j,k\in D_2\colon \abs{j-k}\geq 2}\xi_{j,d}^{-2}\xi_{k,d}^{-2}\brac{\E{\Delta X_{\frac{j}{d}}\Delta X_{\frac{k}{d}}}}^{2}\\
				&=B_{1,d}^{(1)}+B_{2,d}^{(1)}+B_{3,d}^{(1)}
			\end{align*}
			and
			\begin{align*}
				A_{4,d}&= \frac{1}{d\ln{d}}\xi_{\floor{\tau},d}^{-2}\xi_{\ceil{\tau},d}^{-2}\brac{\E{\Delta X_{\frac{\floor{\tau}}{d}}\Delta X_{\frac{\ceil{\tau}}{d}}}}^{2}\\
				&\quad +\frac{1}{d\ln{d}}\sum_{j\in D_2,\ k\in D_1\colon \abs{j-k}\geq 2}\xi_{j,d}^{-2}\xi_{k,d}^{-2}\brac{\E{\Delta X_{\frac{j}{d}}\Delta X_{\frac{k}{d}}}}^{2}\\
				&=B_{1,d}^{(2)}+B_{2,d}^{(2)}.
			\end{align*}
			$B_{1,d}^{(1)}$ clearly goes to $0$ as $d\to\infty$. $B_{1,d}^{(2)}$
			and $B_{2,d}^{(1)}$ also go to $0$ by the Cauchy-Schwarz
			inequality. For $B_{2,d}^{(1)}$ in particular, we can write
			\begin{align*}
				\frac{1}{d\ln{d}}\sum_{j,k\in D_2 \colon\abs{j-k}=1}\xi_{j,d}^{-2}\xi_{k,d}^{-2}\brac{\E{\Delta X_{\frac{j}{d}}\Delta X_{\frac{k}{d}}}}^{2}\leq \frac{1}{d\ln{d}}\brac{ \floor{dx} -\tau}.
			\end{align*}
		\end{proofstep}
		\begin{proofstep}
			In this step, we argue that all terms with $k<\floor{\frac{j}{3}}$
			have no contribution to $\rho^2$ as $d\to\infty$. The
			case $j\leq\floor{\frac{k}{3}}$ can be treated similarly. \cite[Lemma 5.1]{HN} gives the bound for $k<\floor{\frac{j}{3}}$
			\begin{align*}
				\brac{\E{\Delta X_{\frac{j}{d}}\Delta X_{\frac{k}{d}}}}^2 \leq Cd^{-4\beta}k^{4\beta-3}(j-k)^{-1}.
			\end{align*}
			Meanwhile, \cite[Lemma 3.1]{HN} states that
			\begin{align*}
				\xi_{j,d}^2 &=  2\lambda j^{2\beta-\frac{3}{2}}d^{-2\beta}(1+\eta_{j,d})\\
				\xi_{k,d}^2 &= 2\lambda k^{2\beta-\frac{3}{2}}d^{-2\beta}(1+\eta_{k,d}),
			\end{align*}
			such that $\eta_{j,d}\leq Cd^{-\gamma d}$ and $\eta_{k,d}\leq
			Cd^{-\gamma d}$. Hence, 
			\begin{align*}
				\frac{1}{d\ln{d}}\sum_{j=3}^{\floor{dx}}\sum_{k=1}^{\floor{\frac{j}{3}}}\xi_{j,d}^{-2}\xi_{k,d}^{-2}\brac{\E{\Delta X_{\frac{j}{d}}\Delta X_{\frac{k}{d}}}}^{2}&\leq \frac{C}{d\ln{d}}\sum_{j=3}^{\floor{dx}}\sum_{k=1}^{\floor{\frac{j}{3}}}(j-k)^{-1}\\&\leq 
				\frac{C}{d\ln{d}}\sum_{j=3}^{\floor{dx}}\int_1^{\floor{\frac{j}{3}}}(j-y)^{-1}dy
				\\&\leq \frac{C}{d\ln{d}}\sum_{j=3}^{\floor{dx}} \ln{\brac{1-\floor{1/3}}}\\
				&\leq \frac{C}{\ln d}.
			\end{align*}
		\end{proofstep}
		\begin{proofstep}
			In this step, we study those terms which belongs to $B_{3,d}^{(1)}$
			and $B_{2,d}^{(2)}$ and were not considered in Step $3$. In the case
			$k\leq j-2$, we use the covariance representation from \cite[Lemma 3.1
			and Lemma 3.2]{HN} in order to get
			\begin{align*}
				C_d&=\frac{1}{d\ln{d}}\sum_{\substack{j\in D_2\\\floor{\frac{j}{3}}\leq k\leq j-2}}\xi_{j,d}^{-2}\xi_{k,d}^{-2}\brac{\E{\Delta X_{\frac{j}{d}}\Delta X_{\frac{k}{d}}}}^{2}\\
				&= \frac{1}{d\ln{d}}\sum_{\substack{j\in D_2\\\floor{\frac{j}{3}}\leq k\leq j-2}}\brac{\brac{\frac{k}{j}}^{\beta-3/4}a_\frac{3}{2}(j-k)+R_{j,k}}^2\\
				&=\frac{1}{d\ln{d}}\sum_{\substack{j\in D_2\\\floor{\frac{j}{3}}\leq k\leq j-2}}\brac{\frac{k}{j}}^{2\beta-3/2}a^2_\frac{3}{2}(j-k)\\
				&\quad + \brac{\frac{2}{d\ln{d}}\sum_{\substack{j\in D_2\\\floor{\frac{j}{3}}\leq k\leq j-2}}\brac{\frac{k}{j}}^{\beta-3/4}a_\frac{3}{2}(j-k)R_{j,k}+R_{j,k}^2}\\
				&= D_d+O_d,
			\end{align*}
			where according to \cite[Lemma 3.2]{HN} and the fact that $\frac{j}{3}\leq k\leq j-2 \Rightarrow j-k\leq 2k$,
			\begin{equation*}
				R_{j,k}\leq C\brac{\frac{k}{j}}^{\beta-3/4}k^{-1}(j-k-1)^{\frac{1}{2}}+C\brac{\frac{k}{j}}^{\beta-3/4}k^{-\frac{1}{2}}
				\leq Ck^{-\frac{1}{2}}.
			\end{equation*}
			Now since $a_\alpha(m)=\frac{3}{8}m^{-1/2}+o\brac{m^{-1/2}}$, $O_d$ can be bounded via
			\begin{align*}
				O_d &\leq \frac{C}{d\ln{d}}\sum_{j=\ceil{\tau}}^{\floor{dx}}\sum_{k=\ceil{\frac{j}    {3}}}^{j-2} (j-k)^{-\frac{1}{2}}k^{-\frac{1}{2}}\\
				&\leq \frac{C}{d\ln{d}}\sum_{k=1}^{\floor{dx}-3}\sum_{j= k+2}^{3k} (j-k)^{-\frac{1}{2}}k^{-\frac{1}{2}}\leq \frac{C}{d},
			\end{align*}
			which vanishes as $d\to\infty$.
			
		\end{proofstep}
		\begin{proofstep}
			The last term $D_d$ is the only one with a non-trivial contribution to
			$\rho^2_{\floor{dx}}$ as $d\to\infty$. We will
			show that
			\begin{align}
				\label{limitthreehalf}
				\lim_{d\to\infty} D_d=\lim_{d\to\infty}\frac{1}{d\ln{d}}\sum_{j=3}^{\floor{dx}}\sum_{k=1}^{j-2}\brac{\frac{k}{j}}^{2\beta-\frac{3}{2}}a^2_{\frac{3}{2}}(j-k)= \frac{9x}{64}.
			\end{align}
			Since $a_{\frac{3}{2}}(m)=
			\frac{3}{8}m^{-\frac{1}{2}}+\delta_{\frac{3}{2}}(m)$, where $\delta_{\frac{3}{2}}(m)=o(m^{-\frac{1}{2}})$, it follows that
			\begin{align*}
				D_d^*&=\lim_{d\to\infty}\frac{1}{d\ln{d}}\sum_{j=3}^{\floor{dx}}\sum_{k=1}^{j-2}a^2_{\frac{3}{2}}(j-k)=\lim_{d\to\infty}\frac{1}{d\ln{d}}\sum_{m=j-k=1}^{\floor{dx}-1}\sum_{k=1}^{\floor{dx}-m-1}a^2_{\frac{3}{2}}(m)\\
				&=\lim_{d\to\infty}\frac{1}{\ln{d}}\sum_{m=1}^{\floor{dx}-1}\frac{\floor{dx}-m-1}{d}\brac{\frac{9}{64}m^{-1}+\frac{3}{4}m^{-\frac{1}{2}}\delta_{\frac{3}{4}}(m)+\delta^2_{\frac{3}{2}}(m)}\\
				&=\lim_{d\to\infty}\frac{9}{64\ln{d}}\sum_{m=1}^{\floor{dx}-1}\frac{\floor{dx}-m-1}{d}m^{-1}+E_d.
			\end{align*}
			The fact that $\lim_{m\to\infty}\frac{\delta_{3/2}(m)}{m^{-1/2}}=0$
			implies that for $\epsilon<1$, there exists $M_\epsilon\in \N$ such that
			$\delta_{3/2}(m)\leq \epsilon m^{-1/2}$ for all $m\geq M_\epsilon$. In
			addition, this means that $\delta_{3/2}(m)$ is bounded by some constant $C>1$. Hence,
			\begin{align*}
				&E_d=\lim_{d\to\infty}\frac{1}{\ln{d}}\sum_{m=1}^{\floor{dx}-1}\frac{\floor{dx}-m-1}{d}\brac{\frac{3}{4}m^{-\frac{1}{2}}\delta_{\frac{3}{2}}(m)+\delta^2_{\frac{3}{2}}(m)}\\
				&\leq \lim_{d\to\infty}\frac{1}{\ln{d}}\sum_{m=1}^{M_\epsilon-1}\brac{\frac{3}{4}m^{-\frac{1}{2}}\delta_{\frac{3}{2}}(m)+\delta^2_{\frac{3}{2}}(m)}
				+\lim_{d\to\infty}\frac{1}{\ln{d}}\sum_{m=M_\epsilon}^{\floor{dx}-1}\brac{\frac{3}{4}m^{-\frac{1}{2}}\delta_{\frac{3}{2}}(m)+\delta^2_{\frac{3}{2}}(m)}\\
				&\leq  \lim_{d\to\infty}\frac{1}{\ln d}\sum_{m=1}^{M_\epsilon-1}\brac{\frac{3}{4}m^{-\frac{1}{2}}\delta_{\frac{3}{2}}(m)+\delta^2_{\frac{3}{2}}(m)}
				+\lim_{d\to\infty}\frac{1}{\ln{d}}\sum_{m=M_\epsilon}^{\floor{dx}-1}\brac{\frac{3}{4}\epsilon m^{-1}+\epsilon^2 m^{-1}}\\
				&\leq  \lim_{d\to\infty}\frac{3CM_\epsilon}{\ln d}
				+\lim_{d\to\infty}\frac{2\epsilon}{\ln{d}}\sum_{m=M_\epsilon}^{\floor{dx}-1}m^{-1}.
			\end{align*}
			The first term in the above inequality is clearly $0$. For the second
			term, observe that
			\begin{align*}
				\lim_{d\to\infty}\frac{1}{\ln d}\sum_{m=1}^{\floor{dx}-1} m^{-1}=1,
			\end{align*}
			so that $\frac{1}{\ln d}\sum_{m=1}^{\floor{dx}-1} m^{-1}$
			is uniformly bounded by some constant $C_1$ for all $d \geq 1$. We
			hence deduce that, for all $d \geq 1$, $E_d\leq \epsilon C_1$
			which holds for all $\epsilon<1$. This implies $E_d=0$ and
			\begin{align*}
				D_d^*&=\lim_{d\to\infty}\frac{9}{64\ln{d}}\sum_{m=1}^{\floor{dx}-1}\frac{\floor{dx}-m-1}{d}m^{-1}\\
				&=\frac{9x}{64}\lim_{d\to\infty}\frac{1}{\ln{d}}\sum_{m=1}^{\floor{dx}-1}m^{-1}-\frac{9}{64}\lim_{d\to\infty}\frac{1}{d\ln{d}}\sum_{m=1}^{\floor{dx}-1}(1+m^{-1})
				=\frac{9x}{64}.
			\end{align*}
			Now, if $r_d=\abs{D_d^*-D_d}$ and $\lim_{d\to\infty}r_d= 0$, then \eqref{limitthreehalf} holds. To this end we have
			\begin{align*}
				r_d&=\frac{1}{d\ln{d}}\sum_{j=3}^{\floor{dx}}\sum_{k=1}^{j-2}\brac{1-\brac{\frac{k}{j}}^{2\beta-\frac{3}{2}}}a^2_{\frac{3}{2}}(j-k)\\
				&\leq \frac{1}{d\ln{d}}\sum_{m=1}^{\floor{dx}-1}a^2_{\frac{3}{2}}(m)\sum_{k=1}^{\floor{dx}-m-1}\brac{1-\brac{1-\frac{m}{k+m}}^{2\beta-\frac{3}{2}}}.
			\end{align*}
			Note that as $x\to\infty$, $\ln{\abs{x}}\leq \abs{x}^\zeta$ for any
			positive value of $\zeta$. Also, $a_{\frac{3}{2}}(m)\leq \frac{3}{4}m^{-\frac{1}{2}}$ as $m\to\infty$. Thus, a Taylor expansion of $(1-x)^p$ for $p>0$ and $0\leq x<1$ yields
			\begin{align*}
				r_d&\leq \frac{C}{d\ln{d}}\sum_{m=1}^{\floor{dx}-1}m^{-1}\sum_{k=1}^{\floor{dx}-m-1}\frac{m}{k+m}\\
				&\leq \frac{C}{d\ln d}\sum_{m=1}^{\floor{dx}-1}m^{-1}\int_1^{\floor{dx}-m-1}\frac{m}{y+m-1}dy\\
				&=   \frac{C}{d\ln d}\sum_{m=1}^{\floor{dx}-1} \ln{\frac{\floor{dx}}{m}}\\
				&\leq   \frac{C}{d\ln d}\sum_{m=1}^{\floor{dx}-1} \brac{\frac{\floor{dx}}{m}}^\zeta
				\leq \frac{C}{\ln d},
			\end{align*}
			which implies $\lim_{d\to\infty}r_d= 0$. Finally, combining the cases $k\leq j$ and $j<k$ yields 
			\begin{align*}
				\lim_{d\to\infty} \rho^2_{\floor{dx}}= \frac{9x}{32}.
			\end{align*}
		\end{proofstep}
	\end{proof}
	
	\subsection{Lemmas related to the proof of Theorem
		\ref{theoremrosenblatt} in Subsection \ref{proofoftheorosenblatt}}
	\begin{Lemma}
		\label{covarianceLemmarosenblatt}
		The covariance structure of $X$ can be written as a double integral as
		\begin{equation*}
			\E{(X_a-X_{a-\epsilon})(X_b-X_{b-\delta})}=\int_{b-\delta}^{b}\int_{a-\epsilon}^{a}\partial_{s,t}\E{X_s X_t}dsdt.
		\end{equation*}   
		Moreover, whenever $\frac{s\wedge t}{s\vee t}\leq \frac{1}{2}$, the
		following bound holds  
		\begin{equation*}    
			\abs{\partial_{s,t}\E{X_s X_t}}\leq C\brac{s\wedge t}^{2\beta-\alpha}(s\vee t)^{\alpha-2},
		\end{equation*}
		and whenever $\frac{1}{2}< \frac{s\wedge t}{s\vee t}< 1$, we have
		\begin{equation*}    
			\abs{\partial_{s,t}\E{X_s X_t}}\leq C\brac{s\wedge t}^{2\beta-\alpha}\abs{s-t}^{\alpha-2}.
		\end{equation*}
	\end{Lemma}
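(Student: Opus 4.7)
The proof splits into two independent pieces, which I would handle in turn.

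For the integral representation, the plan is to expand the left-hand side by bilinearity of the covariance as
\begin{equation*}
\E{(X_a-X_{a-\epsilon})(X_b-X_{b-\delta})} = \E{X_aX_b} - \E{X_aX_{b-\delta}} - \E{X_{a-\epsilon}X_b} + \E{X_{a-\epsilon}X_{b-\delta}},
\end{equation*}
and then recognize this four-point increment as the double integral of the mixed partial derivative, provided that $(s,t)\mapsto \E{X_sX_t}$ is $C^2$ on a neighborhood of the rectangle $[a-\epsilon,a]\times[b-\delta,b]$. This smoothness follows from the scaling identity $\E{X_sX_t}=s^{2\beta}\phi(t/s)$ (for $s\le t$) together with the twice-differentiability of $\phi$ on an open neighborhood of $[1,\infty)$ postulated in \textbf{(H.1)}. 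Two applications of the fundamental theorem of calculus then give the claimed representation.

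For the bounds on $\partial_{s,t}\E{X_sX_t}$, I would assume without loss of generality that $s\le t$ and differentiate the scaling identity, obtaining
\begin{equation*}
\partial_{s,t}\E{X_sX_t} = (2\beta-1)s^{2\beta-2}\phi'(t/s) - s^{2\beta-3}\, t\, \phi''(t/s).
\end{equation*}
From this identity the two cases correspond to estimating $\phi'$ and $\phi''$ in the appropriate regime of their argument $t/s$.

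In the regime $s/t\le 1/2$, equivalently $t/s\ge 2$, I would apply \textbf{(H.2)}: for $\alpha\ge 1$, $|\phi'(t/s)|\le C(t/s-1)^{\alpha-2}\le C(t/s)^{\alpha-2}$ and $|\phi''(t/s)|\le C(t/s)^{\alpha-3}$. Substituting, each of the two summands becomes $Cs^{2\beta-\alpha}t^{\alpha-2}$, yielding the first bound. For $\alpha<1$, the same scheme works using the $(t/s-1)^{-\nu}$ and $(t/s-1)^{-\nu-1}$ decay rates from \textbf{(H.2)} and comparing them to $(t/s)^{\alpha-2}$ on $t/s \geq 2$. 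In the regime $1/2<s/t<1$, equivalently $1<t/s<2$, I would instead invoke \textbf{(H.1)}: writing $\phi'(x)=-\lambda\alpha(x-1)^{\alpha-1}+\psi'(x)$ and using the uniform bound $|\psi'(x)|\le Cx^{\alpha-1}$ on $[1,2]$ shows the singular term dominates near $x=1$, so $|\phi'(t/s)|\le C(t/s-1)^{\alpha-1}$; similarly $|\phi''(t/s)|\le C(t/s-1)^{\alpha-2}$ via part (b). Since $t/s-1=(t-s)/s$ and $t\le 2s$ in this regime, substitution and simplification show that both summands are dominated by $Cs^{2\beta-\alpha}|t-s|^{\alpha-2}$, giving the second bound.

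The main technical obstacle I anticipate lies in the second regime, where one must carefully verify that the singular $-\lambda(x-1)^{\alpha}$ contribution to $\phi$ dominates the perturbation $\psi$ when differentiated twice near $x=1$. This requires combining the $\psi'$ and $\psi''$ control provided by (a) and (b) of \textbf{(H.1)} with the absorption of the extra $t$-factor via $t\le 2s$, and similarly ensuring that the factor $(t-s)/s<1$ is used to upgrade the weaker $(t-s)^{\alpha-1}$ term coming from $\phi'$ to the claimed $(t-s)^{\alpha-2}$ rate. The integral representation is routine by comparison, once the $C^2$ regularity of the covariance off the diagonal is noted.
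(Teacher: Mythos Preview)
Your proposal is correct and follows essentially the same route as the paper: expand the covariance of increments by bilinearity and recognize the four-point expression as a double integral of the mixed partial, then differentiate the self-similarity identity $\E{X_sX_t}=s^{2\beta}\phi(t/s)$ and estimate $\phi',\phi''$ using \textbf{(H.2)} when $t/s\ge 2$ and \textbf{(H.1)} when $1<t/s<2$. Your write-up is in fact slightly more careful than the paper's in a couple of places (you keep the extra factor of $t$ in the $\phi''$ term of the derivative formula, and you explicitly justify why the $\psi$-perturbation is dominated near $x=1$), and your remark on the $\alpha<1$ branch of \textbf{(H.2)} is superfluous here since the lemma is only invoked for $\alpha>\tfrac{3}{2}$, but none of this changes the argument.
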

	\begin{proof}
		The first assertion can be deduced from writing
		\begin{align*}
			\E{(X_a-X_{a-\epsilon})(X_b-X_{b-\delta})}&= \E{X_aX_b}-\E{X_{a-\epsilon}X_b}-\E{X_aX_{b-\delta}}+\E{X_{a-\epsilon}X_{b-\delta}}\\
			&=\int_{b-\delta}^{b}\int_{a-\epsilon}^{a}\partial_{s,t}\E{X_s X_t}dsdt.
		\end{align*}
		The bounds on $\abs{\partial_{s,t}\E{X_s X_t}}$ are consequences of
		hypotheses \textbf{(H.1)} and \textbf{(H.2)} as we will show next. Without loss of
		generality, let's assume $s\leq t$. In the case $\frac{s}{t}\leq
		\frac{1}{2}$, \textbf{(H.2)} implies that
		\begin{align*}
			\partial_{s,t}\E{X_s X_t}
			&=(2\beta-1)s^{2\beta-2}\phi'\brac{\frac{t}{s}}-s^{2\beta-3}\phi''\brac{\frac{t}{s}}\\
			&\leq  Cs^{2\beta-2}\brac{\frac{t}{s}}^{\alpha-2}+Cs^{2\beta-3}\brac{\frac{t}{s}}^{\alpha-3}\\
			&\leq  Cs^{2\beta-\alpha}t^{\alpha-2}.
		\end{align*}
		Meanwhile, whenever $\frac{1}{2}<\frac{s}{t}\leq 1$, \textbf{(H.1)} implies that
		\begin{align*}
			\partial_{s,t}\E{X_s X_t}&=(1-2\beta)\lambda\alpha
			x^{2\beta-2}\brac{\frac{t}{s}-1}^{\alpha-1}+\lambda
			\alpha(\alpha-1)s^{2\beta-3}t\brac{\frac{t}{s}-1}^{\alpha-2}\\
			&\qquad\qquad\qquad\qquad\qquad\qquad\qquad\qquad\qquad +(2\beta-1)s^{2\beta-2}\phi'\brac{\frac{t}{s}}-x^{2\beta-3}t\phi''\brac{\frac{y}{s}}\\
			&\leq C s^{2\beta-2}\brac{\frac{t}{s}-1}^{\alpha-1}+Cs^{2\beta-3}t\brac{\frac{t}{s}-1}^{\alpha-2}+Cs^{2\beta-1}t^{\alpha-1}+Cs^{2\beta-2}\brac{\frac{t}{s}-1}^{\alpha-1}\\
			&\leq Cs^{2\beta-3}t\brac{\frac{t}{s}-1}^{\alpha-2}\\
			&=  Cs^{2\beta-\alpha-1}t(t-s)^{\alpha-2}\\
			&\leq C\brac{s\wedge t}^{2\beta-\alpha}\abs{s-t}^{\alpha-2}.
		\end{align*}
	\end{proof}
	\begin{Lemma}
		\label{mvtconsequence}
		Assuming the integrals appearing on the right hand sides of the
		equalities below are well defined, it holds that
		\begin{equation*}
			\lim_{d,p\to \infty} dp \sum_{k=0}^{d-1}\sum_{j=0}^{p-1}g\left(\frac{k}{d},\frac{j}{p}\right)\left[\int_{\frac{k}{d}}^{\frac{k+1}{d}} \int_{\frac{js}{p}}^{\frac{j+1}{p}}f(u,v)dudv  \right]^2
			=\int_{0}^{1}\int_{0}^{1}g(u,v)f(u,v)^2dudv
		\end{equation*}
		as well as 
		\begin{equation*}
			\lim_{p\to \infty} dp \sum_{k=0}^{d-1}\sum_{j=0}^{p-1}g\left(\frac{k}{d},\frac{j}{p}\right)\left[\int_{\frac{k}{d}}^{\frac{k+1}{d}} \int_{\frac{j}{p}}^{\frac{j+1}{p}}f(u,v)dudv\right]^2
			=d \sum_{k=0}^{d-1}\int_{0}^{1}g\brac{\frac{k}{d},v} \brac{\int_{\frac{k}{d}}^{\frac{k+1}{d}}f(u,v)du}^2dv.
		\end{equation*}
	\end{Lemma}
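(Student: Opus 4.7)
The plan is to reduce both identities to standard Riemann sum convergence after absorbing the double integrals into pointwise evaluations via the integral mean value theorem. Throughout, I will assume $f$ and $g$ are continuous on $[0,1]^2$ (which is the level of regularity at which the lemma is invoked in the proof of Theorem \ref{theoremrosenblatt}; the covariance derivatives appearing there are continuous away from the diagonal, and integrability on $[0,1]^2$ has already been verified in the preceding estimates).

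\textbf{Part 1.} Applying the integral mean value theorem to the continuous function $f$ on each rectangle $R_{k,j}=[k/d,(k+1)/d]\times[j/p,(j+1)/p]$, I can find $(u_{k,j}^\ast,v_{k,j}^\ast)\in R_{k,j}$ with
\begin{equation*}
\int_{k/d}^{(k+1)/d}\!\!\int_{j/p}^{(j+1)/p}f(u,v)\,du\,dv \;=\; \frac{1}{dp}\,f(u_{k,j}^\ast,v_{k,j}^\ast).
\end{equation*}
Substituting this into the left-hand side of the first identity, the prefactor $dp$ cancels one power of $(dp)^{-2}$ coming from the square, leaving
\begin{equation*}
\frac{1}{dp}\sum_{k=0}^{d-1}\sum_{j=0}^{p-1} g\!\left(\tfrac{k}{d},\tfrac{j}{p}\right)f(u_{k,j}^\ast,v_{k,j}^\ast)^{2}.
\end{equation*}
Since $(u_{k,j}^\ast,v_{k,j}^\ast)$ lies within $O(d^{-1}+p^{-1})$ of $(k/d,j/p)$ and $f$ is uniformly continuous, replacing $f(u_{k,j}^\ast,v_{k,j}^\ast)$ by $f(k/d,j/p)$ introduces an error tending to $0$ uniformly in $(k,j)$ as $d,p\to\infty$. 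What remains is the standard Riemann sum for $g\cdot f^{2}$ on the unit square, which converges to $\int_{0}^{1}\int_{0}^{1}g(u,v)f(u,v)^{2}\,du\,dv$ by continuity of the integrand.

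\textbf{Part 2.} Here only $p\to\infty$ and $d$ is fixed, so the MVT is applied only in the $v$-variable: by Fubini and the mean value theorem for integrals (in $v$), there exists $v_{k,j}^\ast\in[j/p,(j+1)/p]$ such that
\begin{equation*}
\int_{k/d}^{(k+1)/d}\!\!\int_{j/p}^{(j+1)/p}f(u,v)\,du\,dv \;=\; \frac{1}{p}\int_{k/d}^{(k+1)/d} f(u,v_{k,j}^\ast)\,du.
\end{equation*}
Plugging this in and squaring converts the prefactor $dp$ to $d/p$, giving
\begin{equation*}
d\sum_{k=0}^{d-1}\left[\frac{1}{p}\sum_{j=0}^{p-1}g\!\left(\tfrac{k}{d},\tfrac{j}{p}\right)\left(\int_{k/d}^{(k+1)/d}f(u,v_{k,j}^\ast)\,du\right)^{2}\right].
\end{equation*}
For each fixed $k$, the inner expression in brackets is (up to the error from replacing $v_{k,j}^\ast$ by $j/p$, which is $O(1/p)$ uniformly by continuity of $g$ and of $v\mapsto\int_{k/d}^{(k+1)/d}f(u,v)\,du$) a Riemann sum on $[0,1]$ for the integrand $v\mapsto g(k/d,v)(\int_{k/d}^{(k+1)/d}f(u,v)\,du)^{2}$. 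As $p\to\infty$ each of the $d$ bracketed terms converges to the corresponding integral, producing the right-hand side.

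The only genuine obstacle is the uniform continuity/bounded convergence step in each part, but since the lemma is applied with $f(s,t)=\partial_{s,t}\E{X_sX_t}$ and $g(s,t)=(st)^{\alpha-2\beta}$ on subrectangles of $[a,b]^{2}$ where these functions are continuous and bounded (and integrability of $g\cdot f^{2}$ has been verified within the proof of Theorem \ref{theoremrosenblatt}), this is routine. The rest is bookkeeping: careful application of the integral mean value theorem followed by recognition of Riemann sums.
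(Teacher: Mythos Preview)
Your proof is correct and follows exactly the approach the paper takes: the paper's own proof is the single sentence ``Both limits follow directly from the mean value theorem,'' and your argument is simply a careful unpacking of that sentence via the integral mean value theorem followed by Riemann sum recognition. Your added remarks about continuity and the specific functions to which the lemma is applied are reasonable supplementary commentary, but the core argument is identical.
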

	\begin{proof}
		Both limits follow directly from the mean value theorem. 
	\end{proof}
	\begin{Lemma}
		\label{Lemmal2convergence}
		For any $1 \leq i,j \leq n$, the sequence $\left\{\widehat{W}_{ij}\brac{\floor{dx}} \colon d \in \N
		\right\}$ is Cauchy in $L^2 \left( \Omega \right)$.  
	\end{Lemma}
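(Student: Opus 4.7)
The plan is to represent each $\widehat{W}_{ij}(\floor{dx})$ as an element of the second Wiener chaos of $X$ and then verify the Cauchy property at the level of the associated symmetric kernels in $\mathfrak{H}^{\odot 2}$. Letting $\widehat{e}^i_k\in\mathfrak{H}$ be the element with $\widehat{Y}^i_k = Z(\widehat{e}^i_k)$, in analogy with \eqref{representationofwij}--\eqref{definitionoffij} one obtains
\begin{equation*}
\widehat{W}_{ij}(\floor{dx}) = I_2\brac{\widehat{f}_{ij,d}},\qquad \widehat{f}_{ij,d}=\frac{d^{1-\alpha}}{2}\sum_{k=1}^{\floor{dx}}\brac{\widehat{e}^i_k\otimes\widehat{e}^j_k+\widehat{e}^j_k\otimes\widehat{e}^i_k},
\end{equation*}
with the $\mathds{1}_{\{i=j\}}$ subtraction in the definition of $\widehat{W}_{ij}$ playing the role of removing the zeroth-chaos contribution when $i=j$. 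By the isometry $\E{I_2(f)I_2(g)}=2\inner{f,g}_{\mathfrak{H}^{\otimes 2}}$ on symmetric kernels, showing that $\{\widehat{W}_{ij}(\floor{dx})\}_{d\in\N}$ is Cauchy in $L^2(\Omega)$ reduces to showing that both $\E{\widehat{W}_{ij}(\floor{dx})^2}$ and the cross term $\E{\widehat{W}_{ij}(\floor{dx})\widehat{W}_{ij}(\floor{d'x})}$ converge to a common limit $L$ as $d,d'\to\infty$ independently, since expanding the square then gives
\begin{equation*}
\E{\brac{\widehat{W}_{ij}(\floor{dx})-\widehat{W}_{ij}(\floor{d'x})}^2}=\E{\widehat{W}_{ij}(\floor{dx})^2}-2\E{\widehat{W}_{ij}(\floor{dx})\widehat{W}_{ij}(\floor{d'x})}+\E{\widehat{W}_{ij}(\floor{d'x})^2}\longrightarrow L-2L+L=0.
\end{equation*}

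The cross term can be computed by following verbatim the same steps as in the proof of Theorem \ref{theoremrosenblatt} for the variance: apply the isometry, invoke \cite[Lemma 3.1]{HN} to replace each normalization factor $\norm{X^i_{(k+1)/d}-X^i_{k/d}}_{L^2(\Omega)}^{-2}$ by its asymptotic equivalent $(2\lambda)^{-1}(k/d)^{\alpha-2\beta}d^{\alpha}(1+o(k^{\alpha-2}))$, and then use the integral representation of Lemma \ref{covarianceLemmarosenblatt}. Up to an additive error that vanishes as $d,d'\to\infty$, this rewrites the cross term as a double Riemann-type sum of the form $d\,d'\sum_{k,j}g(k/d,j/d')\brac{\iint_{\text{cell}}\partial_{s,t}\E{X_sX_t}\,ds\,dt}^2$ to which Lemma \ref{mvtconsequence} applies, producing the common limit
\begin{equation*}
L=\frac{1}{4\lambda^2}\iint (st)^{\alpha-2\beta}\brac{\partial_{s,t}\E{X_sX_t}}^2ds\,dt
\end{equation*}
over the appropriate rectangle, irrespective of whether $d=d'$ or not. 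The same computation with $d=d'$ gives $\lim_d\E{\widehat{W}_{ij}(\floor{dx})^2}=L$.

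The principal obstacle is controlling the singularity of $\partial_{s,t}\E{X_sX_t}$ on the diagonal $s=t$: by Lemma \ref{covarianceLemmarosenblatt} one only has $\abs{\partial_{s,t}\E{X_sX_t}}\leq C(s\wedge t)^{2\beta-\alpha}\abs{s-t}^{\alpha-2}$ near $s=t$, so the summands in the double Riemann sum blow up as $j/d'\to k/d$. The hypothesis $\frac{3}{2}<\alpha<2$ is crucial here: it produces $2(\alpha-2)\in(-1,0)$, making $\abs{s-t}^{2(\alpha-2)}$ integrable on the rectangle and ensuring $L<\infty$. To pass to the Riemann-sum limit uniformly in $(d,d')$, I would split the $(k,j)$-double sum into an off-diagonal piece $\abs{k/d-j/d'}>\varepsilon$, where the integrand is bounded and Lemma \ref{mvtconsequence} applies directly, and a near-diagonal piece whose total contribution is bounded by $C\varepsilon^{2\alpha-3}$ uniformly in $d,d'$ via the singularity estimate above together with its companion estimate in the regime $\frac{s\wedge t}{s\vee t}\leq \frac{1}{2}$. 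Sending first $d,d'\to\infty$ and then $\varepsilon\to 0$ delivers the common limit $L$ for both the variance and the cross term, completing the Cauchy argument.
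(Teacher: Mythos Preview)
Your proposal is correct and follows essentially the same route as the paper: reduce the Cauchy property to showing that $\E{\widehat{W}_{ij}(\floor{dx})\widehat{W}_{ij}(\floor{d'x})}$ converges to a common finite limit $L=\frac{1}{4\lambda^2}\iint (st)^{\alpha-2\beta}(\partial_{s,t}\E{X_sX_t})^2\,ds\,dt$, using \cite[Lemma 3.1]{HN} for the normalizations, Lemma \ref{covarianceLemmarosenblatt} for the integral representation and the singularity bounds, and the key fact that $2(\alpha-2)>-1$ for integrability. The only cosmetic difference is that the paper writes down the Riemann limit directly and then verifies $L<\infty$ by splitting the \emph{limiting integral} into the regions $\frac{s}{t}\leq\frac{1}{2}$ and $\frac{1}{2}<\frac{s}{t}\leq 1$, whereas you propose an $\varepsilon$-splitting of the \emph{Riemann sum} into near-diagonal and off-diagonal parts to justify the passage to the limit; both arrive at the same $\int_0^x t^{2\alpha-3}\,dt$ bound.
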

	\begin{proof}
		A sequence $\left\{ a_n \colon n\in \N \right\}$ in a Hilbert space
		$K$ is Cauchy in $K$ if and only if $\inner{a_n,a_m}_K \to C$ as $n,m
		\to \infty$, for some constant $C$ as
		\begin{align*}
			\norm{a_m-a_n}^2_K=\inner{a_m,a_m}_K+\inner{a_n,a_n}_K-2\inner{a_m,a_n}_K.
		\end{align*}
		Based on this observation, we only need to show that 
		\begin{align*}
			I=\lim_{d,p\to \infty}\E{\widehat{W}_{ij}\brac{\floor{dx}}\widehat{W}_{ij}(\floor{px})}<\infty
		\end{align*}
		for any $1 \leq i,j \leq n$. Thus, we use the first part of Lemma \ref{covarianceLemmarosenblatt} and \cite[Lemma 3.1]{HN} to write 
		\begin{align*}
			I&=\lim_{d,p\to \infty}\sum_{\substack{1\leq k\leq \floor{dx}\\
					1\leq j\leq \floor{px}-1}}\frac{\E{\brac{X_{\frac{k+1}{d}}-X_{\frac{k}{d}}}\brac{X_{\frac{l+1}{p}}-X_{\frac{l}{p}}} }^2}{\E{\brac{X_{\frac{k+1}{d}}-X_{\frac{k}{d}}}^2}\E{\brac{X_{\frac{l+1}{p}}-X_{\frac{l}{p}}}^2}}
			\\&= \frac{1}{4\lambda^2}\lim_{d,p\to \infty}\sum_{\substack{1\leq k\leq
					\floor{dx}\\ 1\leq j\leq \floor{px}-1}}dp \left(\frac{k}{d}\right)^{\alpha-2\beta}\left(\frac{j}{p}\right)^{\alpha-2\beta}\brac{\int_{\frac{j}{p}}^{\frac{j+1}{p}}\int_{\frac{k}{d}}^{\frac{k+1}{d}}\partial_{s,t}\E{X_s X_t}dsdt}^2\\
			&= \frac{1}{4\lambda^2}\int_0^x \int_0^x (st)^{\alpha-2\beta}\brac{\partial_{s,t}\E{X_s X_t}}^2dsdt\\
			&= \frac{1}{2\lambda^2}\int_0^x \int_0^t (st)^{\alpha-2\beta}\brac{\partial_{s,t}\E{X_s X_t}}^2dsdt\\
			&= \frac{1}{2\lambda^2}\int_0^x \int_0^\frac{t}{2} (st)^{\alpha-2\beta}\brac{\partial_{s,t}\E{X_s X_t}}^2dsdt+\frac{1}{2\lambda^2}\int_0^x \int_\frac{t}{2}^t (st)^{\alpha-2\beta}\brac{\partial_{s,t}\E{X_s X_t}}^2dsdt\\
			&= I_1+I_2.
		\end{align*}
		To handle $I_1$, we can use the second part of Lemma \ref{covarianceLemmarosenblatt},
		which implies
		\begin{align*}
			I_1 &\leq  C\int_0^x \int_0^\frac{t}{2} (st)^{\alpha-2\beta}\brac{\brac{s\wedge t}^{2\beta-\alpha}(s\vee t)^{\alpha-2}}^2dsdt\\
			&= C \int_0^x \int_0^\frac{t}{2} (st)^{\alpha-2\beta}\brac{s^{2\beta-\alpha}t^{\alpha-2}}^2dsdt\\
			&= C\int_0^x \int_0^\frac{t}{2} s^{2\beta-\alpha}t^{(\alpha-2\beta)+2(\alpha-2)}dsdt\\
			&= C\int_0^x t^{2\alpha-3}dsdt,
		\end{align*}
		which converges for $\alpha>\frac{3}{2}$. To deal with $I_2$, we
		appeal to Lemma \ref{covarianceLemmarosenblatt} once more to get
		\begin{align*}
			I_2 &= C\int_0^x \int_\frac{t}{2}^t (st)^{\alpha-2\beta}\brac{\partial_{s,t}\E{X_s X_t}}^2dsdt\\
			&\leq C\int_0^x \int_\frac{t}{2}^t (st)^{\alpha-2\beta}\brac{s^{2\beta-\alpha}(t-s)^{\alpha-2}}^2 dsdt\\
			&\leq C\int_0^x \int_\frac{t}{2}^t (t-s)^{2(\alpha-2)} dsdt\\
			&= C\int_0^x t^{2\alpha-3}dt,
		\end{align*}
		which is finite as well.
	\end{proof}

	\begin{acknow*}
		The authors would like to thank Guangqu Zheng for helpful and
		enjoyable discussions. The authors are also grateful
                to two anonymous referees for their insightful
                comments and remarks.
	\end{acknow*}

\end{document}